\DeclareSymbolFont{largesymbols}{OMX}{cmex}{m}{n}
\def\Ddots{\mathinner{\mkern1mu\raise\p@
\vbox{\kern7\p@\hbox{.}}\mkern2mu
\raise4\p@\hbox{.}\mkern2mu\raise7\p@\hbox{.}\mkern1mu}}
\newtheorem*{thA}{Theorem A}
\newtheorem*{thB}{Theorem B}
\newtheorem*{thC}{Theorem C}
\newtheorem*{thD}{Theorem D}
\def\XXint#1#2#3{{\setbox0=\hbox{$#1{#2#3}{\int}$}
\vcenter{\hbox{$#2#3$}}\kern-.5\wd0}}
\begin{document}

\newtheorem{definition}{Definition}
\newtheorem{theorem}[definition]{Theorem}
\newtheorem{proposition}[definition]{Proposition}
\newtheorem{conjecture}[definition]{Conjecture}
\def\theconjecture{\unskip}
\newtheorem{corollary}[definition]{Corollary}
\newtheorem{lemma}[definition]{Lemma}
\newtheorem{claim}[definition]{Claim}
\newtheorem{sublemma}[definition]{Sublemma}
\newtheorem{observation}[definition]{Observation}
\theoremstyle{definition}

\newtheorem{notation}[definition]{Notation}
\newtheorem{remark}[definition]{Remark}
\newtheorem{question}[definition]{Question}

\newtheorem{example}[definition]{Example}
\newtheorem{problem}[definition]{Problem}
\newtheorem{exercise}[definition]{Exercise}
 \newtheorem{thm}{Theorem}
 \newtheorem{cor}[thm]{Corollary}
 \newtheorem{lem}{Lemma}[section]
 \newtheorem{prop}[thm]{Proposition}
 \theoremstyle{definition}
 \newtheorem{dfn}[thm]{Definition}
 \theoremstyle{remark}
 \newtheorem{rem}{Remark}
 \newtheorem{ex}{Example}
 \numberwithin{equation}{section}
\def\C{\mathbb{C}}
\def\R{\mathbb{R}}
\def\Rn{{\mathbb{R}^n}}
\def\Rns{{\mathbb{R}^{n+1}}}
\def\Sn{{{S}^{n-1}}}
\def\M{\mathbb{M}}
\def\N{\mathbb{N}}
\def\Q{{\mathbb{Q}}}
\def\Z{\mathbb{Z}}
\def\X{\mathbb{X}}
\def\Y{\mathbb{Y}}
\def\F{\mathcal{F}}
\def\L{\mathcal{L}}
\def\S{\mathcal{S}}
\def\supp{\operatorname{supp}}
\def\essi{\operatornamewithlimits{ess\,inf}}
\def\esss{\operatornamewithlimits{ess\,sup}}

\numberwithin{equation}{section}
\numberwithin{thm}{section}
\numberwithin{definition}{section}
\numberwithin{equation}{section}

\def\earrow{{\mathbf e}}
\def\rarrow{{\mathbf r}}
\def\uarrow{{\mathbf u}}
\def\varrow{{\mathbf V}}
\def\tpar{T_{\rm par}}
\def\apar{A_{\rm par}}

\def\reals{{\mathbb R}}
\def\torus{{\mathbb T}}
\def\t{{\mathcal T}}
\def\heis{{\mathbb H}}
\def\integers{{\mathbb Z}}
\def\z{{\mathbb Z}}
\def\naturals{{\mathbb N}}
\def\complex{{\mathbb C}\/}
\def\distance{\operatorname{distance}\,}
\def\support{\operatorname{support}\,}
\def\dist{\operatorname{dist}\,}
\def\Span{\operatorname{span}\,}
\def\degree{\operatorname{degree}\,}
\def\kernel{\operatorname{kernel}\,}
\def\dim{\operatorname{dim}\,}
\def\codim{\operatorname{codim}}
\def\trace{\operatorname{trace\,}}
\def\Span{\operatorname{span}\,}
\def\dimension{\operatorname{dimension}\,}
\def\codimension{\operatorname{codimension}\,}
\def\nullspace{\scriptk}
\def\kernel{\operatorname{Ker}}
\def\ZZ{ {\mathbb Z} }
\def\p{\partial}
\def\rp{{ ^{-1} }}
\def\Re{\operatorname{Re\,} }
\def\Im{\operatorname{Im\,} }
\def\ov{\overline}
\def\eps{\varepsilon}
\def\lt{L^2}
\def\diver{\operatorname{div}}
\def\curl{\operatorname{curl}}
\def\etta{\eta}
\newcommand{\norm}[1]{ \|  #1 \|}
\def\expect{\mathbb E}
\def\bull{$\bullet$\ }

\def\blue{\color{blue}}
\def\red{\color{red}}

\def\xone{x_1}
\def\xtwo{x_2}
\def\xq{x_2+x_1^2}
\newcommand{\abr}[1]{ \langle  #1 \rangle}

\newcommand{\Norm}[1]{ \left\|  #1 \right\| }
\newcommand{\set}[1]{ \left\{ #1 \right\} }
\newcommand{\ifou}{\raisebox{-1ex}{$\check{}$}}
\def\one{\mathbf 1}
\def\whole{\mathbf V}
\newcommand{\modulo}[2]{[#1]_{#2}}
\def \essinf{\mathop{\rm essinf}}
\def\scriptf{{\mathcal F}}
\def\scriptg{{\mathcal G}}
\def\m{{\mathcal M}}
\def\scriptb{{\mathcal B}}
\def\scriptc{{\mathcal C}}
\def\scriptt{{\mathcal T}}
\def\scripti{{\mathcal I}}
\def\scripte{{\mathcal E}}
\def\V{{\mathcal V}}
\def\scriptw{{\mathcal W}}
\def\scriptu{{\mathcal U}}
\def\scriptS{{\mathcal S}}
\def\scripta{{\mathcal A}}
\def\scriptr{{\mathcal R}}
\def\scripto{{\mathcal O}}
\def\scripth{{\mathcal H}}
\def\scriptd{{\mathcal D}}
\def\scriptl{{\mathcal L}}
\def\scriptn{{\mathcal N}}
\def\scriptp{{\mathcal P}}
\def\scriptk{{\mathcal K}}
\def\frakv{{\mathfrak V}}
\def\C{\mathbb{C}}
\def\D{\mathcal{D}}
\def\R{\mathbb{R}}
\def\Rn{{\mathbb{R}^n}}
\def\rn{{\mathbb{R}^n}}
\def\Rm{{\mathbb{R}^{2n}}}
\def\r2n{{\mathbb{R}^{2n}}}
\def\Sn{{{S}^{n-1}}}
\def\bbM{\mathbb{M}}
\def\N{\mathbb{N}}
\def\Q{{\mathcal{Q}}}
\def\Z{\mathbb{Z}}
\def\F{\mathcal{F}}
\def\L{\mathcal{L}}
\def\G{\mathscr{G}}
\def\ch{\operatorname{ch}}
\def\supp{\operatorname{supp}}
\def\dist{\operatorname{dist}}
\def\essi{\operatornamewithlimits{ess\,inf}}
\def\esss{\operatornamewithlimits{ess\,sup}}
\def\dis{\displaystyle}
\def\dsum{\displaystyle\sum}
\def\dint{\displaystyle\int}
\def\dfrac{\displaystyle\frac}
\def\dsup{\displaystyle\sup}
\def\dlim{\displaystyle\lim}
\def\bom{\Omega}
\def\om{\omega}

\author[Y. Li]{Yuru Li}
\address{Yuru Li:
School of Mathematical Sciences \\
Beijing Normal University \\
Laboratory of Mathematics and Complex Systems \\
Ministry of Education \\
Beijing 100875 \\
People's Republic of China}
\email{yrli@mail.bnu.edu.cn}

\author[J. Tan]{Jiawei Tan}
\address{Jiawei Tan:
School of Mathematics and Computing Science, Guilin University of Electronic Technology \\
Center for Applied Mathematics of Guangxi (GUET) \\
Guangxi Colleges and Universities Key Laboratory of Data Analysis and Computation \\
Guilin 541004 \\
People's Republic of China}
\email{jwtan@guet.edu.cn}

\author[Q. Xue]{Qingying Xue$^{*}$}
\address{Qingying Xue:
	School of Mathematical Sciences \\
	Beijing Normal University \\
	Laboratory of Mathematics and Complex Systems \\
	Ministry of Education \\
	Beijing 100875 \\
	People's Republic of China}
\email{qyxue@bnu.edu.cn}

\keywords{Fefferman-Stein inequality, Coifman-Fefferman inequality, the general commutators, H\"{o}rmander's conditions of Young type.\\
\indent{{\it {2020 Mathematics Subject Classification.}}} Primary 42B20,
Secondary 42B25.}

\thanks{ The second author is partially supported by the Science and Technology Project of Guangxi (Guike AD23023002). The third author is supported by the National Key Research and Development Program of China (No. 2020YFA0712900) and NNSF of China (No. 12271041).
\thanks{$^{*}$ Corresponding author, e-mail address: qyxue@bnu.edu.cn}}

\date{\today}
\title[Two inequalities for the general commutators]
{\bf Two inequalities for commutators of singular integral operators satisfying H\"{o}rmander conditions of Young type}

\begin{abstract}
In this paper, we systematically study  the Fefferman-Stein inequality and Coifman-Fefferman inequality for the general commutators of singular integral operators that satisfy H\"{o}rmander conditions of Young type. Specifically, we first establish the pointwise sparse domination for these operators. Then, relying on the dyadic analysis, the Fefferman-Stein inequality with respect to arbitrary weights and the quantitative weighted Coifman-Fefferman inequality are demonstrated. We decouple the relationship between the number of commutators and the index $\varepsilon$, which essentially improved the results of P\'{e}rez and Rivera-R\'{\i}os (Israel J. Math., 2017). As applications, it is shown that all the aforementioned results can be applied to a wide range of operators, such as singular integral operators satisfying the $L^r$-H\"{o}rmander operators, $\omega$-Calder\'{o}n-Zygmund operators with $\omega$ satisfying a Dini condition, Calder\'{o}n commutators, homogeneous singular integral operators and Fourier multipliers.

\end{abstract}\maketitle

\section{Introduction and main results}
The main purpose of this paper is to investigate the
weak type endpoint estimates with arbitrary weights and the quantitative weighted Coifman-Fefferman inequality for the general commutators of singular integral operators satisfying $L^A$-H\"{o}rmander conditions of non-convolution type.
\subsection{Definitions and notations} 
\
\newline
\indent The singular integral operator $T$ of convolution type with a kernel $K$, defined as
$$Tf(x)=p.v.\int_{\Rn}K(x-y)f(y)dy,$$ and bounded on $L^2$,
is widely recognized as a cornerstone of the classical Calder\'{o}n-Zygmund theory. 
The properties of $T$ are intimately tied to the smoothness of the kernel $K$, with any variation in the smoothness condition directly influencing the behavior of $T$. Consequently, investigating the properties of $T$ under different kernel conditions has become a significant area of research in Harmonic analysis. Below, we will review the development in this field based on the research progress under different kernel conditions.

In 1972, Coifman\cite{hor+coi7} showed that if the kernel $K$ satisfies the Lipschitz condition:
\begin{equation}\label{ie20}
  |K(x-y)-K(x)|\leq C\frac{|y|^{\alpha}}{|x|^{\alpha+n}},\,0<\alpha<1,\,|x|>2|y|,
\end{equation}
then the following Coifman type estimate holds,
\begin{equation}\label{ie5.30}
\int_{\mathbb{R}^n}|Tf(x)|^pw(x)dx\leq C\int_{\mathbb{R}^n}\big(Mf(x)\big)^pw(x)dx,\,\hbox{for any }0<p<\infty, \, w\in A_{\infty}.
\end{equation}
 Here $A_{\infty}$ denotes the classical Muckenhoupt $A_{\infty}$ class of weights, which will be introduced in Section \ref{sec2}.
 
The Lipschitz condition for the Coifman type estimate \eqref{ie5.30} can be weakened to a class of Dini condition \cite{kur}. Recall that the homogeneous kernel $K(x)=\Omega(x/|x|)|x|^{-n}$ is said to satisfy the Dini condition (denoted by ${\mathcal H}_{\rm Dini}$) if
$$\int_{0}^1w_{\infty}(t)\frac{dt}{t}<\infty,$$
where $w_{\infty}(t):=\sup\{|\Omega(\theta_1)-\Omega(\theta_2)|
:\ \theta_1,\theta_2\in \mathbb{S}^{n-1},\,|\theta_1-\theta_2|\leq t\}$.

To introduce more known results, we need to introduce the definition of $L^r$-H\"{o}rmander condition. 
\begin{definition}[\textbf{$L^r$-H\"{o}rmander condition}, \cite{kur}]\label{def50}
A function $K$ is called to satisfy $L^r$-H\"{o}rmander condition if there exist positive constants $c$ and $C$ such that for any $y\in \mathbb{R}^n$ and $R>c|y|$,
\begin{align*}
\sum_{m=1}^{\infty}(2^mR)^{\frac{n}{r'}}\Big(\int_{2^mR<|x|<2^{m+1}R}
      |K(x-y)-K(x)|^r dx\Big)^{\frac{1}{r}}&\leq C<\infty,\quad 1\leq r<\infty;\\
\sum_{m=1}^{\infty}(2^mR)^n\sup_{\{x:\ 2^mR<|x|\leq 2^{m+1}R\}}|K(x-y)-K(x)|&\leq C<\infty,\quad r=\infty.
\end{align*}
\end{definition}
Let ${\mathcal H}_r$ be the class of kernels satisfying the $L^r$-H\"{o}rmander condition, then  the following including relationship holds:
$${\mathcal H}_{\rm Dini}\subset {\mathcal H}_{\infty} \subset {\mathcal H}_r \subset {\mathcal H}_s \subset {\mathcal H}_1,\,\, 1<s<r<\infty.$$
It was Kurtz and Wheeden\cite{kur}, who first studied the weighted norm inequalities of the operator $T$ with $K$ satisfying $L^r$-H\"{o}rmander condition. Surprisingly, Martell, P\'erez and Trujillo \cite{hor+mar} showed that inequality \eqref{ie5.30} no longer holds if $K\in {\mathcal H}_r$ for $1\leq r<\infty$.

In 2005, Lorente et al. \cite{hor+lor1} introduced the following $L^A$-H\"{o}rmander condition by replacing the $L^r$ norm in Definition \ref{def50} with a more general Orlicz norm. Let $A$ be a Young function (see Section \ref{section2.5}). $K$ is said to satisfy the $L^A$-H\"{o}rmander condition if there exsit $c\geq1$ and $C>0$ such that for any $y\in \mathbb{R}^n$ and $R>c|y|$,
\begin{equation}\label{ie100}
\sum_{m=1}^{\infty}(2^mR)^n\left\|(K(\cdot-y)-K(\cdot))\chi_{2^mR<|x|\leq 2^{m+1}R}(\cdot)\right\|_{A,B(0,2^{m+1}R)}\leq C.
\end{equation}

When the kernel satisfies the $L^A$-H\"{o}rmander condition, the Coifman type estimates, the weighted weak type endpoint estimates and two-weight estimates were established in \cite{hor+lor3},\cite{hor+lor1} and \cite{hor+lor2}. More related works about $L^r$-H\"{o}rmander condition can be found in the references\cite{hor+rub,wat}. It is worth noting that the $L^r$-H\"{o}rmander condition also plays a crucial role in studying rough singular integral operators in the works of Watson\cite{wat}.

Consider now the singular integral operator $T$ of non-convolution type. It is known that many important operators in Harmonic analysis are of non-convolution type, such as Calder\'{o}n communators, non-standard singular integral operators, Coifman-Meyer type operators. Let $T$ be a bounded linear operator on $L^2$, and for a bounded function $f$ with compact support, we always assume that $T$ can be represented in the form that 
\begin{equation}\label{ie5.38}
T(f)(x)=\int_{\mathbb{R}^n}K(x,y)f(y)dy,\quad x\notin {\rm supp}~f,
\end{equation}
where the kernel $K:\mathbb{R}^n\times\mathbb{R}^n \backslash \{(x,x):x\in \mathbb{R}^n\}\rightarrow \mathbb{R}$ is a locally integrable function.
In 2008, Guo et al.\cite{hor+guo} introduced the following non-convolution type $L^r$-H\"{o}rmander kernel condition in studying of the Riesz transform associated with the Schr\"{o}dinger operators.
\begin{definition}[\cite{hor+guo}]
Let $K$ be a locally integrable function, we say that $K$ satisfies the $L^r$-H\"{o}rmander condition:\\
{\rm(1)} when $1\leq r<\infty$,
\begin{align*}
\sup_{Q}\sup_{x,z\in \frac{1}{2}Q}\sum_{k=1}^{\infty}|2^kQ|\Big(\frac{1}{|2^kQ|}\int_{2^kQ\backslash 2^{k-1}Q}|K(x,y)-K(z,y)|^rdy\Big)^{1/r}&<\infty,\\
\sup_{Q}\sup_{x,z\in \frac{1}{2}Q}\sum_{k=1}^{\infty}|2^kQ|\Big(\frac{1}{|2^kQ|}\int_{2^kQ\backslash 2^{k-1}Q}|K(y,x)-K(y,z)|^rdy\Big)^{1/r}&<\infty;
\end{align*}
{\rm(2)} when $r=\infty$,
\begin{align*}
\sup_{Q}\sup_{x,z\in \frac{1}{2}Q}\sum_{k=1}^{\infty}|2^kQ|\mathop{\mathrm{ess\,sup}}\limits_{y \in 2^kQ\backslash 2^{k-1}Q}|K(x,y)-K(z,y)|&<\infty,\\
\sup_{Q}\sup_{x,z\in \frac{1}{2}Q}\sum_{k=1}^{\infty}|2^kQ|\mathop{\mathrm{ess\,sup}}\limits_{y \in 2^kQ\backslash 2^{k-1}Q}|K(y,x)-K(y,z)|&<\infty.
\end{align*}
\end{definition}

In multilinear setting, it was Li\cite{li3} who showed that the operators with the multilinear $L^r$-H\"{o}rmander condition can be dominated pointwisely by a finite number of sparse operators and satisfy the quantitative weighted estimates. Recently, Cao and Yabuta \cite{cao1} obtained the quantitative weighted estimates for Littlewood-Paley square functions satisfying the multilinear $L^r$-H\"{o}rmander condition.

This paper focuses on the following non-convolution type $L^A$-H\"{o}rmander condition, which is an essential extension of the $L^A$-H\"{o}rmander condition \eqref{ie100}.
\begin{definition}[\textbf{$L^A$-H\"{o}rmander condition of non-convolution type}]\label{def5.2}
Given a Young function $A$, define
\begin{align*}
   & H_{K,A,1}=\sup_{Q}\sup_{x,z\in \frac{1}{2} Q}\sum_{k=1}^{\infty}|2^kQ|\left\|(K(x,\cdot)-K(z,\cdot))\chi_{2^k Q \backslash 2^{k-1} Q}\right\|_{A,2^k Q},  \\
   & H_{K,A,2}=\sup _Q \sup _{x, z \in \frac{1}{2} Q} \sum_{k=1}^{\infty}|2^kQ|\left\|(K(\cdot, x)-K(\cdot, z)) \chi_{2^k Q \backslash 2^{k-1} Q}\right\|_{A, 2^k Q},
\end{align*}
if $H_{K,A}=\max\left\{H_{K,A,1},H_{K,A,2}\right\}<\infty$, then we call $K$ satisfies $L^A$-H\"{o}rmander condition. 
\end{definition}
Let ${\mathcal H}_A$ be the class of kernels satisfying the $L^A$-H\"{o}rmander condition $($without ambiguity about the kernel function $K$, shorten $H_{K,A}$ to $H_{A}$$)$.
An operator $T$ for which the kernel $K\in {\mathcal H}_{A}$ satisfies \eqref{ie5.38} and the size condition $K(x,y)\leq C_K|x-y|^{-n},\,x\neq y$, is called an
$A$-H\"{o}rmander operator.

\subsection{The general commutators}
\
\newline
\indent The study of commutators of  Calder\'{o}n-Zygmund operators can be traced back to the celebrated work given by Coifman, Rochberg and Weiss\cite{coi6}, in which they defined the commutator $[b,T]$ by
$$[b,T]f(x)=bT(f)(x)-T(bf)(x),$$
where $T$ is the standard Calder\'{o}n-Zygmund operator and $b$ is a locally integrable function.
Although the initial interest in studying such operators was related to the generalization of the classical factorization theorem of Hardy spaces, many applications in other areas have also been discovered, especially in partial differential equations, as referenced in \cite{hor+faz,hor+gre}. It is worth noting that the weak $(1,1)$ estimate for $[b,T]$ does not hold. However, it still enjoys certain weak $L\log L$ type estimate (see \cite{per0}).

In \cite{hor+per12}, P\'{e}rez and Pradolini introduced the high order commutator of $T$ defined by
$$T^m_bf(x)=\big[b,T^{m-1}_b\big]f(x),\,T^1_{b}f(x)=[b,T]f(x),$$
and they established the weak type endpoint estimates for $T^m_{b}$ with arbitrary weights.

\begin{thA}[\cite{hor+per12}]\label{thmL}
Let $T$ be a Calder\'{o}n-Zygmund operator and $m\in \mathbb{N}$. Assume that $b\in \mathrm {BMO}$, then for every weight $w$ and $0<\varepsilon<1$,
\begin{equation}\label{ie5.32}
w\big(\{x\in \mathbb{R}^n:|T^m_{ b}f(x)|>\lambda\}\big)\leq C_{\varepsilon,T} \int_{\mathbb{R}^n}\Phi_m
\Big(\frac{\|b\|^m_{\mathrm{BMO}}|f(x)|}{\lambda}\Big)M_{L(\log L)^{m+\varepsilon}}w(x)dx
\end{equation}
holds for all $\lambda>0$, where $\Phi_m(t)=t\log^m(e+t)$.
\end{thA}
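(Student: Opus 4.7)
The plan is to proceed via the modern sparse-domination route, which cleanly decouples the commutator expansion from the endpoint weighted estimate. Step one is a pointwise sparse bound: there exist finitely many sparse families $\mathcal{S}_1,\ldots,\mathcal{S}_N$ such that, for a.e.\ $x$,
\begin{equation*}
|T^m_b f(x)|\lesssim \sum_{t=1}^N\sum_{k=0}^m\binom{m}{k}\sum_{Q\in\mathcal{S}_t}|b(x)-b_Q|^{m-k}\langle|(b-b_Q)^k f|\rangle_Q\chi_Q(x).
\end{equation*}
This comes from the algebraic identity $T^m_b f(x)=\sum_{k=0}^m(-1)^k\binom{m}{k}(b(x)-\lambda)^{m-k}T((b-\lambda)^k f)(x)$ applied with $\lambda=b_Q$ on each stopping cube, combined with a Lerner-type sparse decomposition of $T$ itself. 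After this reduction I only need to estimate one generic sparse form
\begin{equation*}
\mathcal{A}_{\mathcal{S},b,m}f(x)=\sum_{k=0}^m\binom{m}{k}\sum_{Q\in\mathcal{S}}|b(x)-b_Q|^{m-k}\langle|(b-b_Q)^k f|\rangle_Q\chi_Q(x).
\end{equation*}

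Step two is the endpoint estimate for $\mathcal{A}_{\mathcal{S},b,m}$. Normalizing $\|b\|_{\mathrm{BMO}}=1$ and $\lambda=1$, I would dualize by testing against $g\chi_E$ with $E=\{|\mathcal{A}_{\mathcal{S},b,m}f|>1\}$ and apply the generalized H\"older inequality in Orlicz spaces cube by cube:
\begin{equation*}
\langle|(b-b_Q)^k f|\rangle_Q\lesssim \|b-b_Q\|_{\exp L,Q}^k\,\|f\|_{L(\log L)^k,Q}\lesssim \|f\|_{L(\log L)^k,Q},
\end{equation*}
invoking John-Nirenberg in the last step; similarly $|b(x)-b_Q|^{m-k}$ for $x\in Q$ is absorbed through $\exp L$ estimates. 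Combined with the sparse disjointness property $|E_Q|\geq\tfrac12|Q|$, this collapses the sum to one controlled essentially by the Orlicz maximal function $M_{L(\log L)^m}f$ paired with $w$.

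Step three is to close the argument through a Fefferman-Stein inequality for the iterated Orlicz maximal function:
\begin{equation*}
\int \Phi_m\bigl(M_{L(\log L)^m}f\bigr)\,w\,dx\lesssim \int \Phi_m(|f|)\,M_{L(\log L)^{m+\varepsilon}}w\,dx,\qquad \varepsilon>0,
\end{equation*}
after which the weak-$L^1(w)$ conclusion follows by Chebyshev's inequality applied to $\Phi_m$. The main obstacle lies in Step two: one must precisely track how the commutator structure forces the Young function $\Phi_m(t)=t\log^m(e+t)$ on the function side, so that the $m$-fold nesting of John-Nirenberg matches on both sides of the generalized H\"older pairing without dissipating the $\|b\|^m_{\mathrm{BMO}}$ factor. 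The arbitrarily small $\varepsilon$ in $M_{L(\log L)^{m+\varepsilon}}w$ is then unavoidable, since the iterated $L(\log L)^m$-maximal operator cannot be ``inverted'' sharply against an arbitrary weight without absorbing an extra logarithmic factor.
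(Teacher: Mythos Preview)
Your Step~1 agrees with the paper's own route: Theorem~\ref{thm5.1} is exactly that pointwise sparse domination (in the slightly more general $T_{\vec b}$ form), and the binomial expansion you write is the special case $b_i\equiv b$. So far so good.

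The gap is in Steps~2--3. Your Step~3 inequality
\[
\int_{\mathbb{R}^n} \Phi_m\bigl(M_{L(\log L)^m}f\bigr)\,w\,dx \;\lesssim\; \int_{\mathbb{R}^n} \Phi_m(|f|)\,M_{L(\log L)^{m+\varepsilon}}w\,dx
\]
is a \emph{strong-type} modular two-weight bound for arbitrary $w$, and it is false already for $m=0$, $w\equiv 1$: the left side is $\|Mf\|_{L^1}=\infty$ whenever $f\not\equiv 0$. So you cannot close via Chebyshev on $\Phi_m$ after that. More importantly, your Step~2 never produces a pointwise or integral control by $M_{L(\log L)^m}f$: the terms carrying $|b(x)-b_Q|^{m-k}$ with $k<m$ are not dominated by any maximal function of $f$ alone, and ``absorbed through $\exp L$ estimates'' does not say what replaces them. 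Cube-by-cube H\"older gives
\[
\int_Q |b-b_Q|^{m-k}\,w \;\lesssim\; \|w\|_{L(\log L)^{m-k},Q}\,|Q|,
\]
but then you are left with the bilinear sum $\sum_{Q\in\mathcal{S}}\|f\|_{L(\log L)^k,Q}\,\|w\|_{L(\log L)^{m-k},Q}\,|Q|$, which can be infinite and is not the right object; sparseness alone does not collapse it to the target integral.

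What the paper actually does in Section~\ref{sec4} is a genuinely different mechanism. After restricting to the level set $E=\{\mathcal{A}>c,\,M_{A}f\le 1/4\}$, it stratifies the sparse family into layers $\mathcal{S}_{k,A}=\{Q:\ 4^{-k-1}<\|f\|_{A,Q}\le 4^{-k}\}$ and uses Lemma~\ref{lem5.3} (from \cite{ler5}), which bounds $\int_E\bigl(\sum_{Q\in\mathcal{S}_{k,A}}\chi_Q\bigr)w$ directly in terms of $w(E)$ plus $\int A(4^k|f|)\,M_\varphi w$. For the terms with outer factors $|b(x)-b_Q|^{j}$, it introduces the sublevel sets $\mathcal{H}_k(Q)=\{x\in Q:|b(x)-b_Q|>r^k\}$, applies John--Nirenberg to get $|\mathcal{H}_k(Q)|\le\alpha_k|Q|$ with $\alpha_k$ exponentially small, and then uses a second H\"older splitting $\|w\chi_{\mathcal{H}_k(Q)}\|_{L(\log L)^j,Q}\lesssim \Psi^{-1}(1/\alpha_k)^{-1}\|w\|_{\Phi_j\circ\varphi,Q}$. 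The resulting geometric series in $k$ is what produces $M_{L(\log L)^{m+\varepsilon}}w$ with the correct $\varepsilon$-dependence. None of this structure is visible in your sketch; in particular, the level-set stratification and Lemma~\ref{lem5.3} are the missing ideas that make the weak-type bound go through.
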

In 2002, P\'{e}rez and Trujillo-Gonz\'{a}lez\cite{hor+per11} first defined the general commutator as follows
\begin{equation}\label{ie5.31}
T_{\vec b}f(x)=\int_{\mathbb{R}^n}\prod_{j=1}^m(b_j(x)-b_j(y))K(x,y)f(y)dy,
\end{equation}
where $\vec{b}=(b_1,\ldots,b_m)$ is a finite family of locally integrable functions. It is worth mentioning that weighted strong as well as weak type endpoint estimates for $T_{\vec b}$ were given in \cite{hor+per11}. 
\begin{remark}
Note that when $m=1$, $T_{\vec{b}}$ coincides with the classical linear commutator $[b,T]$; If $b_i=b,\,i=1,\ldots,m$, then $T_{\vec{b}}$ exactly is $m$ order iterated commutators $T^m_{b}$; Assume that $\vec{\alpha}=(\alpha_1,\ldots,\alpha_m)\in \mathbb{N}^m$, if we take $b_1=\cdots=b_{\alpha_1},\ldots,b_m=\cdots=b_{\alpha_m}$, in this case, $$T_{\vec{b}}f(x)=T_{\vec{b},\vec{\alpha}}f(x):=\int_{\mathbb{R}^n}\prod_{j=1}^m
(b_j(x)-b_j(y))^{\alpha_j}f(y)dy.$$
Recently, using extrapolation techniques, the weighted boundedness results for $T_{\vec{b},\vec{\alpha}}$ have been obtained in \cite{beny}.
\end{remark}
\subsection{Historical background}
\
\newline
\indent In the weighted theory, the Fefferman-Stein inequality and Coifman-Fefferman inequality have attracted the attention of many researchers. Now we briefly review the background of these two inequalities.
 
{\bf $\bullet$ Fefferman-Stein inequality.} The origin of the Fefferman-Stein inequality can be traced back to the celebrated work of Fefferman and Stein\cite{fef1}. They obtained the following endpoint estimate for the classical Hardy-Littlewood maximal operator $M$,
\begin{equation}\label{ie5.33}
w\big(\{x\in \mathbb{R}^n:|Mf(x)|>\lambda\}\big)\leq C_n \int_{\mathbb{R}^n}\frac{|f(x)|}{\lambda}Mw(x)dx,
\end{equation}
where $w$ is an arbitrary weight and the constant $C_n$ only depends on the dimension $n$. Later on, Muckenhoupt and Wheeden conjectured that a similar estimate holds for any Calder\'{o}n-Zygmund singular integral operator $T$, namely, there exists a constant $C$ such that for any weight $w$ and $\lambda>0$,
\begin{equation}\label{ie5.34}
w\big(\{x\in \mathbb{R}^n:|Tf(x)|>\lambda\}\big)\leq C \int_{\mathbb{R}^n}\frac{|f(x)|}{\lambda}Mw(x)dx,
\end{equation}
which is known as the {\bf Muckenhoupt-Wheeden conjecture}.

Recall that $w$ satisfies Muckenhoupt $A_1$ condition (or belongs to the class $A_1$) if there exists $c>0$ such that $Mw(x)\leq [w]_{A_1}w(x),\,\text{a.e.} \ x \in \mathbb{R}^n$, then \eqref{ie5.34} implies that  for any $w\in A_1$,
\begin{equation}\label{ie5.35}
w\big(\{x\in \mathbb{R}^n:|Tf(x)|>\lambda\}\big)\leq C[w]_{A_1}\int_{\mathbb{R}^n}\frac{|f(x)|}{\lambda}w(x)dx,
\end{equation}
which is referred to as the {\bf weak conjecture of Muckenhoupt and Wheeden}. 

In recent decades, Numerous significant investigations have been made in these field from different perspective.
It was Chanillo and Wheeden\cite{cha1} who verified that the inequality \eqref{ie5.34} holds when $T$ is a square function; Later on, \eqref{ie5.34} was shown to be true by Buckley\cite{buc} if $w_{\delta}(x)=|x|^{-n(1-\delta)},\,0<\delta<1$; However, in 2011, Reguera\cite{hor+reg1} demonstrated that the 
dyadic version of the Muckenhoupt-Wheeden conjecture is not valid for the Haar multiplier. Building upon this, Reguera and Thiele \cite{reg} proved that \eqref{ie5.34} does not hold for the Hilbert transform, which disproves the Muckenhoupt-Wheeden conjecture. It was not until 2015 that Nazarov et al.\cite{hor+naz1} constructed a “smooth” weight function for the Hilbert transform and proved that \eqref{ie5.35} is not valid, consequently refuting the weaker version of the Muckenhoupt-Wheeden conjecture. Further studies on the Muckenhoupt-Wheeden conjecture can be found in  \cite{hor+cri,hor+ler19} and the references therein.

Based on the above extensive studies of Muckenhoupt-Wheeden conjecture, people began to pay attention to the deformation of Fefferman-Stein inequality and its dependence on the weight constant, as follows:
\begin{enumerate}
\item[{$\bullet$}] If $M$ in \eqref{ie5.34} is replaced by a slightly larger operator, does \eqref{ie5.34} hold?

\item[{$\bullet$}]  What is the dependence between the constant in \eqref{ie5.35} and the $A_1$ weight constant?
\end{enumerate}
An affirmative answer to the first questionwas given by P\'{e}rez. In \cite{per7}, he proved that if the Hardy-Littlewood maximal operator $M$ on right-hand side is replaced by
$M^2$ ($M^2=M\circ M$) or even by the operator
$M_{L(\log L)^{\varepsilon}}$ with any $\varepsilon>0$, then \eqref{ie5.34} remains valid, that is, 
$$w\big(\{x\in \mathbb{R}^n:|Tf(x)|>\lambda\}\big)\leq C \int_{\mathbb{R}^n}\frac{|f(x)|}{\lambda}M^2w(x)dx.$$
For the second question,
Lerner, Ombrosi and P\'{e}rez\cite{hor+ler} based on a particular case of the Coifman type estimate, proved that for any $w\in A_1$, 
$$w\big(\{x\in \mathbb{R}^n:|Tf(x)|>\lambda\}\big)\leq C[w]_{A_1}\log(e+[w]_{A_1})\int_{\mathbb{R}^n}\frac{|f(x)|}{\lambda}w(x)dx. $$
It was shown by Lerner et al.\cite{hor+ler17} that the logarithm type dependence on the constants of the weight is necessary. 

As mentioned above, the commutators exhibit more singularity than the operators themselves, so the Fefferman-Stein inequality with respect to commutators naturally become a subject worthy of intensive study. After P\'{e}rez and Pradolini \cite{hor+per12} obtained \eqref{ie5.32} in 2001, P\'{e}rez and Rivera-R\'{\i}os \cite{per9} further considered the weak type endpoint estimates of multilinear commutators $T_{\vec b}$, the results are as follows:
\begin{thB}[\cite{per9}]\label{thmM}
Let $T$ be a Calder\'{o}n-Zygmund operator and $m\in \mathbb{N}$. Assume that $\vec{b}=(b_1,\ldots,b_m)\in \mathrm{BMO}^m$, then for every weight $w$ and $0<\varepsilon<1$,
\begin{equation}\label{ie5.36}
w\big(\{x\in \mathbb{R}^n:|T_{\vec b}f(x)|>\lambda\}\big)\lesssim \frac{1}{\varepsilon^{m+1}}\int_{\mathbb{R}^n}\Phi_m
\Big(\frac{\|\vec{b}\|_{\mathrm{BMO}}|f(x)|}{\lambda}\Big)M_{L(\log L)^{m+\varepsilon}}w(x)dx,
\end{equation}
where $\Phi_m(t)=t\log^m(e+t)$ and $\big\|\vec{b}\big\|_{\mathrm{BMO}}=\prod_{i=1}^m\|b_i\|_{\mathrm{BMO}}$.
\end{thB}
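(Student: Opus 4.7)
The plan is to reduce the weak-type estimate for $T_{\vec b}$ to a pointwise sparse domination, and then analyse the resulting sparse form via Orlicz duality and the John--Nirenberg inequality. By homogeneity I may assume $\|\vec b\|_{\mathrm{BMO}}=1$ and $\lambda=1$, so the target becomes
$$w\big(\{x\in\mathbb{R}^n : |T_{\vec b}f(x)|>1\}\big) \lesssim \varepsilon^{-(m+1)}\int_{\mathbb{R}^n}\Phi_m(|f(x)|)\,M_{L(\log L)^{m+\varepsilon}}w(x)\,dx.$$

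The first step is a sparse bound. Using the telescoping identity
$$\prod_{j=1}^m(b_j(x)-b_j(y)) = \sum_{\sigma\subseteq\{1,\ldots,m\}}(-1)^{|\sigma^c|}\prod_{j\in\sigma}(b_j(x)-b_{j,S})\prod_{j\in\sigma^c}(b_j(y)-b_{j,S}),$$
with $b_{j,S}=\langle b_j\rangle_S$, inside each stopping cube $S$, together with Lerner's recursive principal-cube construction, one obtains a sparse family $\mathcal{S}$ such that
$$|T_{\vec b}f(x)| \lesssim \sum_{\sigma\subseteq\{1,\ldots,m\}}\sum_{S\in\mathcal{S}}\Big(\prod_{j\in\sigma}|b_j(x)-b_{j,S}|\Big)\Big\langle|f|\prod_{j\in\sigma^c}|b_j-b_{j,S}|\Big\rangle_S\chi_S(x).$$
This step requires only the $L^1\to L^{1,\infty}$ endpoint for the grand maximal truncation of $T$, which is standard for Calder\'{o}n--Zygmund operators.

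Next I would perform a Calder\'{o}n--Zygmund decomposition of $\Phi_m(|f|)$ at level $1$, producing maximal dyadic cubes $\{Q_k\}$. The local set $\bigcup_k 2Q_k$ is handled immediately by $\sum_k w(2Q_k)\lesssim \int\Phi_m(|f|)Mw$ and absorbed since $Mw\le M_{L(\log L)^{m+\varepsilon}}w$. Off this set, the weak-type norm of each sparse piece is estimated by a layer-cake testing argument combined with the generalized H\"{o}lder inequality
$$\langle fg_1\cdots g_l\rangle_S \le C\,\|f\|_{L(\log L)^l,S}\prod_i\|g_i\|_{\exp L,S}$$
and the John--Nirenberg bound $\|b_j-b_{j,S}\|_{\exp L,S}\lesssim 1$. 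Every sparse term then collapses to $\|f\|_{L(\log L)^m,S}$ times an $L(\log L)^{m+\varepsilon}$-type average of $w$, and a Carleson embedding sums these over $\mathcal{S}$ to yield the desired integral against $M_{L(\log L)^{m+\varepsilon}}w$.

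The delicate point, which I expect to be the main technical obstacle, is tracking the $\varepsilon$-dependence. Each enlargement of a Young function from $L(\log L)^k$ to $L(\log L)^{k+\varepsilon}$ costs a factor $\varepsilon^{-1}$ in the Orlicz duality pairing, while the telescoping identity splits the problem into $2^m$ pieces indexed by $\sigma$. A careful accounting shows that the $\sigma$-piece accrues a prefactor of order $\varepsilon^{-(|\sigma|+1)}$; the worst such piece, at $|\sigma|=m$, produces the advertised $\varepsilon^{-(m+1)}$ blow-up. The principal difficulty is to choose the auxiliary Young functions uniformly across $\sigma$ so that the John--Nirenberg factors are absorbed while preserving the target $L(\log L)^{m+\varepsilon}$ on the weight side, and this coupling between $m$ and $\varepsilon$ is precisely what the present paper subsequently decouples.
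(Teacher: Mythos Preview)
Theorem~B is a cited background result from \cite{per9}; the paper does not reprove it but instead establishes the strictly stronger Corollary~\ref{cor5.1} (with $\varepsilon^{-1}$ in place of $\varepsilon^{-(m+1)}$), from which Theorem~B follows trivially. So the relevant comparison is between your sketch and the paper's route to Corollary~\ref{cor5.1} via Theorems~\ref{thm5.1}--\ref{thm5.3}.

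Your first step, the sparse domination of $T_{\vec b}$ via the telescoping identity, matches the paper's Theorem~\ref{thm5.1} almost exactly. The divergence is in the second step. You propose a Calder\'on--Zygmund decomposition of $\Phi_m(|f|)$ followed by ``Carleson embedding sums these over $\mathcal{S}$.'' The paper does something quite different: it slices the sparse family into layers $\mathcal{S}_{k,A}=\{Q:4^{-k-1}<\|f\|_{A,Q}\le 4^{-k}\}$ and applies Lemma~\ref{lem5.3} (from \cite{ler5}) to each layer, then controls the level sets $\mathcal{H}^i_k(Q)=\{x\in Q:|b_i(x)-\langle b_i\rangle_Q|>(4/3)^k\}$ via John--Nirenberg. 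This layered approach is what allows the paper to isolate a \emph{single} factor of $\varepsilon^{-1}$ rather than compounding losses across the $b_i$'s.

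Your sketch has a genuine gap in the second step. Carleson embedding produces strong $L^p$ bounds, not weak-type level-set bounds; you have not explained how to pass from ``each sparse term collapses to $\|f\|_{L(\log L)^m,S}$ times an Orlicz average of $w$'' to a weak $(\Phi_m,\Phi_m)$ inequality for the full sparse sum. The paper's Lemma~\ref{lem5.3} is precisely the device that replaces Carleson embedding here: it bounds $\int_E\sum_{Q\in\mathcal{F}_k}\chi_Q\,w$ by $2^k w(E)$ plus an integral term, and the $2^k w(E)$ piece is what gets reabsorbed on the left. Your CZ/Carleson outline does not supply an analogous reabsorption mechanism. Moreover, your accounting ``the $\sigma$-piece accrues $\varepsilon^{-(|\sigma|+1)}$'' is asserted but not derived; in the paper the $\varepsilon$-loss enters once, through the choice $\varphi_{(l_1,l_2)}(t)=t\log^{m-l_1+l_2+\varepsilon}(e+t)$ and the resulting integral $\int_1^\infty\frac{dt}{t\log(e+t)\log^{1+\varepsilon}(e+\log(e+t))}\lesssim\varepsilon^{-1}$, not through iterated Orlicz duality.
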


It is observed that the right-hand side of \eqref{ie5.36} has an exact estimate with a constant $\varepsilon$. In particular, when the weight function $w$ belongs to $A_{\infty}$ or $A_{1}$,  utilizing the reverse H\"{o}lder inequality immediately yields a quantitative
weighted estimates for the commutators, which coincides with \cite[Theorem 2.9]{hor+ort1}. By examining \eqref{ie5.36}, we can also note that the exponent of $\varepsilon$ depends on the iteration number $m$ of the commutators and when $m=1$, the constant on the right-hand side is $\frac{1}{\varepsilon^2}$.
In 2017, Lerner, Ombrosi and Rivera-R\'{\i}os\cite{ler5} proved that the result still holds for $\frac{1}{\varepsilon}$ when $m=1$,  which improved \eqref{ie5.36}.
\vspace{0.1cm}

{\bf $\bullet$ Coifman-Fefferman inequality.} The inequality of the form \eqref{ie5.30} is called the Coifman-Fefferman inequality. The estimate can be viewed as a certain control of the Hardy-Littlewood maximal function $M$ with respect to the operator $T$, which implies that $M$ can be used to characterize the weighted boundedness of $T$ in some extent. One of the most usual techniques for proving such results is to establish a good-$\lambda$ inequality between $T$ and $M$, which leads to an unavailability of the constant $C$ with respect to the dependence of the weight $w$ or the parameter $p$ in the detailing. To overcome this shortcoming, many experts have considered the alternative approaches to prove \eqref{ie5.30}. For instance, the proof presented in \cite{hor+alv} uses a pointwise estimate involving the sharp maximal function; The proof in \cite{cur} utilizes an extrapolation techniques avoiding
good-$\lambda$ inequality that enables the derivation of estimates like \eqref{ie5.30} for any $A_1$ weights.

\vspace{0.2cm}

Due to the fact that the Coifman-Fefferman inequality can be utilized in the proofs of numerous other weighted norm inequalities, it plays a crucial role in the weighted theory. 
Firstly, as mentioned above, \eqref{ie5.30} can be used to characterize the weighted boundedness of $T$, i.e., if $T$ satisfies \eqref{ie5.30}, then $T$ is bounded on $L^p(w)$ for any weight $w\in A_p, 1<p<\infty$. It is not widely known that the inequality \eqref{ie5.30} has been proven to be the crucial estimate in establishing the following non-standard two-weight result for $T$\cite{per7}, namely, for any weight $w$, it holds that
\begin{equation}\label{ie1.2}
\int_{\Rn}|Tf(x)|^pw(x)dx\leq c\int_{\Rn}|f(x)|^pM^{\lfloor p\rfloor+1}w(x)dx,
\end{equation}
where $\lfloor p\rfloor$ denotes the integer part of $p$ and $M^{\lfloor p\rfloor+1}$ means the $(\lfloor p\rfloor+1)$-fold composition of $M$. In fact, \eqref{ie1.2} is actually a strong type Fefferman-Stein inequality, and this discovery also illustrates the connection between two inequalities. Another essential consequence of inequality \eqref{ie5.30} is the crucial role it plays in the solution of Sawyer's conjecture, see \cite{hor+cru} for more details. In addition, \eqref{ie5.30} is closely related to the $C_p$ theory, which has been explored in \cite{hor+muc} and \cite{hor+saw}.
\vspace{0.1cm}

In recent years, how to specify the dependence of the implicit constants on the weight $w$ of the Coifman-Fefferman inequality has become an interesting topic. In this direction, Lerner et al.\cite{hor+ler} obtained the explicit dependence of the constant with respect $w\in A_p, 1\leq p<\infty$. Later on, 
the quantitative Coifman-Fefferman inequalities were proved for a variety of singular operators, 
including vector valued extensions and multilinear 
Calder\'{o}n-Zygmund operators in \cite{hor+ort1}.
In 2019, the quantitative Coifman-Fefferman inequality about rough singular integral operator $T_{\Omega}$ with $\Omega\in L^{\infty}(\mathbb{S}^{n-1})$ was established in \cite{hor+li}. Since no good-$\lambda$ estimate relating $T_{\Omega}$ and $M$ is known to hold, instead, the proof of this result is a consequence of the sparse domination for $T_{\Omega}$ together with the technique of principal cubes. Very recently, Lerner\cite{hor+ler10} gives a sufficient condition to describe the Coifman-Fefferman inequality of the non-degenerate singular integral operator.

\vspace{0.1cm}

The Coifman-Fefferman inequality of the commutator of Calder\'{o}n-Zygmund operator with a $\rm BMO$ function was given in \cite{hor+per10}: 
$$\|[b,T]f\|_{L^p(w)}\leq c\|b\|_{\rm {BMO}}\|M^2f\|_{L^p(w)},\,\hbox{for any }w\in A_{\infty}.$$
In 2012, Ortiz-Caraballo et al.\cite{hor+ort1} derived the following quantitative weighted inequality, which was achieved via establishing a pointwise domination of commutators. 
\begin{thC}[\cite{hor+ort1}]
Let $T$ be a Calder\'{o}n-Zygmund operator, $b\in{\rm BMO}$ and $w\in A_{\infty}$. For the $m$-th $(m\geq 2)$ commutator $T^{m}_b$, there exists a constant $c_{n,T}$ such that
$$\|T^m_{b}f\|_{L^p(w)}\leq c_{n,T}2^m\max\{1,p^{m+1}\}[w]^{m+1}_{A_{\infty}}
\|b\|^m_{\rm BMO}\|M^{m+1}f\|_{L^p(w)},\, 0<p<\infty.$$
\end{thC}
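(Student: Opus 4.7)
I would adopt the pointwise sharp--maximal--function strategy used for Calder\'on--Zygmund operators in \cite{hor+ort1}. The plan is to first derive a pointwise control of $M_\delta^{\#}(T^m_b f)$ by $\|b\|_{BMO}^m M_{L(\log L)^m}f$ plus lower--order commutators, and then iterate the quantitative Fefferman--Stein inequality for $A_\infty$ weights exactly $m+1$ times in order to convert this pointwise information into the stated norm inequality.

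\textbf{Main steps.} The starting point is the algebraic identity, valid for any constant $\lambda\in\mathbb{R}$,
\begin{equation*}
T^m_b f(x)=\sum_{j=0}^{m}\binom{m}{j}(-1)^{m-j}(b(x)-\lambda)^{j}\,T\!\left((b-\lambda)^{m-j}f\right)(x).
\end{equation*}
Applying it on each cube $Q$ with $\lambda=b_Q$, combining the standard sharp--maximal--function bound for $T$ with John--Nirenberg ($\|b-b_Q\|_{\exp L,Q}\lesssim\|b\|_{BMO}$) and the generalised H\"older inequality in Orlicz spaces yields, for $0<\delta<\varepsilon<1$,
\begin{equation*}
M_\delta^{\#}(T^m_b f)(x)\le C_m\,\|b\|_{BMO}^m\,M_{L(\log L)^m}f(x)+C_m\sum_{k=0}^{m-1}\binom{m}{k}\|b\|_{BMO}^{m-k}\,M_\varepsilon(T^k_b f)(x),
\end{equation*}
where the binomial sum is responsible for the $2^m$ factor. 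Next I would apply the quantitative Fefferman--Stein inequality $\|M_\delta g\|_{L^p(w)}\le c_n\max\{1,p\}[w]_{A_\infty}\|M_\delta^{\#}g\|_{L^p(w)}$ with $g=T^m_bf$, taking as base case ($m=0$) the already--known quantitative Coifman--Fefferman estimate $\|Tf\|_{L^p(w)}\le c_n\max\{1,p\}[w]_{A_\infty}\|Mf\|_{L^p(w)}$. A straightforward induction on $m$ then accumulates the desired $[w]_{A_\infty}^{m+1}$ and $(\max\{1,p\})^{m+1}=\max\{1,p^{m+1}\}$, and the well--known pointwise bound $M_{L(\log L)^m}f\lesssim M^{m+1}f$ closes the argument.

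\textbf{Main difficulty.} The hard part is the sharp bookkeeping of constants: the truncation exponents $\delta,\varepsilon$ must be chosen consistently across all $m$ levels so that each iteration of Fefferman--Stein contributes exactly one factor of $[w]_{A_\infty}$ and one factor of $\max\{1,p\}$, neither more nor fewer, which in turn forces one to invoke the reverse--H\"older property of $A_\infty$ to absorb $M_\varepsilon$ into $M_\delta$ at every level. A second subtlety is controlling the Orlicz averages $\|(b-b_Q)^k\|_{\exp L^{1/k},Q}\lesssim k!\,\|b\|_{BMO}^k$ and verifying that the resulting constants, combined with the binomial coefficients $\binom{m}{k}$, telescope to the announced $2^m\|b\|_{BMO}^m$ rather than to something worse such as $m!\,\|b\|_{BMO}^m$.
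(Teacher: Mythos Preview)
Theorem~C is a cited result from \cite{hor+ort1}; the present paper does not give its own proof of this statement, so there is no in-paper proof to compare against. That said, your proposal is exactly the strategy of \cite{hor+ort1}, and the present paper uses the very same scheme in Section~\ref{sec7} to prove the closely related Theorem~\ref{thm5.5}: a pointwise sharp-maximal bound of the form of Lemma~\ref{lem6.1}, followed by the quantitative Fefferman--Stein inequality (Lemma~\ref{lem2.4}) and induction on the order of the commutator. Your identification of the key ingredients (the algebraic expansion, John--Nirenberg in the form $\|b-b_Q\|_{\exp L,Q}\lesssim\|b\|_{\rm BMO}$, and the equivalence $M_{L(\log L)^m}\approx M^{m+1}$) is accurate.

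One small correction: the passage from $M_\varepsilon(T^k_bf)$ back to $M^{\#}_{\delta'}(T^k_bf)$ in the inductive step does not require the reverse H\"older property of $A_\infty$; it is simply another application of Lemma~\ref{lem2.4} with a smaller exponent, exactly as done in the proof of Theorem~\ref{thm5.5}. Your worry about $2^m$ versus $m!$ is legitimate bookkeeping, but in \cite{hor+ort1} the $2^m$ arises because in the pointwise estimate the coefficient of the top-order term $M_{L(\log L)^m}f$ already carries the full binomial sum $\sum_{k}\binom{m}{k}=2^m$, while the lower-order terms are absorbed inductively without compounding that factor; no factorial growth appears.
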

 \vspace{0.1cm}

\subsection{Motivation}
\
\newline
\indent 
It is universally acknowledged that sparse domination, owing to its localization and sparseness, has become a crucial and effective tool in quantitative weighted Fefferman-Stein inequality and Coifman-Fefferman inequality over the past few decades. One of its remarkable improvement was Lerner's demonstration \cite{hor+ler15,hor+ler14} that the Calder\'{o}n-Zygmund operator can be dominated by the sparse operators $\mathcal{A}_{\mathcal{S}}$ as follows: 
$$\mathcal{A}_{\mathcal{S}}f(x)=\sum_{Q\in \mathcal{S}}\frac{1}{|Q|}\int_Q|f(x)|dx\chi_{Q}(x),$$
where each $Q$ is a cube with its sides parallel to the axis and $\mathcal{S}$ is a sparse family.
This crucial fact facilitates a streamlined and alternative approach in proving the $A_2$ theorem which was initially established by Hyt\"{o}nen \cite{hor+hyt}. Subsequently, Lerner and Nazarov\cite{hor+ler18}, Conde-Alonso and Rey\cite{hor+con} independently demonstrated that the Calder\'{o}n-Zygmund operator $T$ can be dominated pointwisely by a finite number of sparse operators in the way that
$$|Tf(x)|\leq c_{n,T}\sum_{j=1}^{3^n}\mathcal{A}_{\mathcal{S}_j}f(x).$$
The ideas of sparse domination have been applied extensively to a wide range of extensions, such as commutators\cite{ler5}, rough singular integrals\cite{hor+con1}, Bochner-Riesz multipliers\cite{hor+bene} and so on. We refer the interested reader to \cite{hor+ber,cao1,li3,hor+tan} and the references therein.

As mentioned above, Lerner, Ombrosi and Rivera-R\'{\i}os\cite{ler5} obtained the Fefferman-Stein inequalities by establishing the pointwise sparse domination of $[b,T]$, as stated in the following theorem.
\begin{thD}[\cite{ler5}]\label{0.thmN}
Let $T$ be an $\omega$-Calder\'{o}n-Zygmund operator $($for the definition, see Section \ref{sec2}$)$ and $\omega$ satisfies Dini condition: $\int_{0}^1\omega(t)\frac{dt}{t}<\infty$. If $b\in {\mathrm{BMO}}$, then for every weight $w$ and $0<\varepsilon<1$,
\begin{equation*}
w\big(\{x\in \mathbb{R}^n:\big|[b,T]f(x)\big|>\lambda\}\big)\lesssim \frac{1}{\varepsilon}\int_{\mathbb{R}^n}\Phi
\Big(\frac{\|b\|_{\mathrm{BMO}}|f(x)|}{\lambda}\Big)M_{L(\log L)^{1+\varepsilon}}w(x)dx.
\end{equation*}
In particular, when $w\in A_{1}$,
$$w\big(\{x\in \mathbb{R}^n:\big|[b,T]f(x)\big|>\lambda\}\big)\lesssim
[w]_{A_1}\Phi([w]_{A_{\infty}})\int_{\mathbb {R}^n}\Phi\Big(\frac{\|b\|_{\mathrm{BMO}}|f(x)|}{\lambda}\Big)w(x)dx,$$
where $\Phi(t)=t\log(e+t)$.
\end{thD}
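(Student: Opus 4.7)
The plan is to prove \textbf{Theorem D} by combining a pointwise sparse domination of $[b,T]$ with a sharp weak-type endpoint estimate for the resulting sparse commutator; the improvement from $1/\varepsilon^{2}$ (which naive specialization of \eqref{ie5.36} to $m=1$ would give) to $1/\varepsilon$ comes from avoiding iteration. Because $T$ is $\omega$-Calder\'{o}n-Zygmund with $\omega$ satisfying the Dini condition, the Lerner-Nazarov / Conde-Alonso-Rey technique gives, for each bounded compactly supported $f$, sparse families $\mathcal{S}_{j}\subset\mathcal{D}_{j}$ $(j=1,\ldots,3^{n})$ with $|Tf(x)|\lesssim \sum_{j}\sum_{Q\in\mathcal{S}_{j}}\langle |f|\rangle_{Q}\chi_{Q}(x)$. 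Applying this bound to each piece of the algebraic identity $[b,T]f=(b-b_{Q})Tf-T((b-b_{Q})f)$ along the stopping cubes of $\mathcal{S}_{j}$ yields
$$|[b,T]f(x)|\lesssim \sum_{j=1}^{3^{n}}\bigl(\mathcal{A}_{\mathcal{S}_{j},b}f(x)+\mathcal{A}^{*}_{\mathcal{S}_{j},b}f(x)\bigr),$$
where $\mathcal{A}_{\mathcal{S},b}f(x)=\sum_{Q\in\mathcal{S}}|b(x)-b_{Q}|\langle |f|\rangle_{Q}\chi_{Q}(x)$ and $\mathcal{A}^{*}_{\mathcal{S},b}f(x)=\sum_{Q\in\mathcal{S}}\langle|b-b_{Q}|\,|f|\rangle_{Q}\chi_{Q}(x)$.

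It then suffices to prove, for any weight $w$, any $\varepsilon\in(0,1)$, and any $\lambda>0$,
$$w\bigl(\{\mathcal{A}_{\mathcal{S},b}f+\mathcal{A}^{*}_{\mathcal{S},b}f>\lambda\}\bigr)\lesssim \frac{1}{\varepsilon}\int_{\mathbb{R}^{n}}\Phi\!\Big(\tfrac{\|b\|_{\mathrm{BMO}}|f(x)|}{\lambda}\Big)M_{L(\log L)^{1+\varepsilon}}w(x)\,dx.$$
After normalizing $\|b\|_{\mathrm{BMO}}=1$ and $\lambda=1$, I would run a Calder\'{o}n-Zygmund decomposition $f=g+h$ at level $1$. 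The bounded part $g$ is handled by Chebyshev against the $L^{2}$ boundedness of $\mathcal{A}_{\mathcal{S},b}$ (a consequence of sparse $L^{2}$-bounds plus John-Nirenberg for $b$). The bad part $h=\sum_{j}h_{j}$ is localized to stopping cubes $Q_{j}$; letting $\Omega=\bigcup_{j}Q_{j}$, the contribution on $\Omega$ is absorbed into $w(\Omega)\lesssim \int \Phi(|f|)Mw\,dx$ via the $L\log L$ level-set computation, while off $\Omega$ the contribution is reorganized into a sparse sum testable against $M_{L(\log L)^{1+\varepsilon}}w$.

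The $1/\varepsilon$ saving comes from estimating the oscillatory term $\langle|b-b_{Q}|\,|h|\rangle_{Q}$ by a \emph{single} generalized H\"{o}lder duality in Orlicz spaces,
$$\frac{1}{|Q|}\int_{Q}|b-b_{Q}|\,|h|\,dx \lesssim \|b-b_{Q}\|_{\exp L,Q}\,\|h\|_{L\log L,Q}\lesssim \|b\|_{\mathrm{BMO}}\,\|h\|_{L\log L,Q},$$
using John-Nirenberg exactly once. Dualizing the resulting sparse $L\log L$ average against $w$ through $M_{L(\log L)^{1+\varepsilon}}$ then produces the factor $\int_{1}^{\infty}\frac{dt}{t(\log t)^{1+\varepsilon}}\sim 1/\varepsilon$ \emph{once}, rather than twice as in the iterated weak-type approach that gives $1/\varepsilon^{2}$. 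I expect the \emph{main obstacle} to be carrying this single-Orlicz duality through the sparse machinery without reintroducing an extra $1/\varepsilon$; this requires a principal-cube construction compatible with both the sparse family $\mathcal{S}$ and the BMO oscillation of $b$, so that the $\exp L$ bound on $|b-b_{Q}|$ and the $L\log L$ average on $f$ interact efficiently on each principal cube. Finally, the $A_{1}$ corollary follows from the sharp reverse-H\"{o}lder inequality $M_{L(\log L)^{1+\varepsilon}}w(x)\lesssim [w]_{A_{1}}(1+\log(e+[w]_{A_{\infty}}))^{1+\varepsilon}w(x)$ after optimizing $\varepsilon = 1/\log(e+[w]_{A_{\infty}})$.
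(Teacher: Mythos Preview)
Your sparse domination step is correct and matches both \cite{ler5} and the paper. After that, however, your route diverges and contains a genuine gap. You propose a Calder\'on--Zygmund decomposition $f=g+h$ at level $1$ and claim the bad part off $\Omega$ can be ``reorganized into a sparse sum testable against $M_{L(\log L)^{1+\varepsilon}}w$'' via a single Orlicz--H\"older step; you then acknowledge that the \emph{main obstacle} is to avoid reintroducing an extra $1/\varepsilon$ and appeal to an unspecified ``principal-cube construction compatible with both the sparse family $\mathcal S$ and the BMO oscillation of $b$.'' That unspecified construction is exactly the heart of the matter: the classical CZ-decomposition route for commutators (as in \cite{hor+per12,per9}) is precisely what produces $1/\varepsilon^{m+1}$, and nothing in your outline explains how the interaction between the sparse cubes, the CZ cubes $Q_j$, and the oscillation $|b-b_Q|$ can be summed with only a single loss. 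Writing $\langle |b-b_Q|\,|h|\rangle_Q\lesssim \|h\|_{L\log L,Q}$ is fine on a fixed cube, but you still have to sum these $L\log L$ averages over the sparse family against $w$, and it is in that summation that the second $1/\varepsilon$ typically appears.

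The approach actually used in \cite{ler5} and in this paper is different and bypasses CZ decomposition of $f$ entirely. One restricts to the set $E=\{\mathcal A_{\mathcal S,b}f>4,\ M_\Phi f\le 1/4\}$ (the complement is handled by Lemma~\ref{lem5.4}), then \emph{stratifies the sparse family} into layers $\mathcal S_k=\{Q\in\mathcal S: 4^{-k-1}<\langle|f|\rangle_Q\le 4^{-k}\}$ and applies the key technical Lemma~\ref{lem5.3} (which is \cite[Lemma 4.3]{ler5}) to each layer. For the operator $\mathcal A^{*}_{\mathcal S,b}$ this already gives a geometric sum yielding a single $1/\varepsilon$. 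For $\mathcal A_{\mathcal S,b}$ one further splits each $Q\in\mathcal S_k$ by the John--Nirenberg level set $\{|b-b_Q|>(8/5)^k\}$; the ``small-$b$'' part is again a straight application of Lemma~\ref{lem5.3}, and the ``large-$b$'' part uses $|\{|b-b_Q|>(8/5)^k\}|\lesssim e^{-c(8/5)^k}|Q|$ together with an Orlicz--H\"older factorization $\|w\chi_E\|_{L\log L,Q}\lesssim \Psi^{-1}(|Q|/|E|)^{-1}\|w\|_{\Phi_1\circ\varphi,Q}$. Both pieces sum to $\lesssim 1/\varepsilon$ after choosing $\varphi(t)=t\log^{\varepsilon}(e+t)$; see the estimates \eqref{ie5.7}--\eqref{ie5.16} in the paper (specialized to $m=1$). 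Your $A_1$ corollary, by contrast, is handled exactly as you say: reverse H\"older plus the choice $\varepsilon=1/\log(e+[w]_{A_\infty})$; compare the proof of Corollary~\ref{cor5.2}.
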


Our motivation orginates from the following three aspects:
\vspace{0.2cm}

\textbf{(1).}  The study of $A$-H\"{o}rmander operators and their commutators has drawn significant attention due to their deep connections with weighted inequalities. In 2008, Lorente et al. \cite{hor+lor2} made substantial progress by establishing the Coifman-Fefferman inequality for commutators associated with $A_{\infty}$ weights. Subsequently, they further extended these results \cite{hor+lor3} by proving the Fefferman-Stein inequality under the framework of maximal functions dependent on a Young function  $A$. More recently, Ibañez-Firnkorn and Rivera-Ríos \cite{iba} employed the sparse domination technique to obtain  various quantitative weighted results for $A$-H\"{o}rmander operator $T$ and the $m$ order iterated commutators $T^m_{b}$, including strong weighted estimates, Coifman-Fefferman inequalities, weak-type endpoint estimates, and local exponential decay estimates. As a notable application, they generalized prior results from \cite{ler5} by proving Fefferman-Stein inequalities for $T^{m}_b$, where  $T$ is assumed to be an $\omega$-Calder\'{o}n-Zygmund operator. These developments have significantly expanded our understanding of the interplay between H\"{o}rmander-type conditions, Young functions, and the underlying properties of operators and their kernels. 

While these advances are primarily focused on the commutators  $T^{m}_b$, it remains an open problem whether these results can be extended to more general framework. In particular, what kinds of sparse domination and weak type endpoint estimates with arbitrary weights does $T_{\vec b}$ satisfy forms our first motivation.
\vspace{0.2cm}

\textbf{(2).} A further challenge arises in understanding the precise dependence of the exponent $\varepsilon$ appearing in Theorem B, particularly its relationship with the parameters $m$ and the number of functions $b_i$ involved in the commutators $T_{\vec{b}}$. 
Theorem B indicates that  $\varepsilon$ is inherently tied to $m$ and Theorem D holds only in the case $m=1$, which limits the flexibility and generality of the estimates, especially when dealing with higher-order commutators or varying numbers of functions $b_i$. This dependence becomes particularly problematic when considering higher-order commutators or scenarios with varying numbers of functions $b$, as it obscures the individual influence of these parameters on the behavior of $\varepsilon$.

Our second motivation is to decouple the dependence of  $\varepsilon$ on $m$. Such a decoupling would not only provide deeper insight into the behavior of  $\varepsilon$ but also allow for sharper and more refined estimates that better capture the individual contributions of $m$ and the number of $b_i$'s. 
\vspace{0.2cm}

\textbf{(3).}  Our third motivation is to improve the precision of the constants and to extend the range of indices in the weighted Coifman-Fefferman inequalities for $T_b^m$ or  $T_{\vec{b}}$. By refining the dependence of the constants on the parameters of the operators and the weights, as well as expanding the scope of the inequalities to a broader class of indices, we aim to provide a more comprehensive and flexible framework for understanding these inequalities. This will help to understand the intricate relationship between the structure of $T_{\vec{b}}$, the weights, and the associated norm inequalities.

\subsection{Main results}\label{sec1.5}
\
\newline
\indent In this paper, we first establish a pointwise sparse domination principle for the general commutators of $A$-H\"{o}rmander operators, providing a useful tool for their quantitative analysis. As a key application of this framework, we derive the Fefferman-Stein inequality in the context of arbitrary weights. Furthermore, we develop a quantitative version of the weighted Coifman-Fefferman inequality, offering sharper constants and a broader range of applicability. These results extend and unify many previous works, which contribute to a more comprehensive and profound understanding of the theory of commutators in the weighted setting.

 Before stating our results, we need to give a  definition as follows.
\begin{definition}\label{def5.3}
Let $1\leq p_0\leq p_1<\infty$. Define the class of functions $\mathcal{Y}(p_0,p_1)$ as the class of functions $A$ for which there exsit constants
$c_{A,p_0},~c_{A,p_1}$ and $t_A\geq 1$ such that
\begin{equation*}
t^{p_0}\leq c_{A,p_0}A(t),~t>t_A;\quad t^{p_1}\leq c_{A,p_1}A(t),~t\leq t_A.
\end{equation*}
\end{definition}
We now present several examples of Young functions that belong to $\mathcal{Y}(p_0,p_1)$:
\begin{enumerate}
  \item If $A(t)=t^r,\,1<r<\infty$, then $A\in \mathcal{Y}(r,r)$;
  \item Let $A(t)=t\log^{\alpha}(e+t),\,\alpha>0$, we have $A\in \mathcal{Y}(1,1)$;
  \item When $A(t)=t\log^{\alpha}(e+t)\log^{\beta}(e+\log(e+t)),\,\alpha,\beta>0$, then $A\in \mathcal{Y}(1,1)$.
\end{enumerate}
The above examples essentially cover several types of Young functions that we want to investigate. Therefore, the requirement that $A \in \mathcal{Y}(p_0,p_1)$ does not constitute a substantial constraint on our conclusions.

Now we are ready to state our main results. The first one is the following pointwise sparse domination for the general commutators of $A$-H\"{o}rmander operators.
\begin{theorem}\label{thm5.1}
Let $m\in\mathbb{N}$ and $A$ be a Young function. Assume that $T$ is an $\bar{A}$-H\"{o}rmander operators. If $A\in \mathcal{Y}(p_0,p_1)~(1\leq p_0\leq p_1<\infty)$, then for any locally integrable functions $\vec{b}=(b_1,\ldots,b_m)$ on $\mathbb{R}^n$ and bounded function $f$ with compact support, there exist $3^n$ sparse families $\{\mathcal{S}_j\}_{j=1}^{3^n}$ such that
\begin{equation}\label{ie0.1}
|T_{\vec{b}}f(x)|\leq C_{n,m}C_{T}\sum_{j=1}^{3^n}\sum_{\vec{\gamma}\in\{0,1\}^m}\mathcal{A}^{\vec\gamma}_{A,\mathcal{S}_j}(\vec{b},f)(x),\quad \text{a.e.} \ x \in \mathbb{R}^n,
\end{equation}
where $C_T=c_{n,p_0,p_1}\max\{c_{A,p_0},c_{A,p_1}\}\big(H_{\bar{A}}+\|T\|_{L^2\rightarrow L^2}\big)$ and
\begin{equation*}
\mathcal{A}^{\vec\gamma}_{A,\mathcal{S}_j}(\vec{b},f)(x)=
    		\begin{cases}
    			\sum\limits_{Q\in \mathcal{S}_j}\big\|\prod_{i=1}^m(b_i-\langle b_i\rangle_Q)f\big\|_{A,Q}\chi_{Q}(x),  &\vec{\gamma}=\vec 0,\\
    			\sum\limits_{Q\in \mathcal{S}_j}\prod\limits_{s:\gamma_s=1}\big|b_{s}(x)-\langle b_{s}\rangle_Q\big|\Big\|\prod\limits_{s:\gamma_s=0}(b_{s}-\langle b_{s}\rangle_Q)f\Big\|_{A,Q}\chi_{Q}(x), &\vec{\gamma}\neq\vec 0.
    		\end{cases}
\end{equation*}
\end{theorem}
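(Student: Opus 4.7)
\emph{Approach.} The plan is to adapt Lerner's grand-maximal-function stopping-time sparse extraction to the commutator setting via the standard BMO expansion: for any cube $Q_0$ and $x\in Q_0$,
\begin{equation*}
\prod_{j=1}^{m}(b_j(x)-b_j(y))=\sum_{\sigma\subset\{1,\ldots,m\}}(-1)^{|\sigma|}\prod_{j\notin\sigma}(b_j(x)-\langle b_j\rangle_{Q_0})\prod_{j\in\sigma}(b_j(y)-\langle b_j\rangle_{Q_0}),
\end{equation*}
which rewrites $T_{\vec b}f(x)$ as $\sum_\sigma(-1)^{|\sigma|}\prod_{j\notin\sigma}(b_j(x)-\langle b_j\rangle_{Q_0})\cdot T(g_{\sigma,Q_0})(x)$ with $g_{\sigma,Q_0}:=\prod_{j\in\sigma}(b_j-\langle b_j\rangle_{Q_0})f$. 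Matching $\vec\gamma\in\{0,1\}^m$ to $\sigma$ by $\gamma_j=1\iff j\notin\sigma$ turns each $\sigma$-term into precisely a summand of the form on the right of \eqref{ie0.1}. It thus suffices to sparse-dominate $T(g_{\sigma,Q_0})$ pointwise on $Q_0$ by the Orlicz average $\|g_{\sigma,Q_0}\|_{A,Q_0}\chi_{Q_0}$ with constant $C_T$, and to re-apply the expansion at every subcube of the recursion.

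\emph{Stopping-time construction.} I will run the Lerner--Nazarov / Conde-Alonso--Rey argument in each of the $3^n$ shifted dyadic grids. Fix a top cube $Q_0$ and select a first generation of maximal stopping subcubes $\{Q_j\}$ by a Calder\'on--Zygmund-type threshold applied simultaneously to the local pieces $T(g_{\sigma,Q_0}\chi_{3Q_0})$ and to the grand-maximal truncation
\begin{equation*}
\mathcal{M}_T^\# g(x):=\sup_{Q\ni x}\esss_{\xi,\xi'\in Q}|T(g\chi_{\mathbb{R}^n\setminus 3Q})(\xi)-T(g\chi_{\mathbb{R}^n\setminus 3Q})(\xi')|
\end{equation*}
of the global pieces. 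The pointwise bound on $Q_0\setminus\bigcup_j Q_j$ then reduces to two ingredients. The \emph{local} one is the weak-type average
\begin{equation*}
\frac{1}{|Q_0|}\bigl|\{x\in Q_0:|T(g\chi_{3Q_0})(x)|>\lambda\}\bigr|\lesssim \frac{\|T\|_{L^2\to L^2}}{\lambda}\|g\|_{A,3Q_0},
\end{equation*}
obtained from the $L^2$-boundedness of $T$, Kolmogorov's inequality, and the Orlicz embedding $\|g\|_{L^{p_i},Q}\lesssim\|g\|_{A,Q}$ supplied by $A\in\mathcal{Y}(p_0,p_1)$, where the constants $c_{A,p_0},c_{A,p_1}$ enter $C_T$. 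The \emph{global} one is the oscillation estimate
\begin{align*}
|T(g\chi_{(3Q_0)^c})(\xi)-T(g\chi_{(3Q_0)^c})(\xi')|
&\leq\sum_{k=1}^\infty\int_{2^kQ_0\setminus 2^{k-1}Q_0}|K(\xi,y)-K(\xi',y)|\,|g(y)|\,dy\\
&\leq 2\sum_{k=1}^\infty|2^kQ_0|\,\|(K(\xi,\cdot)-K(\xi',\cdot))\chi_{2^kQ_0\setminus 2^{k-1}Q_0}\|_{\bar A,2^kQ_0}\|g\|_{A,2^kQ_0}\\
&\leq 2 H_{\bar A}\sup_{k\geq 1}\|g\|_{A,2^kQ_0},
\end{align*}
via Young's duality $\int|uv|\leq 2\|u\|_A\|v\|_{\bar A}$ in Orlicz spaces and Definition~\ref{def5.2}. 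The supremum over dilates $2^kQ_0$ is absorbed by picking, for each $k$, the enclosing cube of comparable size from whichever of the $3^n$ shifted grids contains it.

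\emph{Main obstacle.} The principal technical difficulty is correctly transporting the BMO factors $(b_j-\langle b_j\rangle_{Q_0})$ through the recursion: descending to a stopping subcube $Q\subset Q_0$ replaces $\langle b_j\rangle_{Q_0}$ by $\langle b_j\rangle_Q$, so the subset expansion must be redone at every generation, and the telescoping differences $\langle b_j\rangle_Q-\langle b_j\rangle_{Q_0}$ must be absorbed into the $A$-average $\|\cdot\|_{A,Q}$ via John--Nirenberg-type control (once again leveraging $A\in\mathcal{Y}(p_0,p_1)$). Checking that the final summation over all generations collapses into exactly the $\vec\gamma$-indexed structure in \eqref{ie0.1}, with the clean constant $C_{n,m}C_T$ and without logarithmic overhead, is the delicate bookkeeping step on which the $2^m$ factors and the dependence on $H_{\bar A}+\|T\|_{L^2\to L^2}$ rest.
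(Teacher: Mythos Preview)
Your overall strategy (subset expansion plus Lerner-type stopping-time recursion) is correct and is exactly what the paper does. However, you misidentify where the difficulty lies, and your proposed fix for that ``obstacle'' would not work.

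The recursion is on the quantity $|T_{\vec b}(f\chi_{3Q})(x)|\chi_Q(x)$ itself, which depends on $\vec b$ only through the differences $b_j(x)-b_j(y)$. Since $T_{\vec b-\vec c}=T_{\vec b}$ for any constant vector $\vec c$, when you descend to a stopping child $P\subset Q_0$ you simply re-run the subset expansion with centering $\langle b_j\rangle_{P}$ (or $\langle b_j\rangle_{R_P}$) rather than $\langle b_j\rangle_{Q_0}$. There is no telescoping term $\langle b_j\rangle_P-\langle b_j\rangle_{Q_0}$ to absorb, because the object you feed back into the recursion is $T_{\vec b}(f\chi_{3P})$, not $T(g_{\sigma,Q_0}\chi_{3P})$. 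In particular, your appeal to John--Nirenberg is both unnecessary and unavailable: the hypothesis is only $b_j\in L^1_{\rm loc}$, not $b_j\in\mathrm{BMO}$.

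A second, smaller gap: your global oscillation estimate terminates in $\sup_{k\ge 1}\|g\|_{A,2^kQ_0}$, and your plan to absorb this supremum into the $3^n$ grids is unclear (it does not reduce to a single average over one cube). The paper avoids this entirely by first covering $\mathbb{R}^n$ with cubes $Q_j$ so that $\operatorname{supp} f\subset 3Q_j$; the stopping argument then runs on $T_{\vec b}(f\chi_{3Q_j})$ with a purely local grand maximal operator $\mathcal{M}_{T,Q_j}$, and the Three Lattice Theorem is applied only at the very end to replace $3Q$-averages by dyadic ones.
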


For the endpoint case, using Theorem \ref{thm5.1}, we establish the weak type Fefferman-Stein inequality with arbitrary weights of the general commutator $T_{\vec{b}}$.
\begin{theorem}\label{thm5.2}
Let $m\in\mathbb{N}$ and $\vec{b}=(b_1,\ldots,b_m)\in \mathrm{BMO}^m$ with $\|b_i\|_{\mathrm{BMO}}=1,i=1,\ldots,m$. Assume that $A_0, A_1,\ldots,A_m$ are Young functions satisfying $A_{0}\in \mathcal{Y}(p_0,p_1)~(1\leq p_0\leq p_1<\infty)$ and $A^{-1}_{i}(t)\bar{A}^{-1}_{i-1}(t)C^{-1}(t)\leq ct, t\geq 1, i=1,\ldots,m$, where $C(t)=e^{t}-1$.
If $T$ is an $\bar{A}_0$-H\"{o}rmander operator and $A_i$ is submultiplicative, i.e., $A_i(xy)\leq A_i(x)A_i(y)$, then for any weight $w$ and every family of Young functions $\varphi_{\vec{l}}:=\varphi_{(l_1,l_2)},\,0\leq l_2 < l_1\leq m$, it holds that 
\begin{align*}
    w\big(\{x\in \mathbb{R}^n:|T_{\vec b}f(x)|>\lambda\}\big)&\lesssim K_{\varphi_{(m,m)}}\int_{\mathbb{R}^n}A_m\Big(\frac{|f(x)|}{\lambda}\Big)M_{\varphi_{(m,m)}}w(x)dx\\
    &+\sum_{l_1=1}^m\sum_{l_2=0}^{l_1-1}K_{\varphi_{(l_1,l_2)}}
    \int_{\mathbb{R}^n}A_{m-l_1}\Big(\frac{|f(x)|}{\lambda}\Big)M_{\Phi_{l_1-l_2}\circ\varphi_{(l_1,l_2)}}w(x)dx,
\end{align*}
with
\begin{equation}\label{ie8.3}
K_{\varphi_{(l_1,l_2)}}=
    		\begin{cases}
c_n+\int_{1}^{\infty}\frac{\varphi^{-1}_{(m,m)}(t)A_m(\log^2(e+t))}{t^2\log^3(e+t)}dt,  &l_1=l_2=m,\\		
    c_n+\beta_{m,l_1,l_2}+\int_{1}^{\infty}\frac{\varphi^{-1}_{(l_1,l_2)}\circ\Phi^{-1}_{l_1-l_2}(t)
    A_{m-l_1}(\log^{2l_1+1}(e+t))}{t^2\log^{l_1+2}(e+t)}dt,	 &\text{others},
    \end{cases}
    \end{equation}
where $\beta_{m,l_1,l_2}$ is an absolute constant and $\Phi_{i}(t)=t\log^i(e+t)$.

\end{theorem}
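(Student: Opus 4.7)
The plan is the following. By Theorem~\ref{thm5.1}, for a.e.\ $x\in\mathbb{R}^n$,
$$|T_{\vec b}f(x)|\lesssim \sum_{j=1}^{3^n}\sum_{\vec\gamma\in\{0,1\}^m}\mathcal{A}^{\vec\gamma}_{A_0,\mathcal{S}_j}(\vec b,f)(x),$$
so by subadditivity of $w(\{\,\cdot\,>\lambda\})$ and finiteness of $3^n\cdot 2^m$, it suffices to prove the claimed endpoint estimate for a single sparse piece $\mathcal{A}^{\vec\gamma}_{A_0,\mathcal{S}}(\vec b,f)$. The two regimes $\vec\gamma=\vec 0$ and $\vec\gamma\neq\vec 0$ correspond respectively to the $(l_1,l_2)=(m,m)$ term and to the double sum on the right-hand side.

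In the case $\vec\gamma=\vec 0$ I would iterate the generalized Orlicz--H\"older inequality, peeling off one BMO function at each step via the structural hypothesis $A_i^{-1}(t)\bar A_{i-1}^{-1}(t)C^{-1}(t)\leq ct$ with $C(t)=e^t-1$ (so that one $\exp L$-type factor is extracted per iteration):
$$\Bigl\|\prod_{i=1}^m(b_i-\langle b_i\rangle_Q)\,f\Bigr\|_{A_0,Q}\lesssim \|f\|_{A_m,Q}\prod_{i=1}^m\|b_i-\langle b_i\rangle_Q\|_{\exp L,Q}.$$
The John--Nirenberg inequality gives $\|b_i-\langle b_i\rangle_Q\|_{\exp L,Q}\lesssim\|b_i\|_{\mathrm{BMO}}=1$, reducing the sparse object to $\sum_Q\|f\|_{A_m,Q}\chi_Q$. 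An arbitrary-weight weak endpoint bound for the Orlicz sparse average (obtained via a Carleson embedding in the spirit of P\'erez's argument) then yields the $(m,m)$ contribution, whose constant $K_{\varphi_{(m,m)}}$ is produced by the integral criterion of~\cite{per7} applied to the submultiplicative pair $(A_m,\varphi_{(m,m)})$.

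For $\vec\gamma\neq\vec 0$, write $l_1=|\vec\gamma|$. The external factors $\prod_{s:\gamma_s=1}|b_s(x)-\langle b_s\rangle_Q|$ must be absorbed. For each coordinate $s$ with $\gamma_s=1$ I would run a principal-cube stopping time on $\mathcal{S}$, producing a sparse subfamily $\mathcal F_s$ along which the averages $\langle|b_s|\rangle$ roughly double, so that for every $Q\subset P\in\mathcal F_s$,
$$b_s(x)-\langle b_s\rangle_Q=\bigl(b_s(x)-\langle b_s\rangle_P\bigr)+\bigl(\langle b_s\rangle_P-\langle b_s\rangle_Q\bigr),$$
the second summand being controlled by a telescoping sum that costs at most a constant multiple of $\|b_s\|_{\mathrm{BMO}}$. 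A further Orlicz--H\"older step, using the same tower of Young functions as in the $\vec\gamma=\vec 0$ case, converts the first summand into an $\exp L$-type contribution. Iterating this scheme $l_1$ times and separating the multiplicative structure imposed by the stopping generations (parameterised by $l_2<l_1$) produces the composite Young function $\Phi_{l_1-l_2}\circ\varphi_{(l_1,l_2)}$ in the Orlicz maximal function on the right, and leaves $|f|/\lambda$ measured by $A_{m-l_1}$.

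The main obstacle is to verify that, after all the Orlicz--H\"older manipulations and principal-cube bookkeeping, the composite Young function controlling $w$ is exactly $\Phi_{l_1-l_2}\circ\varphi_{(l_1,l_2)}$ and that the logarithmic exponents in~\eqref{ie8.3} come out as $\log^{2l_1+1}$ and $\log^{l_1+2}$, without any spurious term depending on $m$. This sharp counting is what effects the decoupling advertised in our motivation: the fineness parameter is encoded entirely in the single Young function $\varphi_{(l_1,l_2)}$ rather than in a power of $\varepsilon$ tied to $m$, which is the essential improvement over Theorems~B and~D.
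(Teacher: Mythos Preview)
Your reduction via Theorem~\ref{thm5.1} and the Orlicz--H\"older peeling in the $\vec\gamma=\vec 0$ case are correct and match the paper. However, from that point on your sketch diverges from the paper's mechanism in a way that leaves a real gap.

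For $\vec\gamma\neq\vec 0$ the paper does \emph{not} run a principal-cube stopping time on the BMO functions. Instead it stratifies the sparse family by the size of $\|f\|_{A_{m-l_1},Q}$: set $\mathcal{S}_k=\{Q\in\mathcal{S}:4^{-k-1}<\|f\|_{A_{m-l_1},Q}\le 4^{-k}\}$. Within each stratum the external BMO factor is split by a \emph{threshold} depending on $k$, namely $\mathcal{H}^s_k(Q)=\{x\in Q:|b_s(x)-\langle b_s\rangle_Q|>r^k\}$ for a carefully chosen $r=r_{l_1}$ with $r^{2l_1}<4\le r^{2l_1+1}$. On $Q\setminus\mathcal{H}^s_k$ the factor is simply bounded by $r^k$; on $\mathcal{H}^s_k$ the John--Nirenberg inequality gives $|\mathcal{H}^s_k(Q)|\le e^{-c\,r^k}|Q|$, and a generalized H\"older step $\|w\chi_{\mathcal{H}^s_k(Q)}\|_{L(\log L)^{l_1-l_2},Q}\lesssim \Psi^{-1}_{(l_1,l_2)}(|Q|/|\mathcal{H}^s_k(Q)|)^{-1}\|w\|_{\Phi_{l_1-l_2}\circ\varphi_{(l_1,l_2)},Q}$ is what introduces the composite Young function $\Phi_{l_1-l_2}\circ\varphi_{(l_1,l_2)}$. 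The precise exponents $2l_1+1$ and $l_1+2$ in~\eqref{ie8.3} come from the calibration between $4^{-k}$, $r^k$, and the exponential decay $e^{-c\,r^k}$ after summing in $k$; the index $l_2$ counts how many of the $l_1$ external factors fall in the ``small'' regime. Both this piece and the ``all small'' piece are closed using Lemma~\ref{lem5.3} (from \cite{ler5}), which is the replacement for what you call ``a Carleson embedding in the spirit of P\'erez's argument''.

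Your principal-cube scheme is a different idea, and two points are unclear. First, the telescoping bound $|\langle b_s\rangle_P-\langle b_s\rangle_Q|\lesssim\|b_s\|_{\mathrm{BMO}}$ holds only between \emph{consecutive} stopping generations; across many generations the bound is logarithmic in the generation count, and it is not evident how that logarithm interacts with the arbitrary weight $w$ to produce exactly $\Phi_{l_1-l_2}\circ\varphi_{(l_1,l_2)}$. Second, your outline never links the stopping data to the size of $\|f\|_{A_{m-l_1},Q}$, whereas in the paper it is precisely the coupling of the $f$-level $4^{-k}$ to the BMO threshold $r^k$ that makes the sum $\sum_k$ converge with the stated integral constant and no residual $m$-dependence in the power of $\varepsilon$. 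Without that coupling, I do not see how your argument yields the explicit $K_{\varphi_{(l_1,l_2)}}$ rather than a cruder bound of the type in Theorem~B.
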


\begin{remark}
We now make some comments on Theorem \ref{thm5.2}. First of all, let $b_i=b,\,i=1,\ldots,m$, then the results of Theorem \ref{thm5.2} cover the conclusions in \cite[Theorem 2.7]{iba}. 
Secondly, if $m=0$, that is, $T_{\vec b}=T$, \cite[Theorem 3.1]{hor+lor3} proved that the $\bar{A}_0$-H\"{o}rmander operator $T$ satisfied the following Fefferman-Stein inequality under some certain conditions:
\begin{equation}\label{ie5.37}
w\big(\{x\in \mathbb{R}^n:|Tf(x)|>\lambda\}\big)\lesssim \int_{\mathbb{R}^n}\frac{|f(x)|}{\lambda}M_{A_0}w(x)dx.
\end{equation}
We note that the conclusions of Theorem \ref{thm5.2} are complementary to \eqref{ie5.37}.
Indeed, Martell, Pérez, and Trujillo \cite{hor+mar} demonstrated that the $\mathcal{H}_r$ condition is not sufficiently indicative of the validity of the $A_1$ conjecture. In particular, the weak-type $A_1$ estimate for the operator $T$ cannot be directly deduced from \eqref{ie5.37}. This limitation stems from the fact that the Orlicz maximal operators on the right-hand side of \eqref{ie5.37} are constrained by the choice of the initial Young function $A_0$. In contrast, the Orlicz maximal operators considered in our results are not subject to such restrictions. Moreover, in Theorem \ref{thm5.2}, the Young function $A_0$ is coupled with the function $f$, which is in stark contrast to the scenario in \eqref{ie5.37}, where $f$ remains independent of $A_0$. This distinction underscores a key difference: the dependency of the weight on the Young function in \eqref{ie5.37} imposes limitations on flexibility, whereas our results exhibit greater generality by avoiding such dependencies.
Therefore, these two results can be viewed as complementary.\vspace{0.1cm}

Finally, comparing Theorem \ref{thm5.2} with \cite[Theorem 3.8]{hor+lor3}, our results establish a connection between the boundedness constant and the family of Young functions $\varphi_{\vec{l}}:=\varphi_{(l_1,l_2)},\,0\leq l_2 < l_1\leq m$, which lays the groundwork for deriving the subsequent Theorem \ref{thm5.3}.
\end{remark}

Using Theorem \ref{thm5.2}, we can obtain the Fefferman-Stein inequalities for the general commutators of $\omega$-Calder\'{o}n-Zygmund operators with arbitrary weights.

\begin{theorem}\label{thm5.3}
Let $m\in\mathbb{N}$ and $\vec{b}=(b_1,\ldots,b_m)\in \mathrm{BMO}^m$. Assume that $T$ is an $\omega$-Calder\'{o}n-Zygmund operator with $\omega$ satisfying the Dini condition: $\int_{0}^1\omega(t)\frac{dt}{t}<\infty$. Then for any weight $w$ and every $0<\varepsilon<1$, we have
\begin{equation}\label{ie5.28}
w\big(\{x\in \mathbb{R}^n:|T_{\vec b}f(x)|>\lambda\}\big)
\lesssim \frac{1}{\varepsilon}\int_{\mathbb{R}^n}\Phi_m\Big(\frac{\big\|\vec{b}\big\|_{\mathrm {BMO}}|f(x)|}{\lambda}\Big)M_{L(\log L)^m(\log \log L)^{1+\varepsilon}}w(x)dx,
\end{equation}
where $\Phi_m(t)=t\log^m(e+t)$.
\end{theorem}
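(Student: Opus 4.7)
The plan is to deduce Theorem \ref{thm5.3} directly from Theorem \ref{thm5.2} via a specific choice of Young functions. First I would note that under the Dini hypothesis the $\omega$-Calder\'{o}n--Zygmund operator $T$ is $L^\infty$-H\"{o}rmander: for $x,z\in\tfrac12 Q$ and $y\in 2^kQ\setminus 2^{k-1}Q$ the annular estimate $|K(x,y)-K(z,y)|\lesssim \omega(2^{-k})/(2^k\ell(Q))^n$ yields
\[
\sum_{k\ge 1} |2^kQ|\,\sup_{y\in 2^kQ\setminus 2^{k-1}Q}|K(x,y)-K(z,y)| \lesssim \sum_{k\ge 1}\omega(2^{-k})\lesssim \int_0^1\omega(t)\frac{dt}{t}<\infty,
\]
so $T$ is $\bar A_0$-H\"{o}rmander with $A_0(t)=t$. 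I then take $A_i(t)=t\log^i(e+t)$ for $i=1,\ldots,m$. The hypotheses of Theorem \ref{thm5.2} are routine to verify: $A_0\in\mathcal Y(1,1)$; each $A_i$ is quasi-submultiplicative because $\log(e+xy)\lesssim \log(e+x)\log(e+y)$; and the complementarity $A_i^{-1}(t)\bar A_{i-1}^{-1}(t)C^{-1}(t)\lesssim t$, with $C^{-1}(t)\sim \log(1+t)$, follows from $A_i^{-1}(t)\sim t/\log^i(e+t)$ and $\bar A_{i-1}^{-1}(t)\sim \log^{i-1}(e+t)$ for $i\ge 2$ (with $\bar A_0^{-1}\sim 1$ for $i=1$), the three factors telescoping to $t$.

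Second, I would choose the auxiliary family
\[
\varphi_{(l_1,l_2)}(t)=t\log^{m-l_1+l_2}(e+t)\bigl(\log\log(e+t)\bigr)^{1+\varepsilon},\qquad 0\le l_2\le l_1\le m.
\]
Because $\varphi_{(l_1,l_2)}/t$ is slowly varying one has $\Phi_k(\varphi_{(l_1,l_2)}(t))\sim \varphi_{(l_1,l_2)}(t)\log^k(e+t)$, hence $\Phi_{l_1-l_2}\circ\varphi_{(l_1,l_2)}(t)\sim t\log^m(e+t)(\log\log(e+t))^{1+\varepsilon}$. Consequently, every Orlicz maximal operator appearing on the right-hand side of Theorem \ref{thm5.2} is pointwise comparable to $M_{L(\log L)^m(\log\log L)^{1+\varepsilon}}$. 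The insertion of the $(\log\log)^{1+\varepsilon}$ factor is the key design choice: it is precisely what will decouple the small parameter $\varepsilon$ from the commutator order $m$.

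Third, I would estimate $K_{\varphi_{(l_1,l_2)}}$ from \eqref{ie8.3}. Using $\varphi_{(l_1,l_2)}^{-1}\circ\Phi_{l_1-l_2}^{-1}(t)\sim t/\bigl(\log^m(e+t)(\log\log(e+t))^{1+\varepsilon}\bigr)$ and $A_{m-l_1}(\log^{2l_1+1}(e+t))\sim \log^{2l_1+1}(e+t)(\log\log(e+t))^{m-l_1}$, the defining integral reduces, after the substitution $u=\log(e+t)$, to a constant multiple of
\[
\int_1^\infty u^{l_1-m-1}(\log u)^{m-l_1-1-\varepsilon}\,du.
\]
For $l_1<m$ this is uniformly bounded in $\varepsilon$; for $l_1=m$ the substitution $v=\log u$ turns it into $\int_1^\infty v^{-1-\varepsilon}\,dv\sim 1/\varepsilon$. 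The diagonal term $K_{\varphi_{(m,m)}}$ is handled identically and stays bounded, so summing over the finitely many $(l_1,l_2)$ gives a total constant of order $1/\varepsilon$.

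Finally, the normalization $\|b_i\|_{\mathrm{BMO}}=1$ in Theorem \ref{thm5.2} is removed by applying it to $\tilde b_i=b_i/\|b_i\|_{\mathrm{BMO}}$ and $\lambda/\|\vec b\|_{\mathrm{BMO}}$ in place of $\lambda$; since $A_m(t)=\Phi_m(t)$ and $A_{m-l_1}(t)\lesssim \Phi_m(t)$, all terms can be gathered under the single function $\Phi_m$, yielding \eqref{ie5.28}. The main obstacle is the calibration of $\varphi_{(l_1,l_2)}$: it must simultaneously keep every Orlicz maximal operator equivalent to $M_{L(\log L)^m(\log\log L)^{1+\varepsilon}}$ and ensure that only the integral corresponding to $l_1=m$ generates the $1/\varepsilon$ singularity; once this balance is set, the rest is standard Young-function bookkeeping.
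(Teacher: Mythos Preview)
Your proposal is correct and follows essentially the same route as the paper: reduce to Theorem~\ref{thm5.2} with $A_0(t)=t$, $A_i(t)=t\log^i(e+t)$, and the identical choice $\varphi_{(l_1,l_2)}(t)=t\log^{m-l_1+l_2}(e+t)\log^{1+\varepsilon}(e+\log(e+t))$, then bound each $K_{\varphi_{(l_1,l_2)}}$. Two small points: (i) you do not mention the additive term $\beta_{m,l_1,l_2}$ in \eqref{ie8.3}, but this is a finite sum over $1\le k\le a_{n,l_1}$ whose $\varepsilon$-dependence enters only through $\varphi_{(l_1,l_2)}^{-1}$ evaluated at fixed arguments, hence it is uniformly bounded in $\varepsilon\in(0,1)$; (ii) your sharper observation that the integrals for $l_1<m$ and the diagonal $l_1=l_2=m$ are uniformly bounded (rather than $\lesssim 1/\varepsilon$) is correct---the paper simply uses the cruder bound $\lesssim 1/\varepsilon$ everywhere, which suffices.
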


\begin{remark}\label{remmark5.2}
First of all, in terms of operators, note that when $\omega(t)=t^\delta$ for some $0<\delta<1$, the $\omega$-Calder\'{o}n-Zygmund operator coincides with the classical Calder\'{o}n-Zygmund operator, which means that our results is an essential extension of Theorem B. Secondly, in terms of constant, compared with \cite{per9}, \eqref{ie5.28} completely separates the dependence between the crucial constant $\varepsilon$ and the number $m$ of the commutator, that is, the constant $\frac{1} {\varepsilon^{m+1}}$ in \eqref{ie5.36} has been improved to $\frac{1} {\varepsilon}$. Finally, in terms of weight, the fact $M_{L(\log L)^m(\log\log L)^{1+\varepsilon}}w(x)\leq C_{\varepsilon}M_{L(\log L)^{m+\varepsilon}}w(x)$ shows that Theorem \ref{thm5.3} generalizes the results of \cite{per9}.

\end{remark}

As an application of Theorem \ref{thm5.3}, we can directly obtain the following two classical forms of weak type endpoint estimates.

\begin{corollary}\label{cor5.1}
Let $m$, $T$ and $\vec{b}$ be as the same as in Theorem \ref{thm5.3}, then for any weight $w$ and every $0<\varepsilon<1$, there exists a constant $C_\varepsilon$ independent of $m$ such that
\begin{equation}\label{ie5.27}
w\big(\{x\in \mathbb{R}^n:|T_{\vec b}f(x)|>\lambda\}\big)\leq \frac{C_{n,m,T} C_{\varepsilon}}{\varepsilon}\int_{\mathbb{R}^n}\Phi_m\Big(\frac{\big\|\vec{b}\big\|_{\mathrm{ BMO}}|f(x)|}{\lambda}\Big)M_{L(\log L)^{m+\varepsilon}}w(x)dx,
\end{equation}
where $C_{\varepsilon}=e\log ^{1+\varepsilon}(e+\log (2e))+2^{1+\varepsilon}(1+1/\varepsilon)^{1+\varepsilon}$.

More generally, we can even obtain the following estimate:
\begin{equation}\label{ie5.24}
w\big(\{x\in \mathbb{R}^n:|T_{\vec b}f(x)|>\lambda\}\big)\leq \frac{C_{n,m,T}}{\varepsilon}\int_{\mathbb{R}^n}\Phi_m\Big(\frac{\big\|\vec{b}\big\|_{\mathrm{ BMO}}|f(x)|}{\lambda}\Big)M_{L(\log L)^{m+\varepsilon}}w(x)dx.
\end{equation}
\end{corollary}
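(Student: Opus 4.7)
The plan is to derive Corollary \ref{cor5.1} directly from Theorem \ref{thm5.3} by establishing a pointwise comparison between the two Orlicz maximal functions $M_{L(\log L)^m(\log\log L)^{1+\varepsilon}}$ and $M_{L(\log L)^{m+\varepsilon}}$. By a standard application of the convexity of Young functions, if the Young functions $\Phi_1(t)=t\log^m(e+t)(\log\log(e+t))^{1+\varepsilon}$ and $\Phi_2(t)=t\log^{m+\varepsilon}(e+t)$ satisfy $\Phi_1(t)\leq C_\varepsilon\Phi_2(t)$ for every $t\geq 0$ with $C_\varepsilon\geq 1$, then $M_{\Phi_1}w(x)\leq C_\varepsilon M_{\Phi_2}w(x)$ at every $x$. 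Hence the task for \eqref{ie5.27} reduces, after dividing out the common positive factor $t\log^m(e+t)$, to establishing the one-variable inequality
$$
(\log\log(e+t))^{1+\varepsilon}\leq C_\varepsilon\log^\varepsilon(e+t),\qquad t\geq 0.
$$

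Substituting $u=\log(e+t)\in[1,\infty)$, the inequality becomes $(\log u)^{1+\varepsilon}\leq C_\varepsilon u^\varepsilon$ for $u\geq 1$. I would split the range of $u$ into two regions. On the small-$u$ regime $u\in[1,e+\log(2e)]$, the logarithm $\log u$ is bounded by $\log(e+\log(2e))$ while $u^\varepsilon\geq 1$, which, after a crude multiplicative cushion, contributes the first summand $e\log^{1+\varepsilon}(e+\log(2e))$ of $C_\varepsilon$. On the large-$u$ regime $u>e+\log(2e)$, an elementary calculus argument identifies the critical point of $g(u)=(\log u)^{1+\varepsilon}/u^\varepsilon$ at $u^\ast=e^{(1+\varepsilon)/\varepsilon}$, with maximum value $\bigl(\tfrac{1+\varepsilon}{e\varepsilon}\bigr)^{1+\varepsilon}$; bounding this crudely above by $2^{1+\varepsilon}(1+1/\varepsilon)^{1+\varepsilon}$ yields the second summand of $C_\varepsilon$. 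Combining the two regions and inserting the resulting comparison into Theorem \ref{thm5.3} produces \eqref{ie5.27}.

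For the sharper estimate \eqref{ie5.24}, the plan is to re-apply Theorem \ref{thm5.3} with an auxiliary parameter $\varepsilon_0$ distinct from $\varepsilon$, chosen optimally in terms of $\varepsilon$, so that the cost of the comparison between the two Orlicz maximal functions can be balanced against the prefactor $1/\varepsilon_0$ and collapsed into a single explicit factor $1/\varepsilon$ with the remaining $\varepsilon$-dependent contributions absorbed into an absolute constant $C_{n,m,T}$. The main obstacle lies precisely in this balancing: a naive use of the pointwise comparison above already costs a factor of order $1/\varepsilon$, which when combined with the prefactor $1/\varepsilon$ coming from Theorem \ref{thm5.3} would only produce $1/\varepsilon^2$, whereas \eqref{ie5.24} asserts an overall $1/\varepsilon$ dependence. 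Overcoming this gap will require a more refined interplay between the choice of $\varepsilon_0$ and the precise Young-function comparison, essentially trading the $(\log\log)^{1+\varepsilon_0}$ growth against the extra $\log^\varepsilon$ factor more flexibly than a direct pointwise bound permits.
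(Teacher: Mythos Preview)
Your argument for \eqref{ie5.27} is correct and matches the paper's approach: the paper also deduces \eqref{ie5.27} from Theorem \ref{thm5.3} via the pointwise comparison $M_{L(\log L)^m(\log\log L)^{1+\varepsilon}}w(x)\leq C_\varepsilon M_{L(\log L)^{m+\varepsilon}}w(x)$, with exactly the constant $C_\varepsilon$ in the statement.

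For \eqref{ie5.24}, however, you have correctly diagnosed that your proposed route cannot work: any black-box application of Theorem \ref{thm5.3} followed by a pointwise Young-function comparison will cost an additional factor of order $1/\varepsilon$, and no choice of auxiliary parameter $\varepsilon_0$ can collapse the resulting $1/\varepsilon^2$ to $1/\varepsilon$. The paper does \emph{not} obtain \eqref{ie5.24} from Theorem \ref{thm5.3}. Instead it goes back to the more flexible Theorem \ref{thm5.2}, which leaves the family of auxiliary Young functions $\varphi_{(l_1,l_2)}$ free, and reruns the computations of the proof of Theorem \ref{thm5.3} with the alternative choice
\[
\varphi_{(l_1,l_2)}(t)=t\log^{\,m-l_1+l_2+\varepsilon}(e+t),\qquad 0\leq l_2<l_1\leq m,
\]
in place of $t\log^{m-l_1+l_2}(e+t)\log^{1+\varepsilon}(e+\log(e+t))$. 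With this choice one has $\Phi_{l_1-l_2}\circ\varphi_{(l_1,l_2)}(t)\lesssim t\log^{m+\varepsilon}(e+t)$ directly, and the integrals defining $K_{\varphi_{(l_1,l_2)}}$ (as well as the finite sums $\beta_{m,l_1,l_2}$) are again of order $1/\varepsilon$, so the maximal function $M_{L(\log L)^{m+\varepsilon}}$ appears with a single $1/\varepsilon$ prefactor. The key point is that the freedom to choose $\varphi_{(l_1,l_2)}$ in Theorem \ref{thm5.2} lets one target the desired Orlicz maximal function from the outset rather than converting afterwards at a multiplicative cost.
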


\begin{corollary}\label{cor5.2}
Let $m$, $T$ and $\vec{b}$ be as above. If $w\in A^{\rm{weak}}_{\infty}$ $($see Section \ref{section2.4}$)$, then
\begin{equation}\label{ie5.25}
\begin{aligned}
    w\big(&\{x\in \mathbb{R}^n:|T_{\vec b}f(x)|>\lambda\}\big)\\
    &\lesssim K(w)[w]^{m}_{A^{\rm{weak}}_{\infty}}\log(e+[w]_{A^{\rm{weak}}_{\infty}})
    \int_{\mathbb{R}^n}\Phi_m\Big(\frac{\big\|\vec{b}\big\|_{\mathrm {BMO}}|f(x)|}{\lambda}\Big)Mw(x)dx.
\end{aligned}
\end{equation}
Furthermore, if $w\in A_1$, then we have
\begin{equation}\label{ie5.26}
\begin{aligned}
    w\big(&\{x\in \mathbb{R}^n:|T_{\vec b}f(x)|>\lambda\}\big)\\
    &\lesssim K(w)[w]_{A_1}[w]^{m}_{A^{\rm{weak}}_{\infty}}\log(e+[w]_{A^{\rm{weak}}_{\infty}})
    \int_{\mathbb{R}^n}\Phi_m\Big(\frac{\big\|\vec{b}\big\|_{\mathrm {BMO}}|f(x)|}{\lambda}\Big)w(x)dx,
\end{aligned}
\end{equation}
where $K(w)=\Big\{e,e^{2(\tau_n[w]_{A^{\rm{weak}}_{\infty}})^{-1}}\Big\}$, $\tau_n$ is as stated in Lemma $\ref{lem2.2}$.
\end{corollary}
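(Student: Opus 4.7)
The plan is to deduce both \eqref{ie5.25} and \eqref{ie5.26} as quantitative corollaries of Theorem \ref{thm5.3}, by controlling the Orlicz maximal operator appearing on its right-hand side via the ordinary Hardy--Littlewood maximal operator when the weight lies in $A^{\rm weak}_{\infty}$, and then optimizing the free parameter $\varepsilon$. Concretely, Theorem \ref{thm5.3} already gives, for every $0<\varepsilon<1$,
\begin{equation*}
w\bigl(\{x\in\R^n:|T_{\vec b}f(x)|>\lambda\}\bigr)\lesssim \tfrac{1}{\varepsilon}\int_{\R^n}\Phi_m\!\Bigl(\tfrac{\|\vec b\|_{\mathrm{BMO}}|f(x)|}{\lambda}\Bigr)M_{L(\log L)^m(\log\log L)^{1+\varepsilon}}w(x)\,dx,
\end{equation*}
so it suffices to establish a pointwise estimate of the form $M_{L(\log L)^m(\log\log L)^{1+\varepsilon}}w(x)\lesssim K(w)[w]_{A^{\rm weak}_{\infty}}^{m}\,g(\varepsilon,w)\,Mw(x)$ for a suitable $g$, and then calibrate $\varepsilon$ so that $\varepsilon^{-1}g(\varepsilon,w)\lesssim \log(e+[w]_{A^{\rm weak}_\infty})$.

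For the pointwise Orlicz maximal estimate I would appeal to the sharp reverse H\"older inequality for $A^{\rm weak}_{\infty}$ weights furnished by Lemma \ref{lem2.2}: on every cube $Q$,
\begin{equation*}
\Bigl(\tfrac{1}{|Q|}\int_Q w^{r_w}\Bigr)^{1/r_w}\leq K(w)\langle w\rangle_Q,\qquad r_w=1+\tfrac{1}{\tau_n[w]_{A^{\rm weak}_{\infty}}}.
\end{equation*}
For $\Phi(t)=t(\log(e+t))^m(\log\log(e^e+t))^{1+\varepsilon}$, an elementary computation yields $\Phi(t)\leq c_{\Phi,w}\,t^{r_w}$ globally for $t\geq 1$, where $c_{\Phi,w}$ is polynomial in $[w]_{A^{\rm weak}_{\infty}}$ with a mild logarithmic factor in $\varepsilon$. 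Plugging this into the Luxemburg definition of the norm and invoking the reverse H\"older inequality produces
$\|w\|_{\Phi,Q}\lesssim K(w)[w]_{A^{\rm weak}_{\infty}}^{m}\psi(\varepsilon,[w]_{A^{\rm weak}_{\infty}})\langle w\rangle_Q$
uniformly in $Q$, for a slowly growing $\psi$. Taking the supremum over cubes containing a given point $x$ transfers this to the desired pointwise control of $M_\Phi w$ by $Mw$.

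Feeding this back into Theorem \ref{thm5.3} leaves a prefactor of the form $\varepsilon^{-1}K(w)[w]_{A^{\rm weak}_{\infty}}^{m}\psi(\varepsilon,[w]_{A^{\rm weak}_{\infty}})$, which is then optimized by the canonical choice $\varepsilon\sim 1/\log(e+[w]_{A^{\rm weak}_{\infty}})$; this balances the logarithmic growth of $\psi$ against $\varepsilon^{-1}$ and yields a single logarithmic factor, proving \eqref{ie5.25}. For \eqref{ie5.26}, it is enough to note that any $w\in A_1$ satisfies $Mw(x)\leq[w]_{A_1}w(x)$ for a.e.\ $x\in\R^n$ and $[w]_{A^{\rm weak}_{\infty}}\leq c_n[w]_{A_1}$, so \eqref{ie5.26} follows at once from \eqref{ie5.25} by substituting $Mw$ with $[w]_{A_1}w$. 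The main technical obstacle is the reverse-H\"older-based comparison between $\Phi$ and $t^{r_w}$ used to derive the pointwise Orlicz bound: one has to keep the dependence of $c_{\Phi,w}$ on $[w]_{A^{\rm weak}_{\infty}}$ and $\varepsilon$ quantitatively sharp, and then carry out the $\varepsilon$-optimization cleanly, so that the extraneous logarithmic excess collapses to the single factor $\log(e+[w]_{A^{\rm weak}_{\infty}})$ displayed in the statement.
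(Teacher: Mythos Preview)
Your overall plan coincides with the paper's: bound the Orlicz maximal function on the right of the endpoint inequality by $M_{r_w}w$ via the elementary inequality $\log t\le t^\alpha/\alpha$, then invoke the sharp reverse H\"older inequality of Lemma~\ref{lem2.2} to pass to $Mw$, and finally optimize in $\varepsilon$. The deduction of \eqref{ie5.26} from \eqref{ie5.25} by $Mw\le [w]_{A_1}w$ is also exactly what the paper does.

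The one discrepancy is your starting point, and it costs you a logarithm. The paper does \emph{not} start from Theorem~\ref{thm5.3} (with the Orlicz function $t\log^m(e+t)(\log(e+\log(e+t)))^{1+\varepsilon}$); it starts from the simpler bound \eqref{ie5.24} of Corollary~\ref{cor5.1}, which already has the plain Orlicz function $t\log^{m+\varepsilon}(e+t)$. This matters quantitatively. If you compare your $\Phi(t)=t\log^m(e+t)(\log(e+\log(e+t)))^{1+\varepsilon}$ with $t^{r_w}$, the supremum of $\Phi(t)/t^{r_w}$ is attained near $t_*\approx e^{m\tau_n[w]_{A^{\rm weak}_\infty}}$, and at that scale the iterated-logarithm factor contributes $(\log(e+m\tau_n[w]_{A^{\rm weak}_\infty}))^{1+\varepsilon}$. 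Hence your $c_{\Phi,w}$ is not merely ``polynomial in $[w]_{A^{\rm weak}_\infty}$ with a mild logarithmic factor in $\varepsilon$'' but rather $c_{\Phi,w}\approx c_m[w]_{A^{\rm weak}_\infty}^{m}(\log(e+[w]_{A^{\rm weak}_\infty}))^{1+\varepsilon}$. Plugging back, the prefactor is $\varepsilon^{-1}[w]_{A^{\rm weak}_\infty}^{m}(\log(e+[w]_{A^{\rm weak}_\infty}))^{1+\varepsilon}$, and no choice of $\varepsilon\in(0,1)$ brings this below $[w]_{A^{\rm weak}_\infty}^{m}\log(e+[w]_{A^{\rm weak}_\infty})\log\log(e^e+[w]_{A^{\rm weak}_\infty})$; your suggested $\varepsilon\sim 1/\log(e+[w]_{A^{\rm weak}_\infty})$ actually gives $\approx [w]_{A^{\rm weak}_\infty}^{m}(\log(e+[w]_{A^{\rm weak}_\infty}))^2$. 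By contrast, comparing $t\log^{m+\varepsilon}(e+t)$ with $t^{r_w}$ (as in the paper's \eqref{ie5.29}) yields the clean constant $\approx K(w)[w]_{A^{\rm weak}_\infty}^{m+\varepsilon}$, and then the choice $\varepsilon=1/\log(e+[w]_{A^{\rm weak}_\infty})$ lands exactly on the single logarithmic factor announced in \eqref{ie5.25}. So the fix is simply to start from \eqref{ie5.24} rather than from Theorem~\ref{thm5.3}.
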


\begin{remark}\label{remmark5.2}
We make some comments on Corollaries \ref{cor5.1} and \ref{cor5.2}. If $m=1$, then $T_{\vec b}=[b,T]$, which means that our results cover Theorem D; when $b_i=b,i=1,\ldots,m$, Theorem 3.1 in \cite{iba} is just a special case of Corollaries \ref{cor5.1} and \ref{cor5.2}. In addition, since the $A^{\rm{weak}}_{\infty}$ weight class is a broader class of weights than the $A_{\infty}$ weight class, and it fails to meet the doubling condition (as exemplified in \cite[Example 3.2]{hor+and1}), the constant $K(w)$ in Corollary \ref{cor5.2} is indispensable.
\end{remark} 

For the quantitative weighted Coifman-Fefferman inequalities of the general commutators, we have
\begin{theorem}\label{thm5.4}
Let $0<p<\infty$, $m\in\mathbb{N}$ and $\vec{b}=(b_1,\ldots,b_m)\in \mathrm{BMO}^m$. Assume that $A,B$ are Young functions such that $A\in \mathcal{Y}(p_0,p_1)~(1\leq p_0\leq p_1<\infty)$ and $B^{-1}(t)(C^{-1}(t))^{m}\leq cA^{-1}(t)$ for $t\geq 1$, where $C(t)=e^{t}-1$. If $T$ is an $\bar{A}$-H\"{o}rmander operator, then for any $w\in A_{\infty}$,
\begin{equation}\label{ie1.1}
\int_{\Rn}|T_{\vec b}(f)(x)|^pw(x)dx\lesssim \prod_{s=1}^m\|b_s\|^p_{\rm BMO}[w]^{mp}_{A_{\infty}}[w]^{\max\{1,p\}}_{A_{\infty}}\int_{\Rn}(M_Bf(x))^pw(x)dx.
\end{equation}
\end{theorem}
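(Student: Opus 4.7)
The plan is to start from the pointwise sparse domination of Theorem~\ref{thm5.1}, which controls $|T_{\vec{b}}f(x)|$ by a finite sum (over $j$ and over $\vec{\gamma}\in\{0,1\}^m$) of the commutator sparse forms $\mathcal{A}^{\vec\gamma}_{A,\mathcal{S}_j}(\vec{b},f)$, and then estimate the $L^p(w)$-norm of each such sparse form before raising to the $p$th power.

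First, I would strip the internal oscillations $(b_s-\langle b_s\rangle_Q)$ sitting inside the Orlicz norm $\|\cdot\|_{A,Q}$. The generalized H\"older inequality in Orlicz spaces, combined with the hypothesis $B^{-1}(t)(C^{-1}(t))^{m}\leq cA^{-1}(t)$ (for $C(t)=e^{t}-1$) and the John--Nirenberg estimate $\|b_s-\langle b_s\rangle_Q\|_{C,Q}\lesssim\|b_s\|_{\mathrm{BMO}}$, yield
\begin{equation*}
\Big\|\prod_{s:\gamma_s=0}(b_s-\langle b_s\rangle_Q)\, f\Big\|_{A,Q}\lesssim \Big(\prod_{s:\gamma_s=0}\|b_s\|_{\mathrm{BMO}}\Big)\|f\|_{B,Q}.
\end{equation*}
Thus each $\mathcal{A}^{\vec\gamma}_{A,\mathcal{S}}(\vec{b},f)(x)$ is pointwise controlled by the corresponding product of $\|b_s\|_{\mathrm{BMO}}$ times the form $\sum_{Q\in\mathcal{S}}\prod_{s:\gamma_s=1}|b_s(x)-\langle b_s\rangle_Q|\,\|f\|_{B,Q}\chi_Q(x)$, with the product understood to be $1$ when $\vec{\gamma}=\vec 0$.

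Next, the remaining external oscillations $|b_s(x)-\langle b_s\rangle_Q|$ must be removed one factor at a time. I would iterate a commutator sparse-domination inequality of the form
\begin{equation*}
\Big\|\sum_{Q\in\mathcal{S}}|b(x)-\langle b\rangle_Q|\,g_Q\chi_Q\Big\|_{L^p(w)}\lesssim \|b\|_{\mathrm{BMO}}\,[w]_{A_{\infty}}\Big\|\sum_{Q\in\mathcal{S}'}g_Q\chi_Q\Big\|_{L^p(w)},
\end{equation*}
which follows from a principal-cubes decomposition with respect to $b$ and produces a new (finite union of) sparse family $\mathcal{S}'$. Applying this bound $|\vec{\gamma}|$ times peels off each factor $|b_s(x)-\langle b_s\rangle_Q|$ at the cost of $\|b_s\|_{\mathrm{BMO}}[w]_{A_{\infty}}$. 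A standard sparse-to-Orlicz-maximal bound $\|\mathcal{A}_{B,\mathcal{S}'}f\|_{L^p(w)}\lesssim [w]_{A_{\infty}}^{\max\{1,1/p\}}\|M_Bf\|_{L^p(w)}$, proved by duality and a Carleson embedding on principal cubes for $w$, terminates the argument for each $\vec{\gamma}$. Summing over the $2^m$ choices of $\vec{\gamma}$ produces the full product $\prod_{s=1}^{m}\|b_s\|_{\mathrm{BMO}}$ and the factor $[w]_{A_{\infty}}^{m}\cdot[w]_{A_{\infty}}^{\max\{1,1/p\}}$ inside the $L^p(w)$-norm; raising to the $p$th power then matches the claimed $\prod_{s}\|b_s\|_{\mathrm{BMO}}^{p}\,[w]_{A_{\infty}}^{mp}\,[w]_{A_{\infty}}^{\max\{1,p\}}$ (note that $p\cdot\max\{1,1/p\}=\max\{1,p\}$ in either regime $p\geq 1$ or $p<1$).

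The main obstacle will be the iterated principal-cubes step: after each removal of an oscillation factor the resulting family $\mathcal{S}'$ must still be a bounded union of sparse families so that the recursion can be continued, and the $A_{\infty}$-constant of $w$ must accumulate linearly rather than exponentially in $m$. Obtaining this sharp linear accumulation hinges on the Hyt\"onen--P\'erez quantitative reverse H\"older inequality and on selecting the principal cubes simultaneously with respect to both the BMO function and the sparse averages $\|f\|_{B,Q}$; this careful bookkeeping is what yields the exponent $[w]_{A_{\infty}}^{mp}$ and nothing worse.
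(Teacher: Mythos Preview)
Your reduction to the sparse forms $\mathcal{A}^{\vec\gamma}_{A,\mathcal{S}_j}(\vec{b},f)$ via Theorem~\ref{thm5.1}, and the stripping of the \emph{internal} oscillations $(b_s-\langle b_s\rangle_Q)$ inside $\|\cdot\|_{A,Q}$ by means of generalized H\"older and John--Nirenberg, matches the paper exactly. The divergence is in how you handle the \emph{external} oscillations $\prod_{s:\gamma_s=1}|b_s(x)-\langle b_s\rangle_Q|$.

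The paper does \emph{not} iterate; it removes all external oscillations in one stroke, splitting into two ranges of $p$. For $0<p\le 1$ one uses $\bigl(\sum a_Q\bigr)^p\le \sum a_Q^p$ to land on $\sum_{Q}\int_Q\prod_{s:\gamma_s=1}|b_s-\langle b_s\rangle_Q|^p\,w\,dx\cdot\|f\|_{B,Q}^p$, then applies H\"older and the quantitative weighted John--Nirenberg inequality $\bigl(\tfrac{1}{w(Q)}\int_Q|b-\langle b\rangle_Q|^q w\bigr)^{1/q}\lesssim[w]_{A_\infty}\|b\|_{\mathrm{BMO}}$ (Lemma~\ref{lem3.2}) to each factor, yielding $\prod_{s:\gamma_s=1}([w]_{A_\infty}\|b_s\|_{\mathrm{BMO}})^p\,w(Q)$; a single Carleson embedding with Carleson constant $[w]_{A_\infty}$ finishes this case. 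For $p>1$ one dualizes against $g\in L^{p'}(w)$, applies the weighted multilinear H\"older inequality (Lemma~\ref{lem3.1}) together with $\|b-\langle b\rangle_Q\|_{\exp L(w),Q}\lesssim[w]_{A_\infty}\|b\|_{\mathrm{BMO}}$ to absorb all external oscillations at once, and again a single Carleson embedding plus boundedness of the dyadic weighted maximal function on $L^{p'}(w)$ closes the argument.

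Your proposed iterative bound
\[
\Big\|\sum_{Q\in\mathcal{S}}|b-\langle b\rangle_Q|\,g_Q\chi_Q\Big\|_{L^p(w)}\lesssim \|b\|_{\mathrm{BMO}}[w]_{A_\infty}\Big\|\sum_{Q\in\mathcal{S}'}g_Q\chi_Q\Big\|_{L^p(w)}
\]
is not a standard lemma and is under-specified as written: the coefficients $g_Q$ are indexed by the \emph{original} family $\mathcal{S}$, so once $\mathcal{S}$ is replaced by a different $\mathcal{S}'$ it is unclear what $g_Q$ means on the right-hand side, and your recursion (which must further peel an oscillation indexed by \emph{the same original} cube) cannot proceed as stated. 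Principal-cubes arguments of this flavour typically yield either a pointwise bound (no $[w]_{A_\infty}$ factor, hence the wrong exponent at the end) or a bound on the same family with modified coefficients, not a transfer to a new sparse family with the old coefficients. Unless you can formulate and prove a precise version of this step, the argument has a gap; the paper's direct route via weighted John--Nirenberg and $\exp L(w)$ estimates avoids this difficulty entirely.
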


\begin{remark}\label{rem5.4}
It is clear that when $b_i=b,i=1,\ldots,m$, Theorem \ref{thm5.4} coincides with \cite[Theorem 2.3]{iba}. In addition, in order to overcome the dependence on the $A_{\infty}$ constant caused by extrapolation, we employ sparse domination to obtain the quantitative estimate for the full range $0<p<\infty$ with a precise control of the $A_{\infty}$ constant. This not only refines \cite[Theorem 2.3]{iba} by extending the range of $p$, but also enhances the results in \cite{hor+lor2} by relaxing the kernel condition and providing more precise quantitative estimates for the weight constants.
\end{remark}

As a corollary of Theorem \ref{thm5.4}, we obtain the following weighted Coifman-Fefferman inequalities for  $L^r$-H\"{o}rmander operators and their general commutators. 

\begin{corollary}\label{cor5.6}
Let $m$, $p$ and $\vec{b}$ be as above. If $1<r<\infty$ and $T$ is a singular integral operator with kernel satisfying $L^r$-H\"{o}rmander condition, then for any $0<\varepsilon<1$ and $w\in A_{\infty}$,
\begin{equation}\label{ie10}
\int_{\Rn}|T_{\vec b}(f)(x)|^pw(x)dx\lesssim \prod_{s=1}^m\|b_s\|^p_{\rm BMO}[w]^{mp}_{A_{\infty}}[w]^{\max\{1,p\}}_{A_{\infty}}\int_{\Rn}(M_{r'+\varepsilon}f(x))^pw(x)dx.
\end{equation}
\end{corollary}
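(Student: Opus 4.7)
The plan is to derive Corollary~\ref{cor5.6} as an immediate application of Theorem~\ref{thm5.4} through a judicious choice of the pair of Young functions $(A,B)$. Specifically, take $A(t)=t^{r'}$ and $B(t)=t^{r'+\varepsilon}$. The first choice satisfies $A\in\mathcal{Y}(r',r')$ trivially, and a standard computation gives $\bar{A}(t)\sim t^{r}$; hence, for $g=(K(x,\cdot)-K(z,\cdot))\chi_{2^{k}Q\setminus 2^{k-1}Q}$, the Luxemburg norm entering Definition~\ref{def5.2} reduces to the normalized $L^{r}$ average, $\|g\|_{\bar{A},2^{k}Q}\sim\bigl(\tfrac{1}{|2^{k}Q|}\int_{2^{k}Q}|g|^{r}dy\bigr)^{1/r}$. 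This identification shows that the $L^{\bar A}$-H\"ormander condition of Definition~\ref{def5.2} coincides, up to universal constants, with the $L^{r}$-H\"ormander condition of the hypothesis, so that $T$ qualifies as an $\bar{A}$-H\"ormander operator.

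It remains to verify the Young function inequality $B^{-1}(t)(C^{-1}(t))^{m}\leq c\,A^{-1}(t)$ for $t\geq 1$, with $C(t)=e^{t}-1$. Since $A^{-1}(t)=t^{1/r'}$, $B^{-1}(t)=t^{1/(r'+\varepsilon)}$ and $C^{-1}(t)\sim\log(e+t)$, this reduces to the elementary estimate $\log^{m}(e+t)\lesssim t^{\delta}$ for $t\geq 1$, with $\delta=\frac{1}{r'}-\frac{1}{r'+\varepsilon}=\frac{\varepsilon}{r'(r'+\varepsilon)}>0$, which holds with a constant depending only on $r$, $\varepsilon$ and $m$.

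With the hypotheses of Theorem~\ref{thm5.4} now verified, a direct application yields
\[
\int_{\mathbb{R}^{n}}|T_{\vec{b}}f(x)|^{p}w(x)\,dx\lesssim\prod_{s=1}^{m}\|b_{s}\|_{\mathrm{BMO}}^{p}\,[w]_{A_{\infty}}^{mp}\,[w]_{A_{\infty}}^{\max\{1,p\}}\int_{\mathbb{R}^{n}}(M_{B}f(x))^{p}w(x)\,dx,
\]
and the standard identification $M_{B}f(x)=M_{r'+\varepsilon}f(x)$ for the power Young function $B(t)=t^{r'+\varepsilon}$ produces~\eqref{ie10}. The argument is essentially a translation between the Luxemburg-norm framework of Theorem~\ref{thm5.4} and the classical $L^{r}$-H\"ormander formulation; no substantial obstacle is anticipated, the only care point being the bookkeeping of normalization constants in the dyadic-annulus averages when identifying the two formulations.
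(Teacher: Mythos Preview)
Your proposal is correct and follows essentially the same approach as the paper: both identify the $L^r$-H\"ormander condition with the $L^{\bar A}$-H\"ormander condition for $A(t)=t^{r'}$ and then invoke Theorem~\ref{thm5.4}. The only difference is cosmetic: the paper chooses $B(t)=t^{r'}\log^{r'm}(e+t)$ and then appeals to the pointwise bound $\|f\|_{L^{r'}(\log L)^{r'm},Q}\lesssim \|f\|_{L^{r'+\varepsilon},Q}$ to pass from $M_B$ to $M_{r'+\varepsilon}$, whereas your direct choice $B(t)=t^{r'+\varepsilon}$ absorbs that step into the verification of $B^{-1}(t)(C^{-1}(t))^m\le cA^{-1}(t)$ and is marginally cleaner.
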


\begin{corollary}\label{cor5.7}
Let $1<r<\infty$ and $T$ be a singular integral operator with kernel satisfying $L^r$-H\"{o}rmander condition, then for any $0<p<\infty$ and $w\in A_{\infty}$,
\begin{equation*}
\int_{\Rn}|T(f)(x)|^pw(x)dx\lesssim [w]^{\max\{1,p\}}_{A_{\infty}}\int_{\Rn}(M_{r'}f(x))^pw(x)dx.
\end{equation*}
Furthermore, if $r'\leq p< \infty$, then for $w\in A_{p/r'}$, $T$ is bounded on $L^p(w)$ and 
\begin{equation*}
\int_{\Rn}|T(f)(x)|^pw(x)dx\lesssim [w]^{p}_{A_{\infty}}[w]^{\frac{p}{p-r'}}_{A_{p/r'}}\int_{\Rn}|f(x)|^pw(x)dx.
\end{equation*}
\end{corollary}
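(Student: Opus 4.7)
My plan is to deduce Corollary~\ref{cor5.7} directly from Theorem~\ref{thm5.4}, by specialising the Young function to a power and matching the $L^r$-H\"{o}rmander kernel hypothesis against the Orlicz framework.

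For the first inequality, I would apply Theorem~\ref{thm5.4} in the non-commutator case $m=0$ (or equivalently the corresponding specialisation of the sparse domination in Theorem~\ref{thm5.1}), so that $T_{\vec b}=T$ and the $\mathrm{BMO}$ product drops out. Taking $A(t)=t^{r'}$, its Young conjugate satisfies $\bar A(t)\simeq t^{r}$, which makes the hypothesis that $T$ be an $\bar A$-H\"{o}rmander operator (Definition~\ref{def5.2}) coincide exactly with the classical $L^{r}$-H\"{o}rmander condition of Definition~\ref{def50}. The inclusion $A\in\mathcal{Y}(r',r')$ is immediate. With $m=0$, the compatibility condition $B^{-1}(t)(C^{-1}(t))^{m}\le cA^{-1}(t)$ collapses to $B^{-1}(t)\le cA^{-1}(t)$, which I satisfy by taking $B=A$, so that $M_{B}\simeq M_{r'}$. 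Theorem~\ref{thm5.4} then delivers the first inequality.

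For the strong-type weighted bound under $w\in A_{p/r'}$ with $r'\le p<\infty$, I would reduce to the classical maximal operator via the pointwise identity $M_{r'}f(x)=\bigl(M(|f|^{r'})(x)\bigr)^{1/r'}$. Since $p/r'>1$ and $w\in A_{p/r'}$, Buckley's sharp estimate
\[
\|Mg\|_{L^{p/r'}(w)}\le c\,[w]_{A_{p/r'}}^{1/(p/r'-1)}\,\|g\|_{L^{p/r'}(w)},
\]
applied with $g=|f|^{r'}$ and raised to the $(p/r')$-th power, produces the factor $[w]_{A_{p/r'}}^{p/(p-r')}$ after simplifying $(p/r')/(p/r'-1)=p/(p-r')$. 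Inserting this bound into the first inequality, and using $\max\{1,p\}=p$ because $p\ge r'>1$, yields the announced estimate; the finiteness of the right-hand side for $f\in L^{p}(w)$ then implies the $L^{p}(w)$-boundedness of $T$.

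I do not expect a genuine obstacle here: the heavy machinery (pointwise sparse domination and the quantitative sparse-operator bound) has already been carried out in Theorems~\ref{thm5.1} and~\ref{thm5.4}, and Buckley's theorem is classical. The one point requiring attention is that, unlike the commutator situation of Corollary~\ref{cor5.6}, where the factor $(C^{-1}(t))^{m}\simeq(\log t)^{m}$ forces one to enlarge the Orlicz function from $t^{r'}$ to $t^{r'+\varepsilon}$, the absence of any $\mathrm{BMO}$ functions here removes this logarithmic correction entirely and lets one recover the sharp Orlicz maximal $M_{r'}$. Verifying that Theorem~\ref{thm5.4} indeed covers (or extends verbatim to) the case $m=0$ is precisely what unlocks the sharpening from $M_{r'+\varepsilon}$ in Corollary~\ref{cor5.6} to $M_{r'}$ in the present corollary.
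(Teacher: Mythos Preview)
Your proposal is correct and follows exactly the intended line: the paper treats Corollary~\ref{cor5.7} as the $m=0$ specialisation of the argument for Corollary~\ref{cor5.6} (take $A(t)\simeq t^{r'}$, $B(t)=t^{r'}$ in Theorem~\ref{thm5.4}, so that the logarithmic factor disappears and $M_B=M_{r'}$), together with Buckley's sharp bound for $M$ applied to $|f|^{r'}$ in $L^{p/r'}(w)$ for the second part. The only point worth being explicit about is the one you already flag: Theorem~\ref{thm5.4} is stated for $m\in\mathbb{N}$ with a nonempty vector $\vec b$, but the underlying sparse domination of Theorem~\ref{thm5.1} (or rather its $m=0$ antecedent from \cite{iba}) and the Carleson embedding step in the proof of Theorem~\ref{thm5.4} go through verbatim when there are no $\mathrm{BMO}$ functions, yielding the factor $[w]_{A_\infty}^{\max\{1,p\}}$ alone.
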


\begin{remark}
Corollary \ref{cor5.7} gives the weighted norm inequalities for $L^r$-H\"{o}rmander operators with an accurate weighted constant, which improves \cite[Theorem 2.2]{hor+mar}.
\end{remark}

As another corollary of Theorem \ref{thm5.4}, we have the following quantitative weighted estimates for the iterated commutators of $\omega$-Calder\'{o}n-Zygmund operators.

\begin{corollary}\label{cor5.4}
Let $0<p<\infty$, $m\in\mathbb{N}$ and $b\in \mathrm{BMO}$. Assume $T$ is an $\omega$-Calder\'{o}n-Zygmund operator with $\omega$ satisfying the Dini condition: $\int_{0}^1\omega(t)\frac{dt}{t}<\infty$. Then for any $w\in A_{\infty}$,
\begin{equation}\label{ie16}
\int_{\Rn}|T^m_{b}f(x)|^pw(x)dx\lesssim \|b\|^{mp}_{\rm BMO}[w]^{mp}_{A_{\infty}}[w]^{\max\{1,p\}}_{A_{\infty}}\int_{\Rn}(M^{m+1}f(x))^pw(x)dx.
\end{equation}
\end{corollary}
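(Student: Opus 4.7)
The plan is to derive Corollary \ref{cor5.4} as a direct specialization of Theorem \ref{thm5.4} to the iterated commutator. The first step is to set $\vec{b}=(b,b,\ldots,b)$ with $m$ identical entries, so that $T_{\vec b}=T^m_b$ and $\prod_{s=1}^m\|b_s\|_{\mathrm{BMO}}^p=\|b\|_{\mathrm{BMO}}^{mp}$. What remains is to choose the Young functions $A$ and $B$ so that: (i) $T$ is an $\bar A$-H\"ormander operator under the Dini hypothesis, (ii) the admissibility condition $B^{-1}(t)(C^{-1}(t))^m\leq cA^{-1}(t)$ holds, and (iii) the Orlicz maximal operator $M_B$ is pointwise dominated by $M^{m+1}$.

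I would take $A(t)=t$, which trivially belongs to $\mathcal{Y}(1,1)$, and $B(t)=t\log^m(e+t)$. For this choice one has $B^{-1}(t)\approx t/\log^m(e+t)$ for large $t$ and $C^{-1}(t)=\log(1+t)\lesssim\log(e+t)$, so that
\[
B^{-1}(t)(C^{-1}(t))^m\lesssim \frac{t}{\log^m(e+t)}\cdot\log^m(e+t)=t\lesssim A^{-1}(t),
\]
which is the required compatibility. With $A(t)=t$, the Luxemburg norm $\|\cdot\|_{\bar A,Q}$ reduces to the essential supremum on $Q$, so the $\bar A$-H\"ormander condition in Definition \ref{def5.2} becomes the classical non-convolution $L^\infty$-H\"ormander condition.

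The most substantive step is the verification that a Dini $\omega$-Calder\'on--Zygmund operator satisfies this $L^\infty$-H\"ormander condition. Fix a cube $Q$, $x,z\in\frac12 Q$, and $y\in 2^kQ\setminus 2^{k-1}Q$ for $k\geq 1$. Then $|x-z|\lesssim \ell(Q)$ and $|x-y|\sim 2^k\ell(Q)$, so the standard smoothness estimate for $\omega$-kernels gives
\[
|K(x,y)-K(z,y)|\lesssim \omega(c\,2^{-k})\,(2^k\ell(Q))^{-n}.
\]
Multiplying by $|2^kQ|\sim 2^{kn}|Q|$ and summing in $k$, one obtains $\sum_{k=1}^\infty\omega(c\,2^{-k})\lesssim \int_0^1\omega(t)\,\frac{dt}{t}<\infty$ by the Dini condition. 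The symmetric estimate for the second kernel variable follows identically from the corresponding smoothness in the first slot of $K(x,y)$, which is a standing assumption for $\omega$-Calder\'on--Zygmund operators. Hence $H_{\bar A}<\infty$.

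Finally, the classical Orlicz equivalence of P\'erez, $M_Bf(x)\lesssim M^{m+1}f(x)$ pointwise for $B(t)=t\log^m(e+t)$, transfers the right-hand side of Theorem \ref{thm5.4} into the stated $M^{m+1}$ bound:
\[
\int_{\Rn}|T_b^m f|^p w\,dx\lesssim \|b\|^{mp}_{\mathrm{BMO}}[w]^{mp}_{A_\infty}[w]^{\max\{1,p\}}_{A_\infty}\int_{\Rn}(M_B f)^p w\,dx\lesssim \|b\|^{mp}_{\mathrm{BMO}}[w]^{mp}_{A_\infty}[w]^{\max\{1,p\}}_{A_\infty}\int_{\Rn}(M^{m+1}f)^pw\,dx,
\]
which is the claim. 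The main (mild) obstacle is the endpoint nature of $A(t)=t$: this function is only weakly Young (linear growth, not strictly convex at infinity), so care is needed in interpreting $\bar A$ and the associated Luxemburg norm. This is handled either by adopting the conventional identification $\|\cdot\|_{\bar A,Q}=\|\cdot\|_{L^\infty(Q)}$ or by running the argument with $A_\varepsilon(t)=t+\varepsilon t^{1+\varepsilon}$ and letting $\varepsilon\to 0^+$, noting that the constants in Theorem \ref{thm5.4} are stable as $\varepsilon\to 0^+$ because all relevant $\mathcal{Y}(p_0,p_1)$ parameters remain bounded.
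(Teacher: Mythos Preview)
Your proposal is correct and follows essentially the same route as the paper: specialize Theorem \ref{thm5.4} with $\vec b=(b,\ldots,b)$, take $A(t)=t$ (so that the Dini condition yields the $L^\infty$-H\"ormander condition) and $B(t)=t\log^m(e+t)$, check the compatibility $B^{-1}(t)(C^{-1}(t))^m\lesssim A^{-1}(t)$, and finish via $M_{L(\log L)^m}\approx M^{m+1}$. The paper's proof is a two-line version of exactly this; your explicit verification of $K\in\mathcal{H}_{\infty}$ and the remark on the endpoint nature of $A(t)=t$ are fine elaborations but not additional ideas.
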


 In terms of the dependence on the weight constant, Corollary \ref{cor5.4} is consistent with Theorem C when $p>1$, but when $0<p<1$, Theorem C outperforms our result. This phenomenon arises from the insufficient smoothness of the operators under consideration. To address this issue, we impose more stronger smoothness conditions on the kernels of operators and obtain the following theorem.

\begin{theorem}\label{thm5.5}
 Let $0<p<\infty$, $m\in\mathbb{N}$ and $\vec{b}=(b_1,\ldots,b_m)\in \mathrm{BMO}^m$. Assume that $A,B$ are Young functions such that $B^{-1}(t)(C^{-1}(t))^m\leq cA^{-1}(t)$ for $t\geq 1$, where $C(t)=e^{t}-1$. If $T$ is an $\bar{A}$-H\"{o}rmander operator with kernel $K\in \mathcal{H}_{\bar{A},m}$ $($see Definition \ref{def7.1}$)$. Then for $w\in A_{\infty}$ and any bounded function f with compact support, it holds that
\begin{equation}\label{ie1.3}
\|T_{\vec{b}}f\|_{L^p(w)}\leq C\prod_{i=1}^{m}\|b_i\|_{\rm{BMO}}
[w]^{m+1}_{A_{\infty}}\|M_Bf\|_{L^p(w)}.
\end{equation}
\end{theorem}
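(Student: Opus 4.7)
The plan is to prove \eqref{ie1.3} via an iterated sharp maximal function argument rather than by sparse domination. The motivation is that for $A_\infty$ weights there is the sharp maximal inequality
\[
\|g\|_{L^p(w)}\leq c_{n,p}[w]_{A_\infty}\|M^{\#}_\delta g\|_{L^p(w)},\qquad 0<p<\infty,\ 0<\delta<1,
\]
which is available with a linear dependence on $[w]_{A_\infty}$ for the full range of $p$. Applying this inequality iteratively, once per commutator level, produces precisely the factor $[w]^{m+1}_{A_\infty}$ in \eqref{ie1.3} while avoiding the extra $p$-dependence appearing in Theorem \ref{thm5.4}. The cost, as advertised, is to strengthen the kernel assumption from $\mathcal{H}_{\bar{A}}$ to $\mathcal{H}_{\bar{A},m}$, which is tailored to handle the extra logarithmic factors produced by commuting with $m$ BMO functions.

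The heart of the argument is a pointwise sharp maximal function estimate of the form
\[
M^{\#}_\delta(T_{\vec{b}}f)(x)\lesssim \prod_{i=1}^m\|b_i\|_{\mathrm{BMO}}\,M_B f(x)+\sum_{\sigma\subsetneq\{1,\ldots,m\}}\Big(\prod_{i\notin\sigma}\|b_i\|_{\mathrm{BMO}}\Big)M_{L\log L}\bigl(T_{\vec{b}_\sigma}f\bigr)(x),
\]
where $T_{\vec{b}_\sigma}$ denotes the commutator built from the sub-family $(b_i)_{i\in\sigma}$. To prove it, I would fix a cube $Q$ containing $x$ and use the standard expansion
\[
\prod_{i=1}^m(b_i(y)-b_i(z))=\sum_{\sigma\subseteq\{1,\ldots,m\}}(-1)^{|\sigma|}\prod_{i\in\sigma}(b_i(z)-\langle b_i\rangle_Q)\prod_{j\notin\sigma}(b_j(y)-\langle b_j\rangle_Q),
\]
so that $T_{\vec{b}}f$ splits into $2^m$ terms, each a product of BMO differences evaluated at $z$ times $T_{\vec{b}_{\sigma^c}}$ applied to $f$ times the remaining BMO differences. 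For the diagonal term $\sigma=\emptyset$, I would decompose $f=f\chi_{2Q}+f\chi_{(2Q)^c}$: the local piece is handled by an Orlicz H\"{o}lder argument together with the $L^q$ boundedness of $T$, producing $\prod\|b_i\|_{\mathrm{BMO}}M_B f(x)$; the non-local piece is controlled via the condition $K\in\mathcal{H}_{\bar{A},m}$, the generalized Orlicz H\"{o}lder inequality, and the John--Nirenberg bound $\|b_i-\langle b_i\rangle_Q\|_{\exp L,Q}\lesssim \|b_i\|_{\mathrm{BMO}}$, so that the $m$ logarithmic factors produced by the BMO differences are absorbed by $(C^{-1})^m$ and the hypothesis $B^{-1}(t)(C^{-1}(t))^m\leq cA^{-1}(t)$ collapses the remaining bound back into $M_B f(x)$. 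The off-diagonal terms yield the lower-order pieces $M_{L\log L}T_{\vec{b}_\sigma}f(x)$, which will feed the induction.

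With this pointwise estimate at hand, I conclude by induction on $m$. The base case $m=0$ is the Coifman--Fefferman inequality for $T$ itself, which is Theorem \ref{thm5.4} specialized to $m=0$ and gives $\|Tf\|_{L^p(w)}\lesssim [w]_{A_\infty}\|M_B f\|_{L^p(w)}$. For the inductive step, the sharp maximal inequality converts the pointwise bound into
\[
\|T_{\vec{b}}f\|_{L^p(w)}\lesssim [w]_{A_\infty}\prod_{i=1}^m\|b_i\|_{\mathrm{BMO}}\|M_B f\|_{L^p(w)}+[w]_{A_\infty}\sum_{\sigma\subsetneq\{1,\ldots,m\}}\prod_{i\notin\sigma}\|b_i\|_{\mathrm{BMO}}\|M_{L\log L}T_{\vec{b}_\sigma}f\|_{L^p(w)},
\]
and since $M_{L\log L}$ is bounded on $L^p(w)$ for $w\in A_\infty$ while each $T_{\vec{b}_\sigma}f$ satisfies by induction a bound of type \eqref{ie1.3} with weight constant $[w]^{|\sigma|+1}_{A_\infty}\leq [w]^{m}_{A_\infty}$, the final estimate picks up exactly one further factor of $[w]_{A_\infty}$, yielding $[w]^{m+1}_{A_\infty}$.

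The main obstacle will be carrying out the pointwise sharp maximal estimate under $\mathcal{H}_{\bar{A},m}$: one must split the annular difference $T_{\vec{b}_{\sigma^c}}(gf\chi_{(2Q)^c})(y)-T_{\vec{b}_{\sigma^c}}(gf\chi_{(2Q)^c})(z)$ into dyadic shells, apply the Orlicz kernel regularity in each shell, and then pair the remaining BMO differences against the kernel via the generalized Orlicz H\"{o}lder inequality. Tracking the Young-function arithmetic so that the $m$ logarithmic factors produced by BMO are exactly absorbed by $(C^{-1})^m$ in the hypothesis $B^{-1}(t)(C^{-1}(t))^m\leq cA^{-1}(t)$ is the delicate technical point that the specifically tailored definition of $\mathcal{H}_{\bar{A},m}$ is designed to accommodate.
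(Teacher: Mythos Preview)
Your overall strategy---a pointwise sharp maximal estimate followed by induction on $m$, picking up one factor of $[w]_{A_\infty}$ at each step via the Fefferman--Stein inequality (Lemma~\ref{lem2.4})---is exactly the paper's approach. The pointwise bound you sketch is essentially Lemma~\ref{lem6.1}, and your handling of the diagonal term (local piece via weak $(1,1)$ and Orlicz H\"older, non-local piece via $\mathcal{H}_{\bar A,m}$ to absorb the $m$ logarithmic factors) mirrors the paper's computations \eqref{ie3}--\eqref{ie6}.

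However, there is a genuine gap in the inductive step. You claim that ``$M_{L\log L}$ is bounded on $L^p(w)$ for $w\in A_\infty$,'' and use this to pass from $\|M_{L\log L}(T_{\vec b_\sigma}f)\|_{L^p(w)}$ to $\|T_{\vec b_\sigma}f\|_{L^p(w)}$. This is false: $M_{L\log L}\approx M^2$, and already $M$ is bounded on $L^p(w)$ only when $w\in A_p$, not for arbitrary $w\in A_\infty$. So the off-diagonal terms in your pointwise estimate cannot be closed this way.

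The paper avoids this by placing $M_\varepsilon$ rather than $M_{L\log L}$ on the lower-order commutators in the pointwise bound (see Lemma~\ref{lem6.1}): for term $III$ one uses ordinary H\"older with exponents $\delta_i$ and $\varepsilon$ satisfying $\sum_{i\in\sigma'}1/\delta_i+1/\varepsilon=1/\delta$, the BMO factors being handled by John--Nirenberg in $L^{\delta_i}$, so that the commutator sits inside $M_\varepsilon$ with $0<\delta<\varepsilon<1$. Then in the induction one invokes Lemma~\ref{lem2.4} directly, $\|M_{\varepsilon}(T_{\vec b_\sigma}f)\|_{L^p(w)}\lesssim[w]_{A_\infty}\|M^{\#}_{\delta(\sigma)}(T_{\vec b_\sigma}f)\|_{L^p(w)}$, which is valid for all $w\in A_\infty$ and all $0<p<\infty$. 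This is precisely what makes the argument close; your version with $M_{L\log L}$ does not.
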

\begin{remark}
We now make some comments on Theorem \ref{thm5.5}. Since the quantitative weighted estimate of $T_{\vec b}$ is our primary interest, we do not track the dependence on $p$, but it can be obtained according to our proof procedure; Moreover, compared to Theorem \ref{thm5.4}, Theorem \ref{thm5.5} provides a more precise $A_{\infty}$ constant independently of $p$, although it has the additional restriction $K\in \mathcal{H}_{\bar{A},m}$; Furthermore, if $b_i=b,i=1,\ldots,m$, then
$$\|T^m_{b}f\|_{L^p(w)}\leq C\|b\|^m_{\rm{BMO}}
[w]^{m+1}_{A_{\infty}}\|M_Bf\|_{L^p(w)},$$
which is the quantitative version of \cite[Theorem 3.3 (a)]{hor+lor2}.
\end{remark}

\begin{corollary}\label{cor5.5}
Let $0<p<\infty$, $m\in\mathbb{N}$ and $b\in \mathrm{BMO}$. Assume that $T$ is an $\omega$-Calder\'{o}n-Zygmund operator with $\omega$ satisfying the $\log$-Dini condition:
\begin{equation}\label{ie1.4}
\int_{0}^1\frac{\omega(t)}{t}\Big(1+\log \frac{1}{t}\Big)^{m}dt<\infty.
\end{equation}
If $w\in A_{\infty}$, then for any bounded function f with compact support,
\begin{equation}\label{ie1.5}
\|T_{\vec b}f\|_{L^p(w)}\leq C \prod_{i=1}^{m}\|b_i\|_{\rm BMO}[w]^{m+1}_{A_{\infty}}\|M^{m+1}f\|_{L^p(w)}.
\end{equation}
\end{corollary}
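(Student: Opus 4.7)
The plan is to deduce Corollary \ref{cor5.5} as a direct consequence of Theorem \ref{thm5.5} by choosing the Young functions carefully and then verifying the kernel hypothesis using the log-Dini condition \eqref{ie1.4}. Concretely, I would take $A(t)\approx t$ on $[1,\infty)$, so that the complementary function $\bar A$ corresponds (up to equivalence) to the $L^\infty$-norm, and $B(t)=t\log^m(e+t)$, which makes $M_B$ pointwise comparable to the Orlicz maximal operator $M_{L(\log L)^m}$ and hence to $M^{m+1}$ (a standard fact; see the discussion around Theorem A). Trivially $A\in\mathcal{Y}(1,1)$.

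The compatibility inequality $B^{-1}(t)\bigl(C^{-1}(t)\bigr)^m\leq c\,A^{-1}(t)$ for $t\geq1$ is then immediate: $B^{-1}(t)\approx t/\log^m(e+t)$, $C^{-1}(t)\approx\log(1+t)$, and $A^{-1}(t)\approx t$, so both sides are of order $t$. Thus the algebraic hypotheses of Theorem \ref{thm5.5} are automatic.

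The main obstacle is to show $K\in\mathcal{H}_{\bar A,m}$. For the $\omega$-Calder\'on--Zygmund kernel, the standard smoothness estimate gives
\begin{equation*}
|K(x,y)-K(z,y)|\lesssim \frac{1}{|x-y|^n}\,\omega\!\left(\frac{|x-z|}{|x-y|}\right),\qquad |x-z|\leq \tfrac12|x-y|.
\end{equation*}
For $x,z\in\tfrac12 Q$ and $y\in 2^kQ\setminus 2^{k-1}Q$ this is bounded by $\omega(c\,2^{-k})/(2^k\ell(Q))^n$. Since $\bar A$ realizes the $L^\infty$-norm, the Orlicz norm appearing in Definition \ref{def7.1} reduces to this essential supremum, and the defining sum of $\mathcal{H}_{\bar A,m}$ (which carries an extra $k^m$-type factor compared with plain $\mathcal{H}_\infty$) becomes
\begin{equation*}
\sum_{k=1}^{\infty} k^m\,|2^kQ|\cdot \frac{\omega(c\,2^{-k})}{(2^k\ell(Q))^n}\;\lesssim\; \sum_{k=1}^{\infty} k^m\,\omega(2^{-k})\;\lesssim\; \int_0^1 \omega(t)\Bigl(1+\log\tfrac{1}{t}\Bigr)^m\frac{dt}{t},
\end{equation*}
which is finite exactly by the log-Dini hypothesis \eqref{ie1.4}. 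The analogous estimate for $|K(y,x)-K(y,z)|$ is identical by symmetry, so $H_{K,\bar A}<\infty$ and $K\in\mathcal{H}_{\bar A,m}$.

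With these verifications, Theorem \ref{thm5.5} applied to $T_{\vec b}$ yields
\begin{equation*}
\|T_{\vec b}f\|_{L^p(w)}\;\lesssim\;\prod_{i=1}^{m}\|b_i\|_{\mathrm{BMO}}\,[w]_{A_\infty}^{m+1}\,\|M_B f\|_{L^p(w)},
\end{equation*}
and the pointwise domination $M_B f\lesssim M^{m+1}f$ gives \eqref{ie1.5}. The only delicate point is the precise matching between the discretization of the log-Dini integral and the exact form of the $\mathcal{H}_{\bar A,m}$ condition in Definition \ref{def7.1}; everything else is a book-keeping application of Theorem \ref{thm5.5}.
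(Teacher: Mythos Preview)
Your proposal is correct and is exactly the intended derivation. The paper does not spell out a proof of this corollary; it is meant as an immediate consequence of Theorem~\ref{thm5.5}, following the same template as the proof of Corollary~\ref{cor5.4} (choose $A(t)=t$, $B(t)=t\log^m(e+t)$, use $M_B\approx M^{m+1}$), with the one extra verification---which you carried out---that the log-Dini condition \eqref{ie1.4} yields $K\in\mathcal{H}_{\bar A,m}$ via the dyadic summation $\sum_k k^m\omega(c\,2^{-k})\lesssim\int_0^1\omega(t)(1+\log\tfrac1t)^m\,\tfrac{dt}{t}$. (Note that Definition~\ref{def7.1} as printed has the factor $m^k$, but the actual usage in the proof of Lemma~\ref{lem6.1} is the polynomial factor $j^m$, which is what you used; this is almost certainly a typo in the definition.) One minor remark: the check $A\in\mathcal{Y}(1,1)$ is unnecessary here since Theorem~\ref{thm5.5} does not carry that hypothesis.
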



The organization of this article is as follows: In Section \ref{sec2}, we introduce some basic definitions and present some known facts as well as lemmata needed throughout the rest of this paper. Section \ref{sec3} contains the proof of pointwise sparse domination \eqref{ie0.1} for the general commutators, which plays a fundamental role in our analysis. The weak endpoint type  estimates with arbitrary weights are proved in Section \ref{sec4} and the purpose of Section \ref{sec5} is to prove Theorem \ref{thm5.3}. Section \ref{sec6} is devoted to establishing the quantitative weighted Coifman-Fefferman inequality based on the sparse domination. The proof of Theorem \ref{thm5.5} is presented in Section \ref{sec7}. Finally, some applications, including Calder\'{o}n commutators, homogeneous singular operators and Fourier multipliers, will be given in Section \ref{sec8}.
\vspace{0.1cm}

Throughout this paper, we will use $C$ or $c$ to denote a positive constant, which is independent of the main parameters and may vary at each occurrence. For a cube $Q$ and $a>0$, $aQ$ denotes the cube with center as $Q$ and whose side length is $a$ times that of $Q$. Given a function $f$ and a cube $Q$, $\langle f \rangle_Q$ denotes the average of $f$ on $Q$, that is, $\langle f \rangle_Q=\frac{1}{|Q|}\int_{Q}f(x)dx$. For a fixed $p$ with $1\leq p<\infty$, $p'$ denotes the dual exponent of $p$, that is, $p'=p/(p-1)$. For a multi-index $\alpha=(\alpha_1,\ldots,\alpha_n)\in \mathbb{N}^n$, $|\alpha|=\sum_{i=1}^n\alpha_i$. Besides, the nation $A\lesssim B$, we mean that $A \leq CB$ for some constant $C > 0$ and $A \approx B$ stands for $A\lesssim B\lesssim A$.

\section{Preliminary}\label{sec2}
In this section, we present some definitions,  fundamental facts and lemmas which will be used later.

\subsection{$\omega$-Calder\'{o}n-Zygmund operators}
Let $\omega:[0,1]\rightarrow [0,\infty)$ be a modulus of continuity, that is, a continuous, increasing, submultiplicative function with $\omega(0)=0$.

Recall that $T$ is called an $\omega$-Calder\'{o}n-Zygmund operator if it 
is linear, bounded on $L^2$ and admits the following representation
$$Tf(x)=\int_{\Rn}K(x,y)f(y)dy,\,\,x\notin \supp f,$$
where $f\in L^1_{\rm{loc}}(\Rn)$ and $K$, defined away from the diagonal $x=y$ in $\Rn$, is a locally integrable function, satisfying the size condition
$$|K(x,y)|\leq \frac{c_{K}}{|x-y|^n},\,\,x\neq y,$$
and a smoothness condition
$$|K(x,y)-K(x',y)|+|K(y,x)-K(y,x')|\leq \omega\Big(\frac{|x-x'|}{|x-y|}\Big)\frac{1}{|x-y|^n},$$
for $|x-y|>2|x-x'|$.
\subsection{Sparse family}
In this subsection, we will introduce a very useful tool named dyadic calculus, which was systematically studied in \cite{hor+ler18}.

Given a cube $Q\subset \Rn$, let $\mathcal{D}(Q)$ denote the set of all dyadic cubes with respect to $Q$, namely, the cubes from $\mathcal{D}(Q)$ are generated by repeating subdivision of $Q$ and its descendants in $2^n$ cubes with the same length.

\begin{definition}
A family of cubes $\mathcal{D}$ is called to be a dyadic lattice if it satisfies the following properties:
\begin{enumerate}
  \item if $Q\in \mathcal{D}$, then every descendant of $Q$ is in $\mathcal{D}$ as well; 
  \item for every two cubes $Q_1, Q_2 \in \mathcal{D}$, there exists a common ancestor $Q\in \mathcal{D}$ such that $Q_1, Q_2 \in \mathcal{D}(Q)$;
  \item for every compact set $K \subset \Rn$, there exists a cube $Q\in \mathcal{D}$ such that $K\subset Q$.
\end{enumerate}
\end{definition}
Note that, if $Q_0\in \mathcal{D}$, then every cube in $\mathcal{D}(Q_0)$ will also belong to $\mathcal D$.

The following Three Lattice Theorem, a fundamental concept in dyadic calculus (as outlined in \cite[Theorem 3.1]{hor+ler18}), palys a crucial role in elucidating the intricate structure of dyadic lattices.

\begin{lemma}[\cite{hor+ler18}]\label{lem2.1}
Given a dyadic lattice $\mathcal D$, there exist $3^n$ dyadic lattices $\{\mathcal{D}_j\}_{j=1}^n$
such that
$$\{3Q: Q\in \mathcal{D}\}=\bigcup_{j=1}^{3^n}\mathcal{D}_j$$
and for each cube $Q\in \mathcal{D}$, there 
exists a unique cube $R_Q$ in some $\mathcal{D}_j$ such that $Q\subset R_Q$ and $3l_Q=l_{R_{Q}}$.
\end{lemma}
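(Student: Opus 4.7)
The strategy is to reduce to the one-dimensional case and then construct three dyadic lattices on $\mathbb{R}$ whose union exhausts the family of tripled intervals $\{3I : I \in \mathcal{D}\}$. The $3^n$ lattices in $\mathbb{R}^n$ are then obtained as coordinate-wise products of these three one-dimensional lattices, and the covering identity $\bigcup_j \mathcal{D}_j = \{3Q : Q \in \mathcal{D}\}$ follows coordinate-wise.

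For the one-dimensional construction, I would begin with the observation that at each generation $k$ (cubes of length $2^k$), the tripled intervals $\{3I : I \in \mathcal{D}_k\}$ are intervals of length $3 \cdot 2^k$ whose left endpoints sweep out every integer multiple of $2^k$. Grouping them by the residue of the left endpoint modulo $3 \cdot 2^k$ yields three subfamilies $\mathcal{T}^{(\alpha)}_k$, $\alpha \in \{0,1,2\}$, each of which is a disjoint tiling of $\mathbb{R}$. The main combinatorial task is then to assemble these generation-wise tilings into three dyadic lattices that are coherently nested across $k$. A short modular-arithmetic computation shows that the naive residue-preserving scheme (keeping $\alpha$ constant across $k$) does produce a dyadic lattice in the case $\alpha = 0$, but for $\alpha \in \{1, 2\}$ the admissible parent at scale $2^{k+1}$ depends on the parity of the integer labelling the child. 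One must therefore allow $\alpha$ to vary with $k$ in a prescribed alternating pattern, and verify that three such coherent chains can be chosen so that their union covers every residue class at every generation. The three families $\mathcal{D}_j := \bigcup_k \mathcal{T}^{(\alpha_k^{(j)})}_k$ are then verified to satisfy the dyadic-lattice axioms: descendant-closure comes from the nesting, common ancestors from ascending the chain, and absorption of compact sets from the unboundedness of the generations.

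For the uniqueness clause, given $Q \in \mathcal{D}$, the cube $R_Q := 3Q$ is itself a tripled cube of side $3 l_Q$ containing $Q$, so it lies in some $\mathcal{D}_j$ by the covering property; uniqueness of the containing $\mathcal{D}_j$ is built into the construction, since each tripled cube is assigned to exactly one chain during the generation-wise partitioning. The principal obstacle in the whole argument is precisely the combinatorial setup described above --- identifying the correct alternation scheme for the two non-trivial chains, so that the three resulting families jointly exhaust all tripled cubes while each individually forms a dyadic lattice. Once this bookkeeping is carried out, the verification of the lattice axioms, the covering identity, and the uniqueness of $R_Q$ all reduce to routine modular-arithmetic checks.
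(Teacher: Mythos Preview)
The paper does not supply a proof of this lemma; it is quoted verbatim as Theorem~3.1 of Lerner--Nazarov \cite{hor+ler18}, so there is no in-paper argument to compare against.

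Your outline is close to the standard proof in that reference, but one point deserves care. You propose to reduce to the one-dimensional case and then form the $3^n$ lattices in $\mathbb{R}^n$ as coordinate-wise products of three one-dimensional lattices. This is fine for the \emph{standard} dyadic system (which factors as a product of one-dimensional dyadic systems), but the lemma is stated for an arbitrary dyadic lattice $\mathcal{D}$ in the sense of the paper's definition, and such a lattice need not have any global product structure --- the shifts at different scales may be incompatible across coordinates. The Lerner--Nazarov argument works directly in $\mathbb{R}^n$: at each generation one partitions $\{3Q : Q \in \mathcal{D}_k\}$ into $3^n$ tilings according to the position of $Q$ among the $3^n$ sub-cubes of $3Q$, and then assembles these tilings coherently across generations using the nesting structure of $\mathcal{D}$ itself rather than any one-dimensional factorisation. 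Your combinatorial observations about residues and alternation are the one-dimensional shadow of this, but to cover the general case you should either run the argument intrinsically in $\mathbb{R}^n$, or first argue that every dyadic lattice in $\mathbb{R}^n$ is, after a suitable identification, a product of one-dimensional dyadic lattices --- which is itself a nontrivial structural fact.
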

\begin{remark}\label{rem2.1}
Given a dyadic lattice $\mathcal D$. For an arbitrary cube $Q\subset \Rn$, there exists a cube $Q'\in \mathcal{D}$ such that $\frac{l_Q}{2}<l_{Q'}\leq l_Q$ and $Q\subset3Q'$. 
It follows from Lemma \ref{lem2.1} that $3Q'=P\in \mathcal {D}_j$ for some $j\in \{1,\ldots,m\}$. Therefore, 
for every cube $Q\subset \Rn$, we can find a $P\in \mathcal{D}_j$ for some $j$ such that $Q\subset P$ and $l_P \leq 3l_Q$.
\end{remark}
With the preceding definition and lemma in hand, we are ready to give the definition of sparse family.
\begin{definition}
Let $\mathcal D$ be a dyadic lattice. $\mathcal {S}\subset \mathcal{D}$ is called a $\eta$-sparse family with $0<\eta<1$ if for every cube $Q\in \mathcal{S}$, there exists a measurable subset $E_{Q}\subset Q$ such that $\eta|Q|\leq |E_{Q}|$, where the $\{E_{Q}\}$ are pairwise disjoint.
\end{definition}

\subsection{Sharp maximal functions}
For a locally integrable function $f$, The basic tools we will use are the standard Hardy-Littlewood maximal function and the sharp maximal function of Fefferman and Stein \cite{hor+fef} defined by
$$M(f)(x)=\sup_{Q\ni x}\frac{1}{|Q|}\int_Q|f(y)|dy,$$
and
$$
M^{\#}(f)(x)=\sup _{Q\ni x}\inf _{c \in \R} \frac{1}{|Q|}\int_Q|f(y)-c|dy\approx \sup _{Q\ni x}\frac{1}{|Q|}\int_Q|f(y)-\langle f\rangle_Q|dy.
$$
where the supremum is taken over all cubes $Q$ containing $x$.
For $0<\delta<\infty$, denote $M_{\delta}$ and $M^{\#}_{\delta}$ by
$$
M_{\delta}(f)(x)=(M_{\delta}(|f|^{\delta})(x))^{1/\delta}=\Big(\sup_{Q\ni x}\frac{1}{|Q|}\int_Q|f(y)|^{\delta}dy\Big)^{1/\delta},
$$
and
$$M^{\#}_{\delta}(f)(x)=(M^{\#}(|f|^{\delta})(x))^{1/\delta}.$$
If we restrict our considerations to dyadic cubes, we adopt the following notations accordingly $M^{\#, d}$, $M^{d}_{\delta}$ and $M^{\#, d}_{\delta}$.
The relationship between $M^{d}_{\delta}$ and $M^{\#, d}_{\delta}$ is given in \cite[p.10]{hor+ort1}.
\begin{lemma}[\cite{hor+ort1}]\label{lem2.4}
Let $0 < p <\infty, 0< \delta <1$, and $w\in A_{\infty}$. Then there exists a constant $C$ only depending on $n$ such that
\begin{equation*}
\|M^{d}_{\delta}(f)\|_{L^p(w)}\leq C\max\{1, p\}[w]_{A_{\infty}}\|M^{\#, d}_{\delta}(f)\|_{L^p(w)},
\end{equation*}
for any function $f$ such that the left-hand side is finite.
\end{lemma}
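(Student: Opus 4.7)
The plan is to combine a classical Fefferman--Stein good-$\lambda$ inequality on Lebesgue measure with the quantitative sharp reverse H\"older property of $A_\infty$ weights. Since $M^{d}_{\delta}$ and $M^{\#,d}_{\delta}$ are both dyadic, the entire argument can be carried out in a fixed dyadic lattice $\mathcal D$, so the resulting constant $C$ depends only on $n$.

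First I would pass to the weighted distribution function,
$$\|M^{d}_{\delta}(f)\|_{L^p(w)}^p=p\int_0^\infty\lambda^{p-1}w(\Omega_\lambda)\,d\lambda,\qquad \Omega_\lambda:=\{x:M^{d}_{\delta}(f)(x)>\lambda\},$$
and decompose each level set $\Omega_\lambda$ into its maximal dyadic cubes $\{Q_j^\lambda\}$. By the stopping rule $\langle|f|^{\delta}\rangle_{Q_j^\lambda}>\lambda^{\delta}$ while the dyadic parent fails this inequality, and since $0<\delta<1$, Kolmogorov's inequality for the dyadic maximal operator restricted to $Q_j^\lambda$ yields the classical Fefferman--Stein Lebesgue good-$\lambda$ estimate
$$\bigl|\{x\in Q_j^\lambda:M^{d}_{\delta}(f)(x)>2\lambda,\;M^{\#,d}_{\delta}(f)(x)\leq\gamma\lambda\}\bigr|\leq c_{n,\delta}\,\gamma^{\delta}|Q_j^\lambda|$$
for every $\gamma\in(0,1)$. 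To transfer this to the weight $w$, I would invoke the sharp reverse H\"older inequality of Hyt\"onen--P\'erez: for every dyadic cube $Q$ and every measurable $E\subset Q$,
$$\frac{w(E)}{w(Q)}\leq 2\Bigl(\frac{|E|}{|Q|}\Bigr)^{1/(\tau_n[w]_{A_\infty})},$$
and sum over $j$ to obtain
$$w\bigl(\{M^{d}_{\delta}(f)>2\lambda,\;M^{\#,d}_{\delta}(f)\leq\gamma\lambda\}\bigr)\leq 2\bigl(c_{n,\delta}\gamma^{\delta}\bigr)^{1/(\tau_n[w]_{A_\infty})}w(\Omega_\lambda).$$

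Finally I would feed this weighted good-$\lambda$ estimate into the layer-cake formula, multiply by $\lambda^{p-1}$, integrate in $\lambda$, and absorb the resulting multiple of $\|M^{d}_{\delta}(f)\|_{L^p(w)}^p$ on the left (legitimate thanks to the standing assumption that the left-hand side is finite), which produces a bound of the shape $\|M^{d}_{\delta}(f)\|_{L^p(w)}\leq C\gamma^{-1}\|M^{\#,d}_{\delta}(f)\|_{L^p(w)}$ once $\gamma$ is chosen to make the good-$\lambda$ prefactor smaller than $2^{-p-1}$. The main obstacle is to carry out this optimization so that the final constant is \emph{linear} in $[w]_{A_\infty}$ and of order $\max\{1,p\}$ in $p$: a naive single-scale minimization through the sharp reverse H\"older bound only yields exponential dependence of the form $2^{cp[w]_{A_\infty}/\delta}$. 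The remedy is to iterate the good-$\lambda$ step at the Fujii--Wilson scale $\lambda\sim 2^{1/[w]_{A_\infty}}$ using Carleson embedding against the sparse family of stopping cubes $\{Q_j^\lambda\}_{\lambda>0}$, equivalently to realize $M^{d}_{\delta}(f)$ as a sparse operator acting on $M^{\#,d}_{\delta}(f)$ and then apply Lerner's principal-cube decomposition; this geometric rather than arithmetic iteration is what upgrades the exponential constant to the linear $[w]_{A_\infty}$ and isolates the $\max\{1,p\}$ factor asserted in the lemma, and it is the technical heart of the proof.
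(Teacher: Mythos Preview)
The paper does not prove this lemma; it is quoted directly from \cite{hor+ort1}, so there is no in-paper argument to compare against.

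Your good-$\lambda$ setup and the passage to the weight via the sharp $A_\infty$ estimate (Lemma~\ref{lem2.3} in the paper) are correct, and you correctly diagnose that choosing $\gamma$ so that $2(c_{n,\delta}\gamma^\delta)^{1/(\tau_n[w]_{A_\infty})}\le 2^{-p-1}$ forces $\gamma^{-1}\sim 2^{c\,p\,[w]_{A_\infty}/\delta}$, which is exponentially worse than the asserted bound $\max\{1,p\}[w]_{A_\infty}$.

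The gap is that your final paragraph does not actually supply an argument that repairs this. Neither suggestion you offer is a usable mechanism as stated. The phrase ``iterate at the Fujii--Wilson scale $\lambda\sim 2^{1/[w]_{A_\infty}}$'' has no clear meaning, since the level $\lambda$ in a good-$\lambda$ inequality ranges freely over $(0,\infty)$ and carries no relation to the weight. And there is no pointwise sparse domination of $M^{d}_{\delta}(f)$ by a sparse operator applied to $M^{\#,d}_{\delta}(f)$; Lerner's local oscillation formula controls $|f-m_f(Q_0)|$, not $M^{d}_{\delta}(f)$, and the two are genuinely different objects (for constant $f$ the sharp maximal function vanishes while $M^{d}_{\delta}(f)$ does not). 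What the source \cite{hor+ort1} actually does is iterate the \emph{Lebesgue}-measure good-$\lambda$ inequality inside each stopping cube $Q_j^\lambda$ \emph{before} invoking the weight: because the hypothesis $M^{\#,d}_{\delta}(f)\le\gamma\lambda$ persists on every dyadic subcube, repeated stopping produces geometric decay in Lebesgue measure, and applying Lemma~\ref{lem2.3} only once to this already-small set yields a weighted decay whose integration against $\lambda^{p-1}$ gives the linear factor $\max\{1,p\}[w]_{A_\infty}$. Your proposal names adjacent ideas (Carleson embedding, principal cubes) but leaves precisely this step --- the one you yourself call the technical heart --- unproved.
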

\subsection{The classical weights}\label{section2.4}
This subsection primarily aims to introduce the pertinent definitions of classical weights and the weak $A_{\infty}$ class, along with their key properties. Let us start with the Muckenhoupt weights. 

We call that a nonnegative and locally integrable function $w$ on $\Rn$ belongs to the Muckenhoupt class $A_p$ with $1< p <\infty$, if
$$[w]_{A_p}:=\Big(\frac{1}{|Q|}\int_{Q}w(x)dx\Big)\Big(\frac{1}{|Q|}\int_{Q}w^{1-p'}(x)dx\Big)^{p-1}< \infty.$$ 
For $p=1$, a weight $w$ belongs to the class $A_1$ if there exsits a constant $C$ such that
$$\frac{1}{|Q|}\int_{Q}w(x)dx\leq C \mathop{\mathrm{ess\,inf}}\limits_{x\in Q} w(x),$$
and the infimum of these constants $C$ is called the $A_1$ constant of $w$, denoted by $[w]_{A_1}$. Since the $A_p$ classes are increasing with respect to $p$, the $A_{\infty}$ class of weights is defined in a natural way by $$A_{\infty}=\bigcup_{p>1}A_p.$$
It is not clear how to characterize the class $A_{\infty}$ in terms of a constant from the definition of $A_{\infty}$. However, it was proved by Fujji\cite{hor+fuj} and Wilson \cite{hor+wil}
that a weight $w\in A_{\infty}$ if and only if 
$$[w]_{A_{\infty}}:=\sup_{Q}\frac{1}{w(Q)}\int_{Q}M(w\chi_{Q})(x)dx<\infty,$$
which is known as the Fujii-Wilson $A_{\infty}$ constant. For $A_{\infty}$ weights, it turns out to the following lemma can be regarded as a characterization of its quantitative version. 
\begin{lemma}[\cite{iba}]\label{lem2.3}
There exists $c_n>0$ such that for every $w\in A_{\infty}$, every cube $Q$ and every measurable subset $E\subset Q$, it holds that
$$\frac{w(E)}{w(Q)}\leq 2\Big(\frac{|E|}{|Q|}\Big)^{\frac{1}{c_n[w]_{A_{\infty}}}}.$$
\end{lemma}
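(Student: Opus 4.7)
The plan is to deduce the inequality from the sharp reverse Hölder inequality for $A_\infty$ weights in its quantitative form, which is exactly what the constant $\tau_n$ from Lemma \ref{lem2.2} is designed to encode. Recall that the sharp reverse Hölder inequality asserts that for every $w \in A_\infty$, setting
\[
r_w = 1 + \frac{1}{\tau_n [w]_{A_\infty}},
\]
one has
\[
\langle w^{r_w}\rangle_Q^{1/r_w} \leq 2 \langle w\rangle_Q
\]
for every cube $Q$. This statement is the quantitative Fujii–Wilson version due to Hytönen–Pérez and is essentially equivalent to the lemma to be proved; I would invoke it directly from Lemma \ref{lem2.2} (whose conclusion is cited in the statement of Corollary \ref{cor5.2}).

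Next, I would apply Hölder's inequality with the conjugate pair $(r_w, r_w')$ on the measurable set $E\subset Q$:
\[
w(E) = \int_Q w(x)\chi_E(x)\, dx \leq \Bigl(\int_Q w(x)^{r_w}\, dx\Bigr)^{1/r_w} |E|^{1/r_w'}.
\]
Rewriting the first factor as $\langle w^{r_w}\rangle_Q^{1/r_w}|Q|^{1/r_w}$ and invoking the sharp reverse Hölder bound above, this is at most
\[
2 \langle w\rangle_Q |Q|^{1/r_w}|E|^{1/r_w'} = 2\, w(Q) \Bigl(\frac{|E|}{|Q|}\Bigr)^{1/r_w'}.
\]
Dividing through by $w(Q)$ yields $w(E)/w(Q)\leq 2(|E|/|Q|)^{(r_w-1)/r_w}$.

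The only remaining task is to relate the exponent $(r_w-1)/r_w$ to $1/(c_n [w]_{A_\infty})$ for a suitable dimensional constant $c_n$. A direct computation gives
\[
\frac{r_w-1}{r_w} = \frac{1}{\tau_n [w]_{A_\infty}+1}.
\]
Since $[w]_{A_\infty}\geq 1$ is a standard fact (a quick consequence of $Mw \geq w$ almost everywhere), enlarging $\tau_n$ if necessary so that $\tau_n\geq 1$, one has $\tau_n[w]_{A_\infty}+1 \leq 2\tau_n[w]_{A_\infty}$, so the exponent is bounded below by $1/(2\tau_n [w]_{A_\infty})$, and the conclusion follows with $c_n := 2\tau_n$.

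The only nontrivial ingredient is the sharp reverse Hölder inequality with the explicit exponent $1+1/(\tau_n[w]_{A_\infty})$; the author is taking it as already established (Lemma \ref{lem2.2}), so strictly speaking no difficulty remains beyond applying Hölder and tracking the arithmetic of the exponent. Were one to prove the reverse Hölder from scratch, the main obstacle would be that step: it requires a Calderón–Zygmund stopping-time argument on $w$ together with the Fujii–Wilson characterization $[w]_{A_\infty} = \sup_Q \frac{1}{w(Q)}\int_Q M(w\chi_Q)$, exploiting the self-improvement of the $A_\infty$ condition to upgrade integrability by an amount that depends quantitatively on $[w]_{A_\infty}$.
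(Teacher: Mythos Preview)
The paper does not give its own proof of this lemma; it is quoted from \cite{iba} without argument. Your proposal is the standard and correct derivation: sharp reverse H\"older for $A_\infty$ weights followed by H\"older's inequality on $E\subset Q$, together with the arithmetic $(r_w-1)/r_w=1/(\tau_n[w]_{A_\infty}+1)\geq 1/(2\tau_n[w]_{A_\infty})$ once $[w]_{A_\infty}\geq 1$ and $\tau_n\geq 1$.

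One small point of precision: Lemma~\ref{lem2.2} as stated in the paper is the \emph{weak} $A_\infty$ reverse H\"older, with $\frac{2}{|2Q|}\int_{2Q}w$ on the right-hand side rather than $2\langle w\rangle_Q$. Applying that version literally would produce $w(2Q)$ instead of $w(Q)$ in the final bound. What you actually need---and what you correctly wrote down---is the full $A_\infty$ version $\langle w^{r_w}\rangle_Q^{1/r_w}\leq 2\langle w\rangle_Q$ (Hyt\"onen--P\'erez), which holds with the same exponent $r_w=1+1/(\tau_n[w]_{A_\infty})$ but without enlarging the cube. This is standard and well known, so the argument goes through; just be aware that the citation of Lemma~\ref{lem2.2} is slightly off, since that lemma records the weak-$A_\infty$ formulation.
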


Now we introduce a class of weights which genuinely weakens the $A_{\infty}$ class. A weight $w$ is said to belong to weak $A_{\infty}$ class, written $w\in A^{weak}_{\infty}$, if there exist $0<c,\delta<\infty$ such that the estimate 
$$w(E)\leq c \Big(\frac{|E|}{|Q|}\Big)^{\delta}w(2Q)$$
holds for every cube and every measurable set $E\subset Q$. Similar to the Fujii-Wilson $A_{\infty}$ constant, $w\in A^{weak}_{\infty}$ if and only if
$$[w]^{weak}_{A_{\infty}}:=\sup_{Q}\frac{1}{w(2Q)}\int_{Q}M(w\chi_{Q})(x)dx<\infty.$$
It has been demonstrated in \cite{hor+and1} that the constant 2 in the average could be replaced by any parameter $\sigma >1$ and the following reverse H\"{o}lder's inequality is valid.
\begin{lemma}[\cite{hor+and1}]\label{lem2.2}
Let $w\in A^{weak}_{\infty}$, then for all cubes $Q$ in $\Rn$ with sides parallel to the axes,
$$\Big(\frac{1}{|Q|}\int_{Q}w^{r(w)}(x)dx\Big)^{\frac{1}{r(w)}}\leq \frac{2}{|2Q|}\int_{2Q}w(x)dx,$$
with 
$$r(w):=1+\frac{1}{\tau_n[w]^{weak}_{A_{\infty}}},$$
where $\tau_n$ is a dimensional constant that we may take to be $\tau_n\simeq 2^n$.
\end{lemma}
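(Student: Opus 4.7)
My strategy is to combine the Fujii--Wilson style characterization of $[w]^{\rm weak}_{A_\infty}$ stated immediately before the lemma with a Calder\'on--Zygmund stopping-time argument applied to $w$ inside $Q$, and then to choose the exponent $r(w)$ so that the geometric series produced by the stopping time converges with a prefactor collapsing exactly to the clean constant $2$.

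I would fix a cube $Q$ and work with the localised dyadic maximal function $M^d_Q$ built from the sublattice $\mathcal{D}(Q)$, noting that $w(x) \leq M^d_Q(w)(x)$ for a.e.\ $x \in Q$ by Lebesgue differentiation. Using the three lattice theorem (Lemma \ref{lem2.1}) to pass from general cubes to dyadic ones introduces a dimensional factor $\tau_n \simeq 2^n$ and converts the characterization of $[w]^{\rm weak}_{A_\infty}$ into the working inequality
\begin{equation*}
\int_Q M^d_Q(w)(x)\, dx \;\leq\; \tau_n\, [w]^{\rm weak}_{A_\infty}\, w(2Q).
\end{equation*}
A Calder\'on--Zygmund stopping time at the geometric heights $2^k\langle w\rangle_Q$ then produces nested level sets $\Omega_k := \{M^d_Q(w) > 2^k\langle w\rangle_Q\}$ with stopping cubes satisfying $\langle w\rangle_{Q^k_j} \leq 2^{n+k}\langle w\rangle_Q$, and hence the key distributional bound $w(\{M^d_Q w > \lambda\}) \leq 2^n \lambda\, |\{M^d_Q w > \lambda\}|$.

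With these ingredients in hand, the reverse H\"older inequality would follow by estimating
\begin{equation*}
\int_Q w^{r}\, dx \;\leq\; \int_Q (M^d_Q w)^{r-1}\, w\, dx \;=\; (r-1)\int_0^\infty \lambda^{r-2}\, w\bigl(\{M^d_Q w > \lambda\}\bigr)\, d\lambda,
\end{equation*}
splitting the $\lambda$-integral at $\lambda = \langle w\rangle_{2Q}$, absorbing the tail into a constant multiple of $\int_Q (M^d_Q w)^r\, dx$ via the stopping-time inequality above, and dominating the low-$\lambda$ remainder by the Fujii--Wilson bound. The choice $r-1 = (\tau_n[w]^{\rm weak}_{A_\infty})^{-1}$ is precisely what forces the absorption constant $\simeq 2^n(r-1)\tau_n [w]^{\rm weak}_{A_\infty}$ to be strictly less than $1$, so that after rearranging and taking the $r$-th root, the multiplicative constant in front of $\langle w\rangle_{2Q}$ collapses to $2$.

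The principal obstacle is quantitative rather than conceptual. Because weak $A_\infty$ gives no comparison between $w(Q)$ and $w(2Q)$, one cannot iterate within $Q$ in the style of the classical Muckenhoupt reverse H\"older proof; the entire argument must be arranged so that the Fujii--Wilson characterization is invoked \emph{only once}, paying the cost of passing from $Q$ to $2Q$ a single time at the very end. Matching both the sharp exponent $r(w)$ and the absolute constant $2$ simultaneously then amounts to a careful bookkeeping of the dimensional prefactors produced by the CZ stratification and by the absorption step.
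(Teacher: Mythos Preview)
The paper does not prove Lemma~\ref{lem2.2}; it is quoted from \cite{hor+and1} and used as a black box. Your outline is essentially the argument of that reference (the Hyt\"onen--P\'erez style proof adapted to the weak $A_\infty$ setting), so there is nothing in the paper itself to compare against, and your plan is sound.

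One minor bookkeeping point: the factor $\tau_n\simeq 2^n$ does not come from the Three Lattice Theorem. Since $M^d_Q(w)\le M(w\chi_Q)$ pointwise on $Q$, the bound $\int_Q M^d_Q(w)\,dx\le [w]^{\rm weak}_{A_\infty}\,w(2Q)$ follows directly from the definition of $[w]^{\rm weak}_{A_\infty}$ given just above the lemma, with no dimensional loss. The $2^n$ instead enters exactly where you put it in your step~3, namely the parent-cube estimate $\langle w\rangle_{Q^k_j}\le 2^n\cdot 2^k\langle w\rangle_Q$ in the Calder\'on--Zygmund stopping time; that is what propagates through the absorption step and fixes $r(w)=1+(\tau_n[w]^{\rm weak}_{A_\infty})^{-1}$. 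So you can simply drop the appeal to Lemma~\ref{lem2.1} and the rest of your argument goes through unchanged.
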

\subsection{Young functions and Orlicz maximal functions}\label{section2.5}
 In this subsection, we introduce some fundamental facts regarding Young functions and Orlicz spaces. We refer the readers to \cite{hor+rao} and \cite{hor+har} for more related information.

A function $A$ defined on $[0, \infty)$ is called a Young function, if $A$ is a non-negative, increasing and convex function with $A(0)=0$ and $A(t)\rightarrow \infty$ as $t\rightarrow \infty$. It is clear that if $A$ is a Young function, then $A(t)/t$ is not decreasing. Given a cube $Q$, the localized and normalized Luxemburg norm associated with the Orlicz space $L^A(\mu)$ is defined by
$$
\|f\|_{A(\mu),Q}=\inf\Big\{\lambda>0: \frac{1}{\mu(Q)}\int_QA\big(\frac{|f(x)|}{\lambda}\big)d\mu \leq 1\Big\}.$$
For convenience, if $\mu$ is the Lebesgue measure, we denote $\|f\|_{A(\mu),Q}=\|f\|_{A,Q}$
and if $\mu=wdx$ is an absolutely continuous measure with respect to the Lebesgue measure, we write $\|f\|_{A(\mu),Q}=\|f\|_{A(w),Q}$.

Now we review several useful conclusions about the Luxemburg norm. First of all, when $A(t)=t^r$ with $r\geq 1$, we have 
$\|f\|_{A,Q}=\|f\|_{L^r,Q}:=\big(\frac{1}{|Q|}\int_{Q}|f(x)|^rdx\big)^{1/r}$,
which goes back to the standard $L^{r}\big(Q,\frac{dx}{|Q|}\big)$ norm. In addition, it is easy to check that if $A$ and $B$ are Young functions with $A(t)\leq cB(t)$ for $t>t_0$, then $\|f\|_{A,Q}\leq c\|f\|_{B,Q}$, which can be seen as a generalized Jensen's inequality. Another noteworthy property of Orlicz space is the following generalized H\"{o}lder's inequality:
\begin{equation}\label{ie2.1}
\frac{1}{\mu(Q)}\int_{Q}|fg|d\mu\leq 2\|f\|_{A(\mu),Q}\|g\|_{\bar{A}(\mu),Q},
\end{equation}
where $\bar{A}(t)=\sup_{s>0}\{st-A(s)\}$ is the complementary function of Young function $A$. For instance, when $A(t)=t^p/p$ with $1<p<\infty$, then $\bar{A}(t)=t^{p'}/p'$ and if $A(t)=t$, a simple calculation yields that
\begin{equation*}
\bar{A}(t)=\sup_{s>0}\left\{s(t-1)\right\}=
    		\begin{cases}
    			0,  &t\leq 1,\\
    			\infty, &t>1.
    		\end{cases}
\end{equation*}
Now we introduce an additional generalization of \eqref{ie2.1} that is beneficial for our work. If $A,B$ and $C$ are Young functions satisfying $A^{-1}(t)B^{-1}(t)C^{-1}(t)\leq ct$ for $t\geq t_0$, then for cube $Q\subset \Rn$, it holds
$$\|fg\|_{\bar{C}(\mu),Q}\leq c\|f\|_{A(\mu),Q}\|g\|_{B(\mu),Q},$$
here $A^{-1}(x)=\inf\{y\in \R: A(y)>x\}$ with $0\leq x \leq \infty$, where $\inf \emptyset =\infty$.
In view of the average of the Luxemburg norm, it is natural to define a general maximal operator associated to the Young function $A$ as
$$M_{A}f(x)=\sup_{Q\ni x}\|f\|_{A,Q},$$
where the supremum is taken over all the cubes containing $x$. 

We now give some examples of maximal operators related to certain Young functions.
\begin{enumerate}
  \item If $A(t)=t^r$ with $1<r<\infty$, then $M_{A}=M_{r}$;
  \item If $A(t)=t\log^{\alpha}(e+t)$ with $\alpha>0$, then $\bar{A}(t)\simeq e^{t^{1/\alpha}}-1$ and we denote $M_{A}=M_{L (\log L)^{\alpha}}$. Notice that $M\leq M_A \lesssim M_r$ for all $1<r<\infty$. Moreover, $M_A\simeq M^{l+1}$, where $\alpha=l\in \mathbb{N}$ and $M^{l+1}$ is $M$ iterated $l+1$ times;
   \item If $A(t)=t\log^{\alpha}(e+t)\log^{\beta}(e+\log(e+t))$ with $\alpha,\beta>0$, then $M_A=M_{L(\log L)^\alpha(\log\log L)^{\beta}}$. In addition, it follows from a detailed calculations that for every $0<\varepsilon<1$,
       $$M_{L(\log L)^m(\log\log L)^{1+\varepsilon}}w(x)\leq C_{\varepsilon}M_{L(\log L)^{m+\varepsilon}}w(x),$$
       where $C_{\varepsilon}=e\log^m(2e)\log^{1+\varepsilon}(e+\log(2e))+
       2^{1+\varepsilon}(1+1/\varepsilon)^{1+\varepsilon}$.
\end{enumerate}

 We end this subsection by stating the Fefferman-Stein inequality for $M_A$.
\begin{lemma}[{\cite[Lemma 4.3]{iba}}]\label{lem5.4}
Let $A$ be a Young function. For any arbitrary weight $w$ and $\lambda>0$, we have
$$w\big(\{x\in\mathbb{R}^n:\ M_{A}f(x)>\lambda\}\big)\leq 3^n\int_{\mathbb{R}^n}A\Big(\frac{9^n|f(x)|}{\lambda}\Big)Mw(x)dx.$$
If additionally $A$ is submultiplicative, then
\begin{equation}\label{ie12}
w\big(\{x\in\mathbb{R}^n:\ M_{A}f(x)>\lambda\}\big)\leq C_{n,A}\int_{\mathbb{R}^n}A\Big(\frac{|f(x)|}{\lambda}\Big)Mw(x)dx.
\end{equation}
\end{lemma}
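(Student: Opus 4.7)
The plan is to prove this Fefferman--Stein-type inequality for the Orlicz maximal operator $M_A$ by a dyadic reduction via Lemma \ref{lem2.1}, followed by a Calder\'{o}n--Zygmund stopping-time argument in the spirit of the classical Fefferman--Stein proof, adapted to the Luxemburg norm.

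First, I would transfer $M_A$ to a dyadic setting. By Remark \ref{rem2.1}, every cube $Q\subset\mathbb{R}^n$ is contained in some $P\in\mathcal{D}_j$, for one of the $3^n$ lattices supplied by the three-lattice theorem, with $|P|\leq 3^n|Q|$. Combining this inclusion with the convexity bound $A(\alpha t)\leq\alpha A(t)$ for $\alpha\in[0,1]$ (which holds since $A$ is convex with $A(0)=0$) yields a pointwise domination of the form $\|f\|_{A,Q}\leq c_n\|f\chi_Q\|_{A,P}$, and therefore
\begin{equation*}
M_A f(x)\leq c_n\max_{1\leq j\leq 3^n}M_A^{\mathcal{D}_j}f(x),
\end{equation*}
where $M_A^{\mathcal{D}_j}$ denotes the dyadic Orlicz maximal function on $\mathcal{D}_j$. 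Thus it suffices to control each $w(\{M_A^{\mathcal{D}_j}f>\lambda/c_n\})$ separately.

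For each $j$, I would perform a dyadic Calder\'{o}n--Zygmund stopping time: extract the family $\{Q_k\}$ of maximal cubes in $\mathcal{D}_j$ on which $\|f\|_{A,Q_k}>\lambda/c_n$. Maximality gives pairwise disjointness, and $\bigcup_k Q_k=\{M_A^{\mathcal{D}_j}f>\lambda/c_n\}$. The key observation is that the strict inequality $\|f\|_{A,Q_k}>\lambda/c_n$ forces $\tfrac{1}{|Q_k|}\int_{Q_k}A(c_n|f|/\lambda)\,dx>1$, since the Luxemburg norm is defined as an infimum of admissible levels and this infimum is not attained strictly below. Consequently
\begin{equation*}
w(Q_k)\leq\frac{w(Q_k)}{|Q_k|}\int_{Q_k}A\!\left(\frac{c_n|f(x)|}{\lambda}\right)dx.
\end{equation*}
Because $Q_k$ is a cube and $y\in Q_k$ implies $Mw(y)\geq w(Q_k)/|Q_k|$, summing over the disjoint family gives the bound $\int_{\mathbb{R}^n}A(c_n|f|/\lambda)Mw\,dx$ for the $j$-th term; summing over the $3^n$ lattices then yields the first assertion. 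For the second assertion, submultiplicativity of $A$ gives $A(c_n t)\leq A(c_n)A(t)$, so the dimensional factor is absorbed into the constant $C_{n,A}$.

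The main subtlety lies in the bookkeeping of the first step: tracking how the enlargement factor from $Q$ to $P$ interacts with the convexity scaling $A(\alpha t)\leq\alpha A(t)$ so that the precise dimensional constants $3^n$ and $9^n$ of the statement emerge. This is a somewhat delicate but routine computation, resting on the observation that $\|f\|_{A,Q}>\mu$ implies $\tfrac{1}{|Q|}\int_Q A(|f|/\mu)\,dx>1$. Once this is in place the rest of the argument proceeds without difficulty, and the submultiplicative improvement is essentially cosmetic.
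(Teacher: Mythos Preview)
Your approach is correct and is the standard route to this Fefferman--Stein-type inequality for Orlicz maximal operators. Note, however, that the paper does not supply its own proof of this lemma: it is simply quoted from \cite[Lemma 4.3]{iba}, so there is no in-paper argument to compare against.

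A few remarks on the details. Your three-lattice reduction followed by a dyadic stopping-time is one of two natural routes; the other is a direct Vitali-type covering of the level set $\{M_Af>\lambda\}$ by cubes $Q_x$ with $\|f\|_{A,Q_x}>\lambda$, extraction of a disjoint subfamily $\{Q_j\}$ whose triples cover, and then the same Luxemburg-norm observation $\|f\|_{A,Q_j}>\lambda\Rightarrow |Q_j|<\int_{Q_j}A(|f|/\lambda)\,dx$ combined with $\tfrac{w(3Q_j)}{|3Q_j|}\leq \inf_{Q_j}Mw$. Both yield the same structure; the specific constants $3^n$ outside and $9^n$ inside depend on exactly which covering one uses (centered versus uncentered cubes, Vitali versus three-lattice), and your remark that the bookkeeping is ``delicate but routine'' is accurate. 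With the three-lattice argument as you sketch it and Remark \ref{rem2.1}, one actually gets $\|f\|_{A,Q}\leq 3^n\|f\|_{A,P}$ via $A(t/3^n)\leq 3^{-n}A(t)$, which would produce $3^n$ inside rather than $9^n$; the discrepancy with the stated constants is harmless and likely reflects a slightly different covering in the original source. The core logical steps---the Luxemburg-norm implication, the pointwise bound $w(Q_k)/|Q_k|\leq \inf_{Q_k}Mw$, disjointness of the stopping cubes, and the submultiplicativity step---are all handled correctly.
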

\vspace{0.2cm}

\section{ Proofs of Theorem \ref{thm5.1}}\label{sec3}
In this section, we aim to establish the sparse domination for $T_{\vec b}$. We begin by giving some notations and lemmas. 
Given a sublinear operator $T$, define the grand maximal truncated operator $\mathcal{M}_{T}$ by
$$\mathcal{M}_{T}f(x):=\sup_{Q\ni x} \mathop{\mathrm{ess\,sup}}\limits_{\xi \in Q}|T(f\chi_{\mathbb{R}^n\backslash 3Q})(\xi)|,$$
where the supremum is taken over all cubes $Q\subset \Rn$ containing $x$. Given a cube $Q_0$, for $x\in Q_0$, a local version of $\mathcal{M}_{T}$ is defined as follows
$$\mathcal{M}_{T,Q_0}f(x):=\sup_{Q\ni x,Q\subseteq Q_0} \mathop{\mathrm{ess\,sup}}\limits_{\xi \in Q}|T(f\chi_{3Q_0\backslash 3Q})(\xi)|.$$

The following results concerning the grand maximal truncated operator $\mathcal{M}_{T}$ and  A-H\"{o}rmander operator $T$ play a crucial role in our analysis, see \cite[Lemma 4.1]{iba} and \cite[Lemma 5.1]{iba}.

\begin{lemma}[\cite{iba}]\label{lem5.2}
Let $A$ be a Young function. If $T$ is an $\bar{A}$-H\"{o}rmander operator, then
$$\|T\|_{L^1\rightarrow L^{1,\infty}}\leq c_n\big(\|T\|_{L^2\rightarrow L^2}+H_{\bar A}\big).$$
Furthermore, as a consequence of Marcinkiewicz interpolation theorem and the fact that $T$ is almost self-dual, we have
$$\|T\|_{L^p\rightarrow L^p}\leq c_n\big(\|T\|_{L^2\rightarrow L^2}+H_{\bar A}\big),\,\,1<p<\infty.$$
\end{lemma}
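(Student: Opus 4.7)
The plan is to prove the weak-type $(1,1)$ bound by a Calder\'{o}n--Zygmund decomposition tailored to the Orlicz H\"{o}rmander condition, and then to obtain the $L^p$ bounds by Marcinkiewicz interpolation together with a duality argument that exploits the symmetry between $H_{K,\bar A,1}$ and $H_{K,\bar A,2}$ in the definition of $H_{\bar A}$.

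\textbf{Step 1 (CZ decomposition).} Fix $f\in L^1(\mathbb{R}^n)$ and $\lambda>0$. Apply the Calder\'{o}n--Zygmund decomposition to $|f|$ at height $\lambda$, producing a pairwise disjoint family of cubes $\{Q_j\}_j$ with $\lambda<\frac{1}{|Q_j|}\int_{Q_j}|f|\leq 2^n\lambda$ and $|f|\leq\lambda$ a.e.\ outside $\bigcup_j Q_j$. Write $f=g+b$ where $b=\sum_j b_j$, $b_j=(f-\langle f\rangle_{Q_j})\chi_{Q_j}$, and $g=f-b$. Standard properties give $\|g\|_\infty\lesssim\lambda$, $\|g\|_1\leq\|f\|_1$, $\|b_j\|_1\lesssim\lambda|Q_j|$, $\int b_j=0$, and $\sum_j|Q_j|\leq\lambda^{-1}\|f\|_1$.

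\textbf{Step 2 (Good part).} Chebyshev's inequality together with the $L^2$ hypothesis yields $|\{|Tg|>\lambda/2\}|\lesssim \lambda^{-2}\|T\|_{L^2\to L^2}^2\|g\|_2^2\lesssim \lambda^{-1}\|T\|_{L^2\to L^2}^2\|f\|_1$, using $\|g\|_2^2\lesssim\lambda\|g\|_1$.

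\textbf{Step 3 (Bad part).} Set $\Omega=\bigcup_j 3Q_j$, so $|\Omega|\leq 3^n\lambda^{-1}\|f\|_1$. Using $\int b_j=0$, for $x\notin 3Q_j$ rewrite $Tb_j(x)=\int_{Q_j}[K(x,y)-K(x,c_j)]b_j(y)\,dy$, where $c_j$ is the center of $Q_j$. Partition $(3Q_j)^c$ into annuli $A_k=2^{k+1}Q_j\setminus 2^kQ_j$ for $k\geq 1$, apply Fubini, and then apply the generalized H\"{o}lder inequality in the Orlicz pair $L^{\bar A}$--$L^A$ to obtain $\int_{A_k}|K(x,y)-K(x,c_j)|\,dx\lesssim |2^{k+1}Q_j|\,\|(K(\cdot,y)-K(\cdot,c_j))\chi_{A_k}\|_{\bar A,2^{k+1}Q_j}$. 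Summing over $k\geq 1$ and invoking $H_{K,\bar A,2}\leq H_{\bar A}$ produces $\int_{\mathbb{R}^n\setminus\Omega}|Tb|\lesssim H_{\bar A}\sum_j\|b_j\|_1\lesssim H_{\bar A}\|f\|_1$, and Chebyshev delivers the bound $\lambda^{-1}H_{\bar A}\|f\|_1$ for this contribution. Combining with Step 2 and the trivial $|\Omega|$ estimate gives $\|T\|_{L^1\to L^{1,\infty}}\leq c_n(\|T\|_{L^2\to L^2}+H_{\bar A})$, where any quadratic dependence on $\|T\|_{L^2\to L^2}$ produced by the good-part estimate is absorbed after rescaling since the $L^2$ norm enters only through the square of the final constant before taking $\sup$.

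\textbf{Step 4 ($L^p$ bounds).} The Marcinkiewicz interpolation theorem applied to the weak $(1,1)$ bound of Step 3 and the $L^2$ hypothesis yields $\|T\|_{L^p\to L^p}\lesssim\|T\|_{L^2\to L^2}+H_{\bar A}$ for $1<p\leq 2$. For $2<p<\infty$, observe that the formal adjoint $T^{*}$ has kernel $K^{*}(x,y)=K(y,x)$, so the symmetric form $H_{\bar A}=\max\{H_{K,\bar A,1},H_{K,\bar A,2}\}$ gives $H_{K^{*},\bar A}=H_{K,\bar A}$ and $\|T^{*}\|_{L^2\to L^2}=\|T\|_{L^2\to L^2}$; applying Steps 1--3 to $T^{*}$ and dualizing yields the claim for the remaining range.

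\textbf{Main obstacle.} The delicate step is the Orlicz H\"{o}lder bound in Step 3: the kernel is not pointwise smooth and only the integrated $L^{\bar A}$-H\"{o}rmander condition is available, so one has to slice into dyadic annuli $A_k$ and transfer the Hörmander estimate through the $L^{\bar A}$--$L^A$ duality, taking care that the Luxemburg norm over $2^{k+1}Q_j$ in H\"{o}lder's inequality is comparable, up to a dimensional constant, to the norm over $2^kQ$ appearing in the definition of $H_{K,\bar A,2}$.
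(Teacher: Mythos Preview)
Your argument is the standard Calder\'on--Zygmund proof and is precisely the approach of the cited reference; the present paper does not reprove the lemma but simply quotes it. Two small clarifications: the passage from the quadratic $\|T\|_{L^2\to L^2}^2$ in the good-part estimate to the stated linear bound is cleanest by first normalizing so that $\|T\|_{L^2\to L^2}+H_{\bar A}=1$ and then running Steps~1--3; and your ``main obstacle'' dissolves once you choose $Q=2Q_j$ in Definition~\ref{def5.2}, since then $y,c_j\in\frac12Q=Q_j$, the annulus $2^kQ\setminus 2^{k-1}Q$ is exactly your $A_k=2^{k+1}Q_j\setminus 2^kQ_j$, and the Luxemburg norm in the H\"ormander condition is already taken over $2^kQ=2^{k+1}Q_j$, so no comparison of norms on different cubes is required.
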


\begin{lemma}[\cite{iba}]\label{lem5.1}
Let $A\in \mathcal{Y}(p_0,p_1)$ be a Young function and $T$ be an $\bar{A}$-H\"{o}rmander singular internal operator. Then the following estimates hold
\begin{enumerate}
\item For a.e. $x\in Q_0$,
    $$|T(f\chi_{3Q_0})(x)|\leq c_n\|T\|_{L^1\rightarrow L^{1,\infty}}f(x)+\mathcal{M}_{T,Q_0}f(x).$$
\item For all $x\in \mathbb{R}^n$ and $\delta\in(0,1)$,
    $$\mathcal{M}_Tf(x)\leq c_{n,\delta}\Big(H_{\bar{A}}M_{A}f(x)+M_{\delta}(Tf)(x)+\|T\|_{L^1\rightarrow L^{1,\infty}}Mf(x)\Big).$$
    Furthermore, for any $\lambda>0$,
    \begin{align*}
    \big|&\left\{x\in\mathbb{R}^n:\mathcal{M}_Tf(x)>\lambda\right\}\big|\\
    &\leq\int_{\mathbb{R}^n}A\bigg(c_{n,p_0,p_1}\max\{c_{A,p_0},c_{A,p_1}\}\Big(H_{\bar{A}}+\|T\|_{L^2\rightarrow L^2}\Big)\frac{|f(x)|}{\lambda}\bigg)dx.
\end{align*}
\end{enumerate}
\end{lemma}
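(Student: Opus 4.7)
The plan is to establish the three claims in order, relying on Calder\'{o}n-Zygmund-type selection arguments combined with the weak-$(1,1)$ bound of $T$ from Lemma \ref{lem5.2} for the pointwise inequalities, and then to deduce the distributional bound from the pointwise inequality proved in~(2). For part~(1), fix $Q_0$ and pick $x\in Q_0$ to be both a Lebesgue point of $|f|$ and an approximate continuity point of $T(f\chi_{3Q_0})$ (discarding a null set). For every cube $Q\ni x$ with $Q\subseteq Q_0$ write $T(f\chi_{3Q_0})(\xi)=T(f\chi_{3Q})(\xi)+T(f\chi_{3Q_0\setminus 3Q})(\xi)$; the second summand is at most $\mathcal{M}_{T,Q_0}f(x)$ for a.e.\ $\xi\in Q$ by the definition of $\mathcal{M}_{T,Q_0}$, while Chebyshev combined with the weak-$(1,1)$ bound of $T$ yields
$$\bigl|\{\xi\in Q:|T(f\chi_{3Q})(\xi)|>2\cdot 3^n\|T\|_{L^1\to L^{1,\infty}}\langle|f|\rangle_{3Q}\}\bigr|\le |Q|/2.$$
If $|T(f\chi_{3Q_0})(x)|$ exceeded $\mathcal{M}_{T,Q_0}f(x)+c_n\|T\|_{L^1\to L^{1,\infty}}|f(x)|$ for a sufficiently large $c_n$, then approximate continuity at $x$ would force the level set above to occupy more than $|Q|/2$ of $Q$ once $Q$ is small, contradicting the measure bound since Lebesgue differentiation makes $\langle|f|\rangle_{3Q}\to|f(x)|$.

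For the first inequality in~(2), fix $Q\ni x$, $\xi\in Q$, and introduce an auxiliary point $\xi'\in Q$ to be chosen. Write
$$|T(f\chi_{\mathbb{R}^n\setminus 3Q})(\xi)|\le \int_{\mathbb{R}^n\setminus 3Q}|K(\xi,y)-K(\xi',y)||f(y)|\,dy+|Tf(\xi')|+|T(f\chi_{3Q})(\xi')|.$$
The kernel integral is handled via the annular decomposition $\mathbb{R}^n\setminus 3Q\subseteq\bigcup_{k\ge 1}\bigl(2^k(2Q)\setminus 2^{k-1}(2Q)\bigr)$: on each annulus the generalized H\"{o}lder inequality for the complementary Young pair $(\bar A,A)$ produces a product of $\|(K(\xi,\cdot)-K(\xi',\cdot))\chi_{2^k(2Q)\setminus 2^{k-1}(2Q)}\|_{\bar A,2^k(2Q)}$ with $\|f\|_{A,2^k(2Q)}\le M_Af(x)$, and summing via the $\bar A$-H\"{o}rmander condition applied to the cube $2Q$ (so that $\xi,\xi'\in\tfrac12(2Q)$) yields $2H_{\bar A}M_Af(x)$. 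For the two remaining terms, pick $\xi'$ outside the sets $\{|Tf|>C_1 M_\delta(Tf)(x)\}\cap Q$ and $\{|T(f\chi_{3Q})|>C_2\|T\|_{L^1\to L^{1,\infty}}Mf(x)\}\cap Q$, each of measure at most $|Q|/4$ by Chebyshev with exponent $\delta$ and by the weak-$(1,1)$ bound respectively; such $\xi'$ exists. Taking the essential supremum in $\xi$ over $Q$ and then the supremum over $Q\ni x$ yields the pointwise bound.

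For the distributional bound, apply the pointwise estimate from~(2) and handle the three terms separately. The $H_{\bar A}M_Af$ contribution is immediately controlled by Lemma \ref{lem5.4} with $w\equiv 1$, giving the Orlicz modular bound with a constant proportional to $H_{\bar A}$. For the $\|T\|_{L^1\to L^{1,\infty}}Mf$ contribution, the weak-$(1,1)$ of $M$ together with $A(t)\ge t/c_{A,p_0}$ valid for $t\ge\max(1,t_A)$ (from $A\in\mathcal{Y}(p_0,p_1)$ with $p_0\ge 1$) absorbs $|f|/\lambda$ into $A(c|f|/\lambda)$ on $\{|f|/\lambda>\max(1,t_A)\}$, while a constant rescaling of $\lambda$ eliminates the complementary contribution. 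The main obstacle is the $M_\delta(Tf)$ term, since $M_\delta$ is not of weak type $(1,1)$: to handle it, the plan is to invoke the $L^p$-boundedness of $T$ from Lemma \ref{lem5.2} (with norm $\lesssim H_{\bar A}+\|T\|_{L^2\to L^2}$) at a value of $p$ slightly above $p_0$, combine it with $L^p$-boundedness of $M_\delta$ for $p>\delta$, and convert the resulting $L^p$-weak estimate into the Orlicz modular bound via the two-sided control $A(t)\gtrsim t^{p_0}$ for $t\ge t_A$ and $A(t)\gtrsim t^{p_1}$ for $t\le t_A$. This interpolation is precisely what produces the displayed constant $c_{n,p_0,p_1}\max\{c_{A,p_0},c_{A,p_1}\}(H_{\bar A}+\|T\|_{L^2\to L^2})$, and collecting the three contributions completes the proof.
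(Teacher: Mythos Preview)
The paper does not supply its own proof of this lemma; it is quoted verbatim from \cite{iba}. So there is no in-paper argument to compare against, and the assessment below concerns the internal soundness of your sketch.

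Parts~(1) and the pointwise inequality in~(2) follow the standard Lerner-type scheme and are correctly outlined: the Lebesgue-point/approximate-continuity selection in~(1), and in~(2) the splitting
\[
|T(f\chi_{\mathbb{R}^n\setminus 3Q})(\xi)|\le \int_{\mathbb{R}^n\setminus 3Q}|K(\xi,y)-K(\xi',y)||f(y)|\,dy+|Tf(\xi')|+|T(f\chi_{3Q})(\xi')|,
\]
followed by the annular H\"older argument for the kernel term and the measure-selection of $\xi'$ for the other two, are exactly what \cite{iba} does.

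The gap is in the distributional bound, specifically the $M_\delta(Tf)$ term. Your proposal to ``invoke the $L^p$-boundedness of $T$ at a value of $p$ slightly above $p_0$'' cannot be converted back into the Orlicz modular: on the region $\{C|f|/\lambda>t_A\}$ the hypothesis $A\in\mathcal{Y}(p_0,p_1)$ only gives $A(t)\gtrsim t^{p_0}$, and since $t>t_A\ge 1$ one has $t^{p}>t^{p_0}$ for $p>p_0$, so an $L^p$-estimate with $p>p_0$ is \emph{not} dominated by $\int A(C|f|/\lambda)$. The correct route is to split $f=f\chi_{\{C|f|/\lambda>t_A\}}+f\chi_{\{C|f|/\lambda\le t_A\}}$ and use the $L^{p_1}$-bound of $M_\delta\circ T$ on the small part (this yields the factor $c_{A,p_1}$) and the $L^{p_0}$-bound on the large part (yielding $c_{A,p_0}$). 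When $p_0=1$ the large part requires knowing that $M_\delta\circ T$ is of weak type $(1,1)$; this does hold, but it is a separate lemma: writing $g=Tf$, set $\Omega=\{|g|>\lambda\}$, observe $M_\delta(g\chi_{\Omega^c})\le\lambda$, and estimate $\int_\Omega|g|^\delta$ via the layer-cake formula and $\|g\|_{L^{1,\infty}}$ to get $\int_\Omega|g|^\delta\le \tfrac{1}{1-\delta}\|g\|_{L^{1,\infty}}\lambda^{\delta-1}$, whence weak-$(1,1)$ of $M$ finishes. The same splitting (not ``a constant rescaling of $\lambda$'') is also what is needed for the $Mf$ term on the region $\{C|f|/\lambda\le t_A\}$. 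Once this is fixed, the constant $c_{n,p_0,p_1}\max\{c_{A,p_0},c_{A,p_1}\}(H_{\bar A}+\|T\|_{L^2\to L^2})$ emerges exactly as stated.
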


\begin{proof}[Proof of Theorem $\ref{thm5.1}$]
For simplicity, we only consider the case $m=2$. Our method still holds for all $m\in \N$ with little modifications. 

Fix a cube $Q_0 \subset \Rn$. We claim that there exists a $\frac{1}{2}$-sparse family $\mathcal{F}\subseteq\mathcal{D}(Q_0)$ such that for a.e. $x\in Q_0$,
\begin{equation}\label{ie5.2}
|T_{\vec b}(f\chi_{3Q_0})(x)|\leq C_{n,m}C_{T}\sum_{\vec{\gamma}\in{\{0,1\}^2}}\mathcal{B}^{\vec \gamma}_{A,\mathcal{F}}(\vec{b},f)(x),
\end{equation}
where
$$\mathcal{B}^{\vec \gamma}_{A,\mathcal{F}}(\vec{b},f)(x):=\sum_{Q\in \mathcal{F}}\prod_{s:\gamma_s=1}\big|b_{s}(x)-\langle b_{s}\rangle_{R_Q}\big|\Big\|\prod_{s:\gamma_s=0}(b_{s}-\langle b_{s}\rangle_{R_Q})f\Big\|_{A,3Q}\chi_{Q}(x).$$
We postpone the proof of this claim to the end of this section. Assuming the above claim is true, now take a partition of $\Rn$ by cubes $Q_j$ such that ${\rm{supp}}\ f\subset 3Q_0$. In fact, we can take a cube $Q_0$ such that ${\rm{supp}}\ f\subset 3Q_0$ and cover $3Q_0\backslash Q_0$ by $3^n-1$ congruent cubes $Q_j$. Each of them satisfies $Q_0\subset 3Q_j$. Continue to decompose $9Q_0\backslash 3Q_0$ in the same way and repeat the above process. Then the union of the resulting cubes $Q_j$, including $Q_0$, will satisfy the desired properties.

With the preceding partition at our disposal, applying \eqref{ie5.2} to each $Q_j$, we obtain that for a.e. $x \in Q_j$,
$$|T_{\vec b}(f)(x)|\leq C_{n,m}C_{T}\sum_{\vec{\gamma}\in{\{0,1\}^2}}\mathcal{B}^{\vec \gamma}_{A,\mathcal{F}_j}(\vec{b},f)(x),$$
where each $\mathcal{F}_j$ is a $\frac{1}{2}$-sparse family. Let $\mathcal{F}=\cup_j\mathcal{F}_j$, then it is easy to check that $\mathcal{F}$ is also a $\frac{1}{2}$-sparse family and
$$|T_{\vec b}(f)(x)|\leq C_{n,m}C_{T}\sum_{\vec{\gamma}\in{\{0,1\}^2}}\mathcal{B}^{\vec \gamma}_{A,\mathcal{F}}(\vec{b},f)(x),\quad \text{a.e.} \ x \in \mathbb{R}^n.$$
By Remark \ref{rem2.1}, we can deduce that for every $Q \subset \Rn$, there exists a cube $R_Q\in \mathcal{D}_j$ for some $j$, for which $3Q \subset R_Q$ and $|R_Q|\leq 9^n|Q|$. Therefore, $\|f\|_{A,3Q}\leq c_n\|f\|_{A,R_Q}$. Let
$$\mathcal{S}_j=\{R_Q\in \mathcal{D}_j:~Q\in \mathcal{F}\}.$$
Then the $\frac{1}{2}$-sparseness of $\mathcal{F}$ implies that $\mathcal{S}_j$ is a $\frac{1}{2\cdot9^n}$-sparse family and
$$|T_{\vec b}f(x)|\leq
C_{n,m}C_{T}\sum_{j=1}^{3^n}\sum_{\vec{\gamma}\in\{0,1\}^2}\mathcal{A}^{\vec\gamma}_{A,\mathcal{S}_j}(\vec{b},f)(x), \ \text{a.e.}\ x \in \mathbb{R}^n.$$
The proof of Theorem \ref{thm5.1} is finished.

Now, we turn our attention to the proof of \eqref{ie5.2}. It suffices to show that for a.e. $x\in Q_0$, the following recursive inequality holds
\begin{align*}
|T_{\vec b}(f\chi_{3Q_0})(x)|\chi_{Q_0}(x)
&\lesssim \Big(\Big\|\prod_{i=1}^2(b_i-\langle b_i \rangle_{R_{Q_0}})f\Big\|_{A,3Q_0}+\prod_{i=1}^2|b_i(x)-\langle b_i \rangle_{R_{Q_0}}|\|f\|_{A,3Q_0}\\
&\quad+\big|b_1(x)-\langle b_1 \rangle_{R_{Q_0}}\big|\big\|(b_2-\langle b_2 \rangle_{R_{Q_0}})f\big\|_{A,3Q_0}\\
&\quad+\big|b_2(x)-\langle b_2 \rangle_{R_{Q_0}}\big|\big\|(b_1-\langle b_1 \rangle_{R_{Q_0}})f\big\|_{A,3Q_0}\Big)\chi_{Q_0}(x)\\
&\quad+\sum_j|T_{\vec{b}}(f\chi_{3P_j})(x)|\chi_{P_j}(x),
\end{align*}
where $\{P_j\}\subset \mathcal{D}({Q_0})$ are pairwise disjoint cubes such that $\sum_{j}|P_j|\leq\frac{1}{2}|Q_0|$.

Notice that the last term of the above inequality has the same form as the left side, therefore we can replace $|T_{\vec b}(f\chi_{3Q_0})(x)|\chi_{Q_0}(x)$ with $|T_{\vec{b}}(f\chi_{3P_j})(x)|\chi_{P_j}(x)$
and repeat the iterative process.

Let $\mathcal{F}=\{P^k_{j}\}$, $k\in \mathbb N$, where $\{P^0_j\}=\{Q_0\}$, $\{P^1_j\}=\{P_j\}$, $\{P^{k}_j\}$ are the cubes obtained at the $k$-th stage of the iterative process.
Then $\mathcal{F}$ is a $\frac{1}{2}$-sparse family, indeed, for each $P^{k}_j$, we can take $E_{P^{k}_j}=P^{k}_j\backslash P^{k}_{j+1}$.

Observe that for any pairwise disjoint cubes $\{P_j\} \subset \mathcal{D}(Q_0)$, the linear property of $T_{\vec{b}}$ implies that 
\begin{align*}
  |T_{\vec{b}}(f\chi_{3Q_0})(x)|\chi_{Q_0}(x)&\leq |T_{\vec{b}}(f\chi_{3Q_0})(x)|\chi_{{Q_0}\backslash \cup_jP_j}(x)+\sum_{j}|T_{\vec{b}}(f\chi_{3Q_0})(x)|\chi_{P_j}(x) \\
  &\leq|T_{\vec{b}}(f\chi_{3Q_0})(x)|\chi_{{Q_0}\backslash \cup_jP_j}(x)+\sum_{j}|T_{\vec{b}}(f\chi_{3Q_0 \backslash 3P_j})(x)|\chi_{P_j}(x)\\
  &\quad+\sum_{j}|T_{\vec{b}}(f\chi_{3P_j})(x)|\chi_{P_j}(x).
\end{align*}
In order to prove the recursive inequality, it is sufficient to show that there exist
pairwise disjoint cubes $\{P_j\} \subset \mathcal{D}(Q_0)$ such that $\sum_{j}|P_j|\leq\frac{1}{2}|Q_0|$ and for a.e. $x\in Q_0$,
\begin{align*}
|T&_{\vec{b}}(f\chi_{3Q_0})(x)|\chi_{{Q_0}\backslash \cup_jP_j}(x)+\sum_{j}|T_{\vec{b}}(f\chi_{3Q_0 \backslash 3P_j})(x)|\chi_{P_j}(x)\\
&\lesssim\Big(\Big\|\prod_{i=1}^2(b_i-\langle b_i \rangle_{R_{Q_0}})f\Big\|_{A,3Q_0}+\big|b_1(x)-\langle b_1 \rangle_{R_{Q_0}}\big|\big\|(b_2-\langle b_2 \rangle_{R_{Q_0}})f\big\|_{A,3Q_0}\\
&\quad+\big|b_2(x)-\langle b_2 \rangle_{R_{Q_0}}\big|\big\|(b_1-\langle b_1 \rangle_{R_{Q_0}})f\big\|_{A,3Q_0}
+\prod_{i=1}^2\big|b_i(x)-\langle b_i \rangle_{R_{Q_0}}\big|\|f\|_{A,3Q_0}\Big)\chi_{Q_0}(x).
\end{align*}
To this aim, using the translational invariance of $T_{\vec{b}}$, i.e., $T_{\vec{b}-\vec{c}}=T_{\vec{b}}$, where $\vec{c}=(c_1,\ldots,c_m)$, $c_i\in \mathbb{R}$, $1\leq i\leq m$, it follows that
\begin{align*}
|T_{\vec{b}}(f\chi_{3Q_0})(x)|\chi_{{Q_0}\backslash \cup_jP_j}(x)&\leq\bigg[\prod_{i=1}^2\big|b_i(x)-\langle b_i \rangle_{R_{Q_0}}\big|\big|T(f\chi_{3Q_0})(x)\big|\\
&\quad+\big|b_1(x)-\langle b_1 \rangle_{R_{Q_0}}\big|\big|T\big((b_2-\langle b_2 \rangle_{R_{Q_0}})f\chi_{3Q_0}\big)(x)\big|\\
&\quad+\big|b_2(x)-\langle b_2 \rangle_{R_{Q_0}}\big|\big|T\big((b_1-\langle b_1 \rangle_{R_{Q_0}})f\chi_{3Q_0}\big)(x)\big|\\
&\quad+\Big|T\Big(\prod_{i=1}^2(b_i-\langle b_i \rangle_{R_{Q_0}})f\chi_{3Q_0}\Big)(x)\Big|\bigg]\chi_{{Q_0}\backslash \cup_jP_j}(x),
\end{align*}
and
\begin{align*}
\sum_{j}|T_{\vec{b}}(f\chi_{3Q_0 \backslash 3P_j})(x)|\chi_{P_j}(x)&\leq\sum_{j}\bigg[\prod_{i=1}^2\big|b_i(x)-\langle b_i \rangle_{R_{Q_0}}\big|\big|T(f\chi_{3Q_0\backslash3P_{j}})(x)\big|\\
&\quad+\big|b_1(x)-\langle b_1 \rangle_{R_{Q_0}}\big|\big|T\big((b_2-\langle b_2 \rangle_{R_{Q_0}})f\chi_{3Q_0\backslash3P_{j}}\big)(x)\big|\\
&\quad+\big|b_2(x)-\langle b_2 \rangle_{R_{Q_0}}\big|\big|T\big((b_1-\langle b_1 \rangle_{R_{Q_0}})f\chi_{3Q_0\backslash3P_{j}}\big)(x)\big|\\
&\quad+\Big|T\Big(\prod_{i=1}^2(b_i-\langle b_i \rangle_{R_{Q_0}})f\chi_{3Q_0\backslash3P_{j}}\Big)(x)\Big|\bigg]\chi_{P_j}(x).
\end{align*}
For $i=1,2$, we define the sets $E_i$ as follows:
\begin{align*}
& E_i:=\Big\{x\in Q_0:|b_i(x)-\langle b_i \rangle_{R_{Q_0}}||f(x)|>\alpha_n\|(b_i-\langle b_i \rangle_{R_{Q_0}})f\|_{A,3Q_0}\Big\} \\
&\quad \ \ \bigcup \Big\{x\in Q_0:\ \mathcal{M}_{T,Q_0}\big(|b_i-\langle b_i \rangle_{R_{Q_0}}|f\big)(x)>\alpha_nC_{T}\|(b_i-\langle b_i \rangle_{R_{Q_0}})f\|_{A,3Q_0}\Big\}.
\end{align*}
Then define $E_3,E_4$:
\begin{align*}
& E_3:=\Big\{x\in Q_0:\ |f(x)|>\alpha_n\|f\|_{A,3Q_0}\Big\}\bigcup \Big\{x\in Q_0:\ \mathcal{M}_{T,Q_0}(f)(x)>\alpha_nC_{T}\|f\|_{A,3Q_0}\Big\}, \\
& E_4:=\Big\{x\in Q_0:\ \prod_{i=1}^2|b_i(x)-\langle b_i \rangle_{R_{Q_0}}||f(x)|>\alpha_n\Big\|\prod_{i=1}^2(b_i-\langle b_i \rangle_{R_{Q_0}})f\Big\|_{A,3Q_0}\Big\}\\
&\quad \ \ \bigcup \Big\{x\in Q_0:\ \mathcal{M}_{T,Q_0}\Big(\prod_{i=1}^2|b_i-\langle b_i \rangle_{R_{Q_0}}|f\Big)(x)>\alpha_nC_{T}\Big\|\prod_{i=1}^2(b_i-\langle b_i \rangle_{R_{Q_0}})f\Big\|_{A,3Q_0}\Big\},
\end{align*}
where the constant $\alpha_n$ will be choosen later. Let $E=\bigcup_{i=1}^4E_i$. The convexity of $A$, together with Lemma \ref{lem5.1}~(2), yields that
\begin{align*}
|E_1|&\leq \frac{\int_{Q_0}\big|(b_1(x)-\langle b_1\rangle_{R_{Q_0}})f(x)\big|dx}{\alpha_n\|(b_1-\langle b_1\rangle_{R_{Q_0}})f\|_{A,3Q_0}}
+\int_{\mathbb{R}^n}A\bigg(\frac{|(b_1(x)-\langle b_1 \rangle_{R_{Q_0}})f(x)|}{\alpha_n \|(b_1-\langle b_1\rangle_{R_{Q_0}})f\|_{A,3Q_0}}\bigg)dx\\
&\leq3^n\frac{c_n}{\alpha_n}|Q_0|.
\end{align*}
The same reasoning applies to the other $E_i$, we obtain
 $$|E_i|\leq 3^n\frac{c_n}{\alpha_n}|Q_0|,\quad \hbox{for\ }
i=2,3,4.$$
We then choose $\alpha_n$
big enough such that $|E|\leq\frac{1}{2^{n+2}}|Q_0|$.

Applying the Calder\'{o}n-Zygmund decomposition to the function $\chi_{E}$ on the cube $Q_{0}$ at height $\lambda=\frac{1}{2^{n+1}}$ yields that, there exists a sequence of pairwise disjoint cubes $\{P_j\} \subset \mathcal{D}(Q_{0})$ such that
$$
\chi_{E}(x) \leq \frac{1}{2^{n+1}},\ \text{a.e.}\ x \notin \bigcup_{j} P_j.
$$
It follows that $|E \backslash \bigcup_{j} P_{j}|=0$. Furthermore, it is easy to check that
$$\sum_{j}|P_j|=\Big|\bigcup_{j} P_{j}\Big|\leq 2^{n+1}|E|\leq \frac{1}{2}|Q_0|,$$
and
$$\frac{1}{2^{n+1}}\leq\frac{1}{|P_j|}\int_{P_j}\chi_{E}(x)dx=\frac{|P_j \bigcap E|}{|P_j|}\leq \frac{1}{2},$$
which implies that $|P_j\cap E^c|>0$.

We note that for each $P_j$, the fact $P_j\cap E^c\neq \emptyset$ gives that
$$\mathcal{M}_{T,Q_0}(f)(x)\leq\alpha_nC_{T}\|f\|_{A,3Q_0}.$$
Then,
\begin{equation}\label{ie9}
\mathop{\mathrm{ess\,sup}}\limits_{\xi \in P_{j}}\Big|T(f\chi_{3Q_0\backslash 3P_j})(\xi)\Big|\leq \alpha_n C_{T} \|f\|_{A,3Q_0}.
\end{equation}
Using the same argument as above, we know that for $i=1,2$,
$$\mathop{\mathrm{ess\,sup}}\limits_{\xi \in P_{j}}\Big|T\big(|b_i-\langle b_i\rangle_{R_{Q_0}}|f\chi_{3Q_0\backslash 3P_j}\big)(\xi)\Big|\leq \alpha_n C_{T} \|(b_i-\langle b_i\rangle_{R_{Q_0}})f\|_{A,3Q_0};$$
$$\mathop{\mathrm{ess\,sup}}\limits_{\xi \in P_{j}}\Big|T\Big(\prod_{i=1}^2|b_i-\langle b_i\rangle_{R_{Q_0}}|f\chi_{3Q_0\backslash 3P_j}\Big)(\xi)\Big|\leq \alpha_n C_{T} \Big\|\prod_{i=1}^2(b_i-\langle b_i\rangle_{R_{Q_0}})f\Big\|_{A,3Q_0}.$$
These inequalities, together with \eqref{ie9}, yield that
\begin{equation}\label{ie5.3}
\begin{aligned}
\sum_{j}&|T_{\vec{b}}(f\chi_{3Q_0 \backslash 3P_j})(x)|\chi_{P_j}(x)\\
&\lesssim \Big(\Big\|\prod_{i=1}^2(b_i-\langle b_i \rangle_{R_{Q_0}})f\Big\|_{A,3Q_0}+\big|b_1(x)-\langle b_1 \rangle_{R_{Q_0}}\big|\big\|(b_2-\langle b_2 \rangle_{R_{Q_0}})f\big\|_{A,3Q_0}\\
&+\big|b_2(x)-\langle b_2 \rangle_{R_{Q_0}}\big|\big\|(b_1-\langle b_1 \rangle_{R_{Q_0}})f\big\|_{A,3Q_0}
+\prod_{i=1}^2\big|b_i(x)-\langle b_i \rangle_{R_{Q_0}}\big|\|f\|_{A,3Q_0}\Big)\chi_{Q_0}(x).
\end{aligned}
\end{equation}

On the other hand, by Lemma \ref{lem5.2} and Lemma \ref{lem5.1} (1), it holds that
$$
  \big|T(f\chi_{3Q_0})(x)\big|\leq C_{n,T}|f(x)|+\mathcal{M}_{T,Q_0}f(x),
$$
\begin{align*}
\Big|T\Big(\prod_{i=1}^2&(b_i-\langle b_i \rangle_{R_{Q_0}})f\chi_{3Q_0}\Big)(x)\Big|\\
&\leq C_{n,T}\prod_{i=1}^2\big|b_i(x)-\langle b_i \rangle_{R_{Q_0}}\big||f(x)|
+\mathcal{M}_{T,Q_0}\Big(\prod_{i=1}^2|b_i-\langle b_i \rangle_{R_{Q_0}}|f\Big)(x),
\end{align*}
and for $i=1,2$,
 $$
  \big|T\big((b_i-\langle b_i \rangle_{R_{Q_0}})f\chi_{3Q_0}\big)(x)\big|\leq C_{n,T}\big|b_i(x)-\langle b_i \rangle_{R_{Q_0}}\big||f(x)|+\mathcal{M}_{T,Q_0}\big(|b_i-\langle b_i \rangle_{R_{Q_0}}|f\big)(x).
$$
Furthermore, noting $|E \backslash \bigcup_{j} P_{j}|=0$, then a simple calculation gives
\begin{equation}\label{ie5.4}
\begin{aligned}
|T_{\vec{b}}&(f\chi_{3Q_0})(x)|\chi_{{Q_0}\backslash \cup_jP_j}(x) \\
&\leq C_{n,T}\Big(\Big\|\prod_{i=1}^2(b_i-\langle b_i \rangle_{R_{Q_0}})f\Big\|_{A,3Q_0}+\big|b_1(x)-\langle b_1 \rangle_{R_{Q_0}}\big|\big\|(b_2-\langle b_2 \rangle_{R_{Q_0}})f\big\|_{A,3Q_0}\\
&+\big|b_2(x)-\langle b_2 \rangle_{R_{Q_0}}\big|\big\|(b_1-\langle b_1 \rangle_{R_{Q_0}})f\big\|_{A,3Q_0}
+\prod_{i=1}^2\big|b_i(x)-\langle b_i \rangle_{R_{Q_0}}\big|\|f\|_{A,3Q_0}\Big)\chi_{Q_0}(x).
\end{aligned}
\end{equation}
Combining \eqref{ie5.3} and \eqref{ie5.4}, we complete the proof of Theorem \ref{thm5.1}.
\end{proof}

\section{ Proofs of Theorem \ref{thm5.2}}\label{sec4}

We need to introduce some notations. Let $\mathcal{D}$ be a dyadic lattice and $k \in \mathbb{N}$, denote
$$
\mathcal{F}_k:=\{Q \in \mathcal{D}: 4^{-k-1}<\|f\|_{A, Q} \leq 4^{-k}\},
$$
where $A$ is a Young function satisfying
\begin{equation}\label{ie5.5}
A(4t)\leq \Lambda_{A}A(t),\quad  t>0,\ \Lambda_{A} \geq 1.
\end{equation}

The next lemma (see \cite[Lemma 4.3]{ler5}) plays a key role in the proof of Theorem \ref{thm5.2} , which builds a bridge between sparse families and Young functions.

\begin{lemma}[\cite{ler5}]\label{lem5.3}
Suppose that the family $\mathcal{F}_k$ is $(1-\frac{1}{2 \Lambda_{A}})$-sparse. Let $w$ be a weight and $E$ be an arbitrary measurable set with $w(E)<\infty$. Then for every Young function $\varphi$,
$$
\int_E\left(\sum_{Q \in \mathcal{F}_k} \chi_Q(x)\right) w(x) dx \leq 2^k w(E)+\frac{4 \Lambda_{A}}{\bar{\varphi}^{-1}\left(\left(2 \Lambda_{A}\right)^2\right)} \int_{\mathbb{R}^n} A\left(4^k|f(x)|\right) M_{\varphi}w(x)dx.
$$
\end{lemma}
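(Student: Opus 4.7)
The plan is to control $\sum_{Q\in\mathcal{F}_k}w(E\cap Q)$ by splitting the sparse family at a density threshold for $E$ and estimating the two pieces separately. Two standard inputs drive the argument. First, the defining inequality $\|f\|_{A,Q}>4^{-k-1}$ for $Q\in\mathcal{F}_k$, together with the doubling hypothesis \eqref{ie5.5}, yields the size estimate
$$|Q|<\int_Q A(4^{k+1}|f(x)|)\,dx\le\Lambda_A\int_Q A(4^k|f(x)|)\,dx,$$
which converts $|Q|$ into an integral of $A(4^k|f|)$. Second, the generalized H\"older inequality for the Orlicz conjugate pair $(\varphi,\bar\varphi)$ gives
$$w(E\cap Q)\le 2|Q|\,\|\chi_E\|_{\bar\varphi,Q}\,\|w\|_{\varphi,Q}\le 2|Q|\,\|\chi_E\|_{\bar\varphi,Q}\inf_{y\in Q}M_\varphi w(y),$$
and the Luxemburg definition gives the identity $\|\chi_E\|_{\bar\varphi,Q}=1/\bar\varphi^{-1}(|Q|/|E\cap Q|)$.

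Setting $\tau:=1/\bar\varphi^{-1}((2\Lambda_A)^2)$, I next partition
$$\mathcal{F}_k=\mathcal{F}_k^{\mathrm{low}}\cup\mathcal{F}_k^{\mathrm{high}},\qquad \mathcal{F}_k^{\mathrm{low}}:=\{Q\in\mathcal{F}_k:|E\cap Q|/|Q|\le(2\Lambda_A)^{-2}\},$$
so that the bound $\|\chi_E\|_{\bar\varphi,Q}\le\tau$ holds exactly on $\mathcal{F}_k^{\mathrm{low}}$. For the low-density family I plan to combine the H\"older bound (with $\|\chi_E\|_{\bar\varphi,Q}\le\tau$), the size estimate $|Q|\le\Lambda_A\int_Q A(4^k|f|)\,dx$, the sparseness of $\mathcal{F}_k$ via the pairwise disjoint sets $E_Q$ with $|E_Q|\ge(1-\tfrac{1}{2\Lambda_A})|Q|\ge\tfrac{1}{2}|Q|$, and the pointwise estimate $\inf_Q M_\varphi w\le M_\varphi w(y)$ on $Q$; the combination is to be arranged so that summing over $\mathcal{F}_k^{\mathrm{low}}$ collapses to an integral on $\mathbb{R}^n$, yielding
$$\sum_{Q\in\mathcal{F}_k^{\mathrm{low}}}w(E\cap Q)\le\frac{4\Lambda_A}{\bar\varphi^{-1}((2\Lambda_A)^2)}\int_{\mathbb{R}^n} A(4^k|f(x)|)\,M_\varphi w(x)\,dx,$$
with the constants $2$ (H\"older), $\Lambda_A$ (size), $\gamma^{-1}\le 2$ (sparseness), and $\tau$ (threshold) combining to the prefactor $4\Lambda_A\tau$.

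For the high-density family I rewrite
$$\sum_{Q\in\mathcal{F}_k^{\mathrm{high}}}w(E\cap Q)=\int_E\Big(\sum_{Q\in\mathcal{F}_k^{\mathrm{high}}}\chi_Q(x)\Big)w(x)\,dx,$$
so the task reduces to the pointwise overlap bound $\sum_{Q\in\mathcal{F}_k^{\mathrm{high}}}\chi_Q\le 2^k$ on $E$. My plan is to derive this via an iterated peeling of the dyadic sparse tree, using the uniform ceiling $\|f\|_{A,Q}\le 4^{-k}$ on $\mathcal{F}_k$ together with the sparseness and the high-density condition to cap the depth of nested high-density chains at $k$ generations, each generation doubling the admissible pointwise overlap through the dyadic structure. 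The main obstacle is exactly this depth bound matching the exponent $2^k$ in the lemma; once it is in place, summing the two contributions delivers the stated inequality.
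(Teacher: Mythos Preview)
Your low-density half is the right idea, but the size estimate $|Q|\le\Lambda_A\int_Q A(4^k|f|)\,dx$ you wrote is not enough to make $\sum_Q$ collapse, because the cubes overlap. What is actually needed is the refined bound $|Q|\le 2\Lambda_A\int_{E_Q}A(4^k|f|)\,dx$ on the pairwise disjoint sets $E_Q$; this is where the \emph{upper} bound $\|f\|_{A,P}\le 4^{-k}$ on the $\mathcal F_k$-children $P$ of $Q$ enters (one subtracts $\sum_P\int_P A(4^{k+1}|f|)\le\Lambda_A\sum_P|P|\le\tfrac12|Q|$ from $\int_Q A(4^{k+1}|f|)>|Q|$). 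That refined estimate is exactly what the paper extracts from the proof in \cite{ler5} and records as \eqref{ie5.11}.

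The genuine gap is your high-density step. The pointwise claim $\sum_{Q\in\mathcal F_k^{\mathrm{high}}}\chi_Q\le 2^k$ does not follow from the hypotheses: sparseness forces $|Q_{i+1}|\lesssim(2\Lambda_A)^{-1}|Q_i|$ along a chain but gives no bound on chain \emph{length}; the defining window $4^{-k-1}<\|f\|_{A,Q}\le 4^{-k}$ has ratio $4$ independent of $k$; and the density condition is trivially met along an entire chain once $E$ contains the top cube. Nothing here ties depth to $k$, so your ``$k$ generations, each doubling'' heuristic has no footing. In \cite{ler5} the term $2^k w(E)$ arises not from a density threshold but from a split by \emph{generations}: writing $\mathcal F_k=\bigcup_{m\ge 0}\mathcal F_k^m$ (with $\mathcal F_k^0$ the maximal cubes and $\Omega_m=\bigcup_{Q\in\mathcal F_k^m}Q$), one uses the trivial bound $w(E\cap\Omega_m)\le w(E)$ for $0\le m<2^k$, while for $m\ge 2^k$ one applies the generalized H\"older inequality at the scale of the top ancestor $Q^*\in\mathcal F_k^0$, combined with the iterated sparseness bound $|\Omega_m\cap Q^*|\le(2\Lambda_A)^{-m}|Q^*|$ and the refined size estimate on $E_{Q^*}$. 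Here $2^k$ is simply the chosen truncation level, not the outcome of a depth bound. This also explains why the applications in Section~\ref{sec4} carry $\bar\varphi^{-1}\big((2\Lambda_A)^{2^k}\big)$ rather than $\bar\varphi^{-1}\big((2\Lambda_A)^{2}\big)$: the exponent $2$ in the displayed lemma is a misprint for $2^k$.
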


Now we are in the position to demonstrate Theorem \ref{thm5.2}.
\begin{proof}[Proof of Theorem \ref{thm5.2}]
Similar to Theorem $\ref{thm5.1}$, we only discuss the case $m=2$.
Let $$A=c_n+\int_{1}^{\infty}\frac{\varphi^{-1}_{(2,2)}(t)A_2(\log ^2(e+t))}{t^2\log^3(e+t)}dt,B=c_n+\beta_{1,0}+\int_{1}^{\infty}\frac{\varphi^{-1}_{(1,0)}\circ \Phi^{-1}_{1}(t)A_{1}(\log^{3}(e+t))}{t^2\log^3(e+t)}dt,$$
$$ C=c_n+\beta_{2,0}+\int_{1}^{\infty}\frac{\varphi^{-1}_{(2,0)}\circ \Phi^{-1}_{2}(t)A_{0}(\log^{5}(e+t))}{t^2\log^4(e+t)}dt\ $$and $$ D=c_n+\beta_{2,1}+\int_{1}^{\infty}\frac{\varphi^{-1}_{(2,1)}\circ \Phi^{-1}_{1}(t)A_{0}(\log^{5}(e+t))}{t^2\log^4(e+t)}dt.$$
 It suffices to show that for any $\lambda>0$, the following estimate holds
 \begin{equation}\label{ie5.6}
 \begin{aligned}
 w\big(\{x\in \mathbb{R}^n:|T_{\vec b}f(x)|>\lambda\}\big)
 &\lesssim \quad A\int_{\mathbb{R}^n}A_2\Big(\frac{|f(x)|}{\lambda}\Big)M_{\varphi_{(2,2)}}w(x)dx\\
 &~\quad+B\int_{\mathbb{R}^n}A_1
 \Big(\frac{|f(x)|}{\lambda}\Big)M_{\Phi_1\circ \varphi_{(1,0)}}w(x)dx\\
  &~\quad+C\int_{\mathbb{R}^n}A_0
  \Big(\frac{|f(x)|}{\lambda}\Big)M_{\Phi_2\circ \varphi_{(2,0)}}w(x)dx\\
  &~\quad+D\int_{\mathbb{R}^n}A_0
  \Big(\frac{|f(x)|}{\lambda}\Big)M_{\Phi_1\circ \varphi_{(2,1)}}w(x)dx,
 \end{aligned}
 \end{equation}
where $\beta_{1,0}$, $\beta_{2,0}$ and $\beta_{2,1}$ are determined constants.
By Theorem \ref{thm5.1}, it suffices to verify \eqref{ie5.6} holds for every sparse operators $\mathcal{A}^{\vec \gamma}_{A_0,\mathcal S}$, $\vec \gamma\in \{0,1\}^2$. We consider three cases respectively.\\
\textbf{Case 1. The estimate for $\mathcal{A}^{(0,0)}_{A_0,\mathcal S}(\vec{b},f)$.} The facts  $A^{-1}_{i}(t)\bar{A}^{-1}_{i-1}(t)C^{-1}(t)\leq ct,\,\|b_i\|_{\mathrm {BMO}}=1,\,i=1,2$, together with the generalized H\"{o}lder's inequality, yield that
$$\mathcal{A}^{(0,0)}_{A_0,\mathcal S}(\vec{b},f)(x)\lesssim \sum_{Q\in \mathcal{S}}\|f\|_{A_2,Q}\chi_{Q}(x)=:\mathcal{A}_{A_2}f(x).$$
Let
$$E_{(0,0)}:=\left\{x\in \mathbb{R}^n:\ \mathcal{A}_{A_2}f(x)>4,\,M_{A_2}f(x)\leq \frac{1}{4}\right\}.$$
If one can prove
\begin{equation}\label{ie5.7}
w\big(E_{(0,0)}\big)\lesssim \int_{1}^{\infty}\frac{\varphi^{-1}_{(2,2)}(t)A_2(\log ^2(e+t))}{t^2\log^3(e+t)}dt\int_{\mathbb{R}^n}A_2(|f(x)|)M_{\varphi_{(2,2)}}w(x)dx,
\end{equation}
Then, using homogeneity and Lemma \ref{lem5.4}, the endpoint estimate for $\mathcal{A}^{(0,0)}_{A_0,\mathcal S}(\vec{b},f)$ follows immediately.

Let us now turn to the proof of \eqref{ie5.7}. Denote 
$$\mathcal{S}_{k,A_2}:=\big\{Q\in \mathcal{S}:\ 4^{-k-1}<\|f\|_{A_2, Q} \leq 4^{-k} \big\},$$
and
$$\mathcal{A}_{k,A_2}f(x):=\sum_{Q\in \mathcal{S}_{k,A_2}}\|f\|_{A_2,Q}\chi_{Q}(x).$$
If $Q\in \mathcal{S}$ and $E_{(0,0)}\bigcap Q\neq \emptyset$, then $\|f\|_{A_2,Q}\leq \frac{1}{4}$, which indicates that
$$\mathcal{A}_{A_2}f(x)\leq \sum_{k=1}^{\infty}\mathcal{A}_{k,A_2}f(x),\quad x\in E_{(0,0)}.$$
The submultiplicability of $A_2$ implies that \eqref{ie5.5} holds with $\Lambda_{A_2}=A_2(4)$. Combining Lemma \ref{lem5.3} (taking $\mathcal{F}_k=\mathcal{S}_{k,A_2}$) and the fact that $\mathcal{A}_{k,A_2}f(x)\leq 4^{-k}\sum_{Q\in \mathcal{S}_{k,A_2}}\chi_{Q}(x)$ gives
$$
\int_{E_{(0,0)}}\mathcal{A}_{k,A_2}f(x)w(x)dx\leq \frac{w\big(E_{(0,0)}\big)}{2^{k}}+\frac{c\cdot4^{-k+1}A_2(4^k)}{\bar{\varphi}^{-1}_{(2,2)}
\Big((2\Lambda_{A_{2}})^{2^k}\Big)}\int_{\mathbb{R}^n}
A_2(|f(x)|)M_{\varphi_{(2,2)}}w(x)dx.
$$
With this estimate in hand, we obtain
\begin{equation}\label{ie5.8}
\begin{aligned}
  w\big(E_{(0,0)}\big)&\leq \frac{1}{4}\int_{E_{(0,0)}}\mathcal{A}_{A_2}f(x)w(x)dx\leq \frac{1}{4}\sum_{k=1}^{\infty}\int_{E_{(0,0)}}\mathcal{A}_{k,A_2}f(x)w(x)dx\\
  &\leq\frac{1}{4}w\big(E_{(0,0)}\big)+c\sum_{k=1}^{\infty}\frac{4^{-k}A_2(4^k)}{\bar{\varphi}^{-1}_{(2,2)}(2^{2^k})}
  \int_{\mathbb{R}^n}
A_2(|f(x)|)M_{\varphi_{(2,2)}}w(x)dx.
\end{aligned}
\end{equation}
Observing that
\begin{equation}\label{ie5.9}
\int_{2^{2^{k-1}}}^{2^{2^k}}\frac{1}{t\log(e+t)}dt\geq c,
\end{equation}
and the function
$\frac{A(t)}{t}$ is non-decreasing, one may obtain
\begin{align*}
\sum_{k=1}^{\infty}\frac{4^{-k}A_2(4^k)}{\bar{\varphi}^{-1}_{(2,2)}(2^{2^k})}&\lesssim
\sum_{k=1}^{\infty}\frac{4^{-k}A_2(4^k)}{\bar{\varphi}^{-1}_{(2,2)}(2^{2^k})}\int_{2^{2^{k-1}}}^{2^{2^k}}\frac{1}{t\log(e+t)}dt
\\
&\lesssim \sum_{k=1}^{\infty}\frac{A_2(4^{k-1})}{4^{k-1}}\int_{2^{2^{k-1}}}^{2^{2^k}}\frac{1}{t\bar{\varphi}^{-1}_{(2,2)}(t)
\log(e+t)}dt\\
&\lesssim \sum_{k=1}^{\infty}\int_{2^{2^{k-1}}}^{2^{2^k}}\frac{A_2(\log^2(e+t))}{t\bar{\varphi}^{-1}_{(2,2)}(t)
\log(e+t)\log^2(e+t)}dt\\
&\lesssim
\int_{1}^{\infty}\frac{\varphi^{-1}_{(2,2)}(t)A_2(\log^2(e+t))}{t^2\log^3(e+t)}dt.
\end{align*}
Plugging this estimate into \eqref{ie5.8}, it follows the desired conclusion \eqref{ie5.7}.

\textbf{Case 2. The estimate for $\mathcal{A}^{(1,0)}_{A_0,\mathcal S}(\vec{b},f)$.}
The condition $A^{-1}_{1}(t)\bar{A}^{-1}_{0}(t){C}^{-1}(t)\leq ct$ and the generalized H\"{o}lder's inequality indicate that
$$\mathcal{A}^{(1,0)}_{A_0,\mathcal S}(\vec{b},f)(x)\lesssim \sum_{Q\in \mathcal{S}}|b_1(x)-\langle b_1\rangle_Q|\|f\|_{A_1,Q}\chi_{Q}(x)=:\mathcal{A}^{(1,0)}_{A_1}f(x).$$
Define
$$E_{(1,0)}:=\left\{x\in \mathbb{R}^n:\ \mathcal{A}^{(1,0)}_{A_1}f(x)>8,\,M_{A_1}f(x)\leq \frac{1}{4}\right\}.$$
Using homogeneity and Lemma \ref{lem5.4}, it suffices to show that
\begin{equation}\label{ie5.10}
\begin{aligned}
w\big(&E_{(1,0)}\big)\\
&\lesssim\Big(\beta_{1,0}+\int_{1}^{\infty}\frac{\varphi^{-1}_{(1,0)}\circ \Phi^{-1}_{1}(t)A_{1}(\log^{3}(e+t))}{t^2\log^3(e+t)}dt\Big)
\int_{\mathbb{R}^n}A_1(|f(x)|)M_{\Phi_1\circ \varphi_{(1,0)}}w(x)dx.
\end{aligned}
\end{equation}
Let
$$\mathcal{S}_{k,A_1}:=\big\{Q\in \mathcal{S}:\ 4^{-k-1}<\|f\|_{A_1, Q} \leq 4^{-k} \big\},$$
and for each $Q\in \mathcal{S}_{k,A_1}$, define $\mathcal{H}^{0}_k(Q)$ as follows
\begin{equation}\label{ie13}
\mathcal{H}^{0}_k(Q):=\Big\{x\in Q:\ |b_1(x)-\langle b_1 \rangle_{Q}|>\Big(\frac{8}{5}\Big)^k\Big\}.
\end{equation}
If $Q\in \mathcal{S}$ and $E_{(1,0)}\bigcap Q\neq \emptyset$, then $\|f\|_{A_1,Q}\leq \frac{1}{4}$. This shows that for any $x\in E_{(1,0)}$,
\begin{align*}
\mathcal{A}^{(1,0)}_{A_1}f(x)&\leq \sum_{k=1}^{\infty}\sum_{Q\in \mathcal{S}_{k,A_1}}|b_1(x)-\langle b_1\rangle_{Q}|\|f\|_{A_1,Q}\chi_{\mathcal{H}^{0}_k(Q)}(x)\\
&\quad +\sum_{k=1}^{\infty}\Big(\frac{8}{5}\Big)^k\sum_{Q\in \mathcal{S}_{k,A_1}}\|f\|_{A_1,Q}\chi_Q(x)\\
&=:\mathcal{A}^{0}_{A_1}f(x)+\mathcal{A}^{1}_{A_1}f(x).
\end{align*}
Let
$$E^{0}_{(1,0)}:=\{x\in E_{(1,0)}:\ \mathcal{A}^{0}_{A_1}f(x)>3\},\,\,E^{1}_{(1,0)}:=\{x\in E_{(1,0)}:\ \mathcal{A}^{1}_{A_1}f(x)>5\},$$
then $w\big(E_{(1,0)}\big)\leq w\big(E^{0}_{(1,0)}\big)+w\big(E^{1}_{(1,0)}\big).$
It follows from the proof of \cite[Lemma 4.3]{ler5} that there exist pairwise disjoint $E_Q\subset Q \in \mathcal{S}_{k,A_1}$ such that
\begin{equation}\label{ie5.11}
1\leq\frac{1}{|Q|}\int_{E_Q}A_{1}(4^k|f(x)|)dx,
\end{equation}
which yields that
\begin{equation}\label{ie5.12}
\begin{aligned}
w\big(E^{0}_{(1,0)}\big)&\leq \frac{1}{3}\int_{E^{0}_{(1,0)}}\mathcal{A}^{0}_{A_1}f(x)w(x)dx\\
&\lesssim\sum_{k=1}^{\infty}\sum_{Q\in \mathcal{S}_{k,A_1}}\frac{1}{4^k}\Big(\frac{1}{|Q|}\int_{\mathcal{H}^{0}_k(Q)}|b_1(x)-\langle b_1\rangle_Q|w(x)dx\Big)\Big(\int_{E_Q}A_{1}(4^k|f(x)|)dx\Big).
\end{aligned}
\end{equation}
By the generalized H\"{o}lder's inequality, it holds that
\begin{equation}\label{ie5.13}
\frac{1}{|Q|}\int_{\mathcal{H}^{0}_k(Q)}|b_1(x)-\langle b_1\rangle_Q|w(x)dx\lesssim\|w\chi_{\mathcal{H}^{0}_k(Q)}\|_{L(\log L),Q}.
\end{equation}
Define 
$\Psi^{-1}_{(1,0)}(t):=\frac{\Phi^{-1}_{1}(t)}{\varphi^{-1}_{(1,0)}\circ \Phi^{-1}_{1}(t)}.$
Note that $\Phi_1$ and $\varphi_{(1,0)}(t)/t$ are strictly increasing functions, then $\Psi_{(1,0)}$ is strictly increasing. The generalized H\"{o}lder's inequality implies that
\begin{equation}\label{ie5.14}
\begin{aligned}
\|w\chi_{\mathcal{H}^{0}_k(Q)}\|_{L(\log L),Q}&\lesssim \|\chi_{\mathcal{H}^{0}_k(Q)}\|_{\Psi_{(1,0)},Q}\|w\|_{\Phi_1\circ \varphi_{(1,0)},Q}\\
&=\frac{c}{\Psi^{-1}_{(1,0)}(|Q|/|\mathcal{H}^{0}_k(Q)|)}\|w\|_{\Phi_1\circ \varphi_{(1,0)},Q}.
\end{aligned}
\end{equation}
On the other hand, let $\alpha_{1,k}=\min\big\{1,\,e^{-{\frac{(8/5)^k}{2^ne}}+1}\big\}$, then the John-Nirenberg inequality ensures that $|\mathcal{H}^{0}_k(Q)|\leq \alpha_{1,k}|Q|$, which together with \eqref{ie5.12}-\eqref{ie5.14}, yields that
\begin{equation}\label{ie5.15}
\begin{aligned}
w\big(E^{0}_{(1,0)}\big)&\lesssim\sum_{k=1}^{\infty}\frac{1}{\Psi^{-1}_{(1,0)}(1/\alpha_{1,k})4^{k}}\sum_{Q \in \mathcal{S}_{k,A_1}}\|w\|_{\Phi_1\circ \varphi_{(1,0)},Q}\int_{E_Q}A_{1}(4^k|f(x)|)dx\\
&\lesssim\sum_{k=1}^{\infty}\frac{1}{\Psi^{-1}_{(1,0)}(1/\alpha_{1,k})}\frac{A_1(4^k)}{4^k}\int_{\mathbb{R}^n}A_1(|f(x)|)
M_{\Phi_{1}\circ\varphi_{(1,0)}}w(x)dx.
\end{aligned}
\end{equation}
In order to obtain a more precise estimate of the constant $\sum_{k=1}^{\infty}\frac{1}{\Psi^{-1}_{(1,0)}(1/\alpha_{1,k})}$, 
we choose positive integer $a_{n,1}$ such that for every $k>a_{n,1}$, $\frac{1}{\alpha_{1,k-1}}=e^{\frac{(8/5)^{k-1}}{2^ne}-1}\geq e^2$. Let $\frac{1}{\varrho_1}=\frac{\log 4}{\log(8/5)}$, it is easy to verify 
\begin{equation*}
 \int_{\frac{1}{\alpha_{1,k-1}}}^{\frac{1}{\alpha_{1,k}}}\frac{1}{t\log(e+t)}dt\geq c.
\end{equation*}
This, together with the submultiplicativity of $A_1$, the monotonicity of $A_1(t)/t$ and the fact $\frac{1}{\varrho_1}\leq 3$, yields that
\begin{equation}\label{ie5.16}
\begin{aligned}
\sum_{k=1}^{\infty}\frac{1}{\Psi^{-1}_{(1,0)}(1/\alpha_{1,k})}\frac{A_1(4^k)}{4^k}&\leq \sum_{k=1}^{a_{n,1}}\frac{A_1(4^k)\varphi^{-1}_{(1,0)}\circ \Phi^{-1}_{1}(1/\alpha_{1,k})}{4^k\Phi^{-1}_{1}(1/\alpha_{1,k})}+\sum_{k=a_{n,1}}^{\infty}
\frac{A_1(4^k)}{4^k\Psi^{-1}_{(1,0)}(1/\alpha_{1,k})}\\
&\lesssim
\beta_{1,0}+\sum_{k=a_{n,1}}^{\infty}\frac{A_1(4^{k-1})}{4^{k-1}\Psi^{-1}_{(1,0)}(1/\alpha_{1,k})}\int_{\frac{1}{\alpha_{1,k-1}}}^{\frac{1}{\alpha_{1,k}}}
\frac{1}{t\log(e+t)}dt\\
&\lesssim \beta_{1,0}+\int_{\frac{1}{\alpha_{1,k-1}}}^{\frac{1}{\alpha_{1,k}}}
\frac{A_1(\log^{1/\varrho_1}(e+t))}{\Psi^{-1}_{(1,0)}(t)t\log(e+t)\log^{1/\varrho_1}(e+t)}dt\\
&\lesssim
\beta_{1,0}+\int_{1}^{\infty}\frac{\varphi^{-1}_{(1,0)}\circ \Phi^{-1}_{1}(t)A_1(\log^{3}(e+t))}{\Phi^{-1}_{1}(t)t\log^4(e+t)}dt\\
&\lesssim
\beta_{1,0}+\int_{1}^{\infty}\frac{\varphi^{-1}_{(1,0)}\circ \Phi^{-1}_{1}(t)A_1(\log^{3}(e+t))}{t^2\log^3(e+t)}dt,
\end{aligned}
\end{equation}
where $\beta_{1,0}=\sum_{k=1}^{a_{n,1}}\frac{A_1(4^k)\varphi^{-1}_{(1,0)}\circ \Phi^{-1}_{1}(1/\alpha_{1,k})}{4^k\Phi^{-1}_{1}(1/\alpha_{1,k})}$. 

In order to obtain the weak type endpoint estimate \eqref{ie5.10} of $w\big(E_{(1,0)}\big)$, it remains to establish the estimate of $w\big(E^{1}_{(1,0)}\big)$. Applying Lemma \ref{lem5.3} with the Young function $\psi_{(1,0)}=\Phi_1\circ \varphi_{(1,0)}$, it may lead to
\begin{equation*}
\begin{aligned}
 \int_{E^1_{(1,0)}}\mathcal{A}^{1}_{A_1}f(x)w(x)dx \leq & \sum_{k=1}^{\infty}\Big(\frac{4}{5}\Big)^kw(E^{1}_{(1,0)})\\
 &+c\sum_{k=1}^{\infty}\Big(\frac{8}{5}\Big)^k
 \frac{A_1(4^k)}{\bar{\psi}^{-1}_{(1,0)}(2^{2^k})4^{k}}
 \int_{\mathbb{R}^n}A_1(|f(x)|)M_{\psi_{(1,0)}}w(x)dx.
\end{aligned}
\end{equation*}
Then
\begin{align*}
w\big(E^{1}_{(1,0)}\big)&\leq \frac{1}{5}\int_{E^1_{(1,0)}}\mathcal{A}^{1}_{A_1}f(x)w(x)dx\\
&\leq \frac{4}{5}w\big(E^{1}_{(1,0)}\big)+c\sum_{k=1}^{\infty}\Big(\frac{8}{5}\Big)^k
\frac{A_1(4^k)}{\bar{\psi}^{-1}_{(1,0)}(2^{2^k})4^{k}}
\int_{\mathbb{R}^n}A_1(|f(x)|)M_{\psi_{(1,0)}}w(x)dx.
\end{align*}
Furthermore, by \eqref{ie5.9}, one can get
\begin{align*}
 \sum_{k=1}^{\infty}\Big(\frac{8}{5}\Big)^k\frac{A_1(4^k)}{\bar{\psi}^{-1}_{(1,0)}(2^{2^k})4^{k}}&\lesssim \sum_{k=1}^{\infty}2^k\frac{A_1(4^k)}{\bar{\psi}^{-1}_{(1,0)}(2^{2^k})4^{k}}\int_{2^{2^{k-1}}}^{2^{2^k}}\frac{1}{t\log(e+t)}dt\\  &\lesssim\sum_{k=1}^{\infty}\frac{A_1(4^k)}{\bar{\psi}^{-1}_{(1,0)}(2^{2^k})4^{k}}\int_{2^{2^{k-1}}}^{2^{2^k}}
 \frac{1}{t}dt\\
 &\lesssim\int_{1}^{\infty}\frac{\varphi^{-1}_{(1,0)}\circ \Phi^{-1}_{1}(t)A_{1}(\log^{3}(e+t))}{t^2\log^3(e+t)}dt.
\end{align*}
Therefore, we finish the proof of \eqref{ie5.10}.

Note that $\mathcal{A} ^{(0,1)}_{A_0,\mathcal{S}}(\vec b,f)=\mathcal{A} ^{(0,1)}_{A_0,\mathcal{S}}((b_1,b_2),f)=\mathcal{A}^{(1,0)}_{A_0,\mathcal{S}}((b_2,b_1),f)$, thus the endpoint estimate \eqref{ie5.10} also holds for $\mathcal{A}^{(0,1)}_{A_0,\mathcal{S}}(\vec b,f)$.

\textbf{Case 3. The estimate for $\mathcal{A}^{(1,1)}_{A_0,\mathcal{S}}(\vec b,f)$.}
To estimate $\mathcal{A}^{(1,1)}_{A_0,\mathcal{S}}(\vec b,f)$, we need to give some notations.
Let
\begin{align*}
E_{(1,1)}&:=\Big\{x\in \mathbb{R}^n:\ \mathcal{A}^{(1,1)}_{A_0,\mathcal{S}}(\vec b,f)(x)>40,\,M_{A_0}f(x)\leq \frac{1}{4}\Big\},\\
\mathcal{S}_{k,A_0}&:=\big\{Q\in \mathcal{S}:\ 4^{-k-1}<\|f\|_{A_0, Q} \leq 4^{-k} \big\},
\end{align*}
and for each $Q\in \mathcal{S}_{k,A_0}$ and $i=1,2$, denote
\begin{equation}\label{ie14}
\mathcal{H}^{i}_k(Q):=\Big\{x\in Q:\ |b_i(x)-\langle b_i \rangle_{Q}|>\Big(\frac{4}{3}\Big)^k\Big\}.
\end{equation}
According to the weak inequality \eqref{ie12} of Orlicz maximal operators, we only need to estimate $w\big(E_{(1,1)}\big)$.
Note that if $Q\in \mathcal{S}$ and $E_{(1,1)}\bigcap Q\neq \emptyset$, then $\|f\|_{A_0,Q}\leq \frac{1}{4}$. For every $x\in E_{(1,1)}$, it follows that 
\begin{equation}\label{ie5.17}
\begin{aligned}
\mathcal{A}^{(1,1)}_{A_0,\mathcal{S}}(\vec b,f)(x)&\leq\sum_{k=1}^{\infty}\sum_{Q\in \mathcal{S}_{k,A_0}}|b_1(x)-\langle b_1\rangle_{Q}||b_2(x)-\langle b_2\rangle_{Q}|\|f\|_{A_0,Q}\chi_{Q}(x)\\
&\leq\sum_{i=1}^2\sum_{k=1}^{\infty}\Big(\frac{4}{3}\Big)^k\sum_{Q\in \mathcal{S}_{k,A_0}}|b_i(x)-\langle b_i\rangle_{Q}|\|f\|_{A_0,Q}\chi_{\mathcal{H}^{i}_{k}(Q)}(x)\\
&\quad+\sum_{k=1}^{\infty}\sum_{Q\in \mathcal{S}_{k,A_0}}\prod_{i=1}^2|b_i(x)-\langle b_i\rangle_{Q}|\|f\|_{A_0,Q}\chi_{\mathcal{H}^1_{k}\cap\mathcal{H}^2_{k}(Q)}(x)\\
&\quad+\sum_{k=1}^{\infty}\Big(\frac{4}{3}\Big)^{2k}\sum_{Q\in \mathcal{S}_{k,A_0}}\|f\|_{A_0,Q}\chi_{Q}(x)=:\sum_{i=1}^4\mathcal{A}^{i}_{A_0}f(x).
\end{aligned}
\end{equation}
Therfore, $w\big(E_{(1,1)}\big)\leq \sum_{i=1}^4w\big(E^{i}_{(1,1)}\big)$, where
$$E^{i}_{(1,1)}=\Big\{x\in E_{(1,1)}:\ \mathcal{A}^i_{A_0}f(x)>10\Big\},\,\, i=1,2,3,4.$$
Having performed the above decomposition of $w\big(E_{(1,1)}\big)$, our focus now shifts to estimating each $w\big(E^i_{(1,1)}\big).$
Since the estimates for $w\big(E^1_{(1,1)}\big)$ and $w\big(E^2_{(1,1)}\big)$ are analogous, we only give the estimate for $w\big(E^1_{(1,1)}\big)$. In fact, \eqref{ie5.11} indicates that
\begin{align*}
 w\big(E^1_{(1,1)}\big)&\leq \frac{1}{10}\int_{E^1_{(1,1)}}\sum_{k=1}^{\infty}\Big(\frac{4}{3}\Big)^{k}\sum_{Q\in \mathcal{S}_{k,A_0}}|b_1(x)-\langle b_1\rangle_{Q}|\|f\|_{A_0,Q}\chi_{\mathcal{H}^1_{k}(Q)}(x)w(x)dx\\
  &\lesssim  \sum_{k=1}^{\infty}\sum_{Q\in \mathcal{S}_{k,A_0}}\frac{1}{3^k}\Big(\frac{1}{|Q|}\int_{\mathcal{H}^1_{k}(Q)}|b_1(x)-\langle b_1\rangle_{Q}|w(x)dx\Big)\Big(\int_{E_Q}A_0(4^k|f(x)|)dx\Big).
\end{align*}
Using the same argument as what we have done in the estimate \eqref{ie5.15} for $E^{0}_{(1,0)}$ in $\mathcal{A}^{(1,0)}_{A_0,\mathcal S}(\vec{b},f)$, we obtain
$$w\big(E^1_{(1,1)}\big)\lesssim \sum_{k=1}^{\infty }\frac{1}{\Psi^{-1}_{(2,1)}(1/\alpha_{2,k})}\frac{A_0(4^k)}{3^k}\int_{\mathbb{R}^n}A_0(|f(x)|)
M_{\Phi_{1}\circ\varphi_{(2,1)}}w(x)dx,$$
where $\alpha_{2,k}=\min\{1,e^{-\frac{(4/3)^k}{2^ne}+1}\}$ and 
$\Psi^{-1}_{(2,1)}(t):=\frac{\Phi^{-1}_{2}(t)}{\varphi^{-1}_{(2,1)}\circ \Phi^{-1}_{1}(t)}.$

In the following, we will undertake a more precise estimate for $\sum_{k=1}^{\infty }\frac{1}{\Psi^{-1}_{(2,1)}(1/\alpha_{2,k})}\frac{A_0(4^k)}{3^k}$.
Picking positive integer $a_{n,2}$ such that for $k>a_{n,2}$, $\frac{1}{\alpha_{2,k-1}}=e^{\frac{(4/3)^{k-1}}{2^ne}-1}\geq e^2$. If $\frac{1}{\varrho_2}=\frac{\log 4}{\log(4/3)}$, a simple calculation yields that
\begin{equation}\label{ie5.18}
\begin{aligned}
\sum_{k=1}^{\infty }\frac{1}{\Psi^{-1}_{(2,1)}(1/\alpha_{2,k})}\frac{A_0(4^k)}{3^k}&\leq \sum_{k=1}^{a_{n,2}}\frac{(\frac{4}{3})^kA_0(4^k)\varphi^{-1}_{(2,1)}\circ \Phi^{-1}_{1}(1/\alpha_{2,k})}{4^k\Phi^{-1}_{1}(1/\alpha_{2,k})}+\sum_{k=a_{n,2}}^{\infty}
\frac{(\frac{4}{3})^kA_0(4^k)4^{-k}}{\Psi^{-1}_{(2,1)}(1/\alpha_{2,k})} \\
&\lesssim \beta_{2,1}+\sum_{k=a_{n,2}}^{\infty}\frac{A_0(4^{k-1})}{4^{k-1}}\Big(\frac{4}{3}\Big)^k\int_{\frac{1}{\alpha_{2,k-1}}}^{\frac{1}{\alpha_{2,k}}}
\frac{1}{\Psi^{-1}_{(2,1)}(t)t\log(e+t)}dt
\\
&\lesssim \beta_{2,1}+\int_{1}^{\infty}\frac{A_0(\log^{1/\varrho_2}(e+t))}
{t\Psi^{-1}_{(2,1)}(t)\log^{1/\varrho_2}(e+t)}dt\\
&\lesssim
\beta_{2,1}+\int_{1}^{\infty}\frac{\varphi^{-1}_{(2,1)}\circ \Phi^{-1}_{1}(t)A_0(\log^{5}(e+t))}{\Phi^{-1}_{1}(t)t\log^5(e+t)}dt\\
&\lesssim
\beta_{2,1}+\int_{1}^{\infty}\frac{\varphi^{-1}_{(2,1)}\circ \Phi^{-1}_{1}(t)A_0(\log^{5}(e+t))}{t^2\log^4(e+t)}dt,
\end{aligned}
\end{equation}
where $\beta_{2,1}=\sum_{k=1}^{a_{n,2}}\frac{(\frac{4}{3})^kA_0(4^k)\varphi^{-1}_{(2,1)}\circ \Phi^{-1}_{1}(1/\alpha_{2,k})}{4^k\Phi^{-1}_{1}(1/\alpha_{2,k})}.$

We now turn to estimate $w\big(E^3_{(1,1)}\big)$. In fact,
$$w\big(E^3_{(1,1)}\big)\lesssim\sum_{k=1}^{\infty}\sum_{Q\in \mathcal{S}_{k,A_0}}\frac{1}{4^k}\Big(\frac{1}{|Q|}\int_{\tilde{Q}}
\prod_{i=1}^2|b_i(x)-\langle b_i\rangle_{Q}|w(x)dx\Big)\Big(\int_{E_Q}A_0(4^k|f(x)|)dx\Big),$$
where $\tilde{Q}:=\mathcal{H}^1_{k}(Q)\cap\mathcal{H}^2_{k}(Q).$
Then the generalized H\"{o}lder's inequality gives that
\begin{equation*}
\frac{1}{|Q|}\int_{\mathcal{H}^1_{k}(Q)\cap\mathcal{H}^2_{k}(Q)}\prod_{i=1}^2|b_i(x)-\langle b_i\rangle_{Q}|w(x)dx\lesssim \|w\chi_{\mathcal{H}^1_{k}(Q)\cap\mathcal{H}^2_{k}(Q)}\|_{L(\log L)^2,Q}.
\end{equation*}
Define 
$\Psi^{-1}_{(2,0)}(t):=\frac{\Phi^{-1}_{2}(t)}{\varphi^{-1}_{(2,0)}\circ \Phi^{-1}_{2}(t)}.$
Note that $\Phi_2$ and $\varphi_{(2,0)}(t)/t$ are strictly increasing functions, then $\Psi_{(2,0)}$ is strictly increasing, too. The generalized H\"{o}lder's inequality implies 
\begin{equation*}
\begin{aligned}
\|w\chi_{\mathcal{H}^1_{k}(Q)\cap\mathcal{H}^2_{k}(Q)}\|_{L(\log L)^2,Q}&\lesssim \|\chi_{\mathcal{H}^1_{k}(Q)\cap\mathcal{H}^2_{k}(Q)}\|_{\Psi_{(2,0)},Q}\|w\|_{\Phi_2\circ \varphi_{(2,0)},Q}\\
&=\frac{c}{\Psi^{-1}_{(2,0)}\Big(|Q|/|\mathcal{H}^1_{k}(Q)\cap\mathcal{H}^2_{k}(Q)|\Big)}\|w\|_{\Phi_2\circ \varphi_{(2,0)},Q}.
\end{aligned}
\end{equation*}
The same reasoning as in the estimate for \eqref{ie5.15} and \eqref{ie5.16} then gives
\begin{align*}
w\big(E^{3}_{(1,1)}\big)&\lesssim\sum_{k=1}^{\infty}\frac{1}{\Psi^{-1}_{(2,0)}(1/\alpha_{2,k})4^{k}}\sum_{Q \in \mathcal{S}_{k,A_0}}\|w\|_{\Phi_2\circ \varphi_{(2,0)},Q}\int_{E_Q}A_{0}(4^k|f(x)|)dx\\
&\lesssim\sum_{k=1}^{\infty}\frac{1}{\Psi^{-1}_{(2,0)}(1/\alpha_{2,k})}\frac{A_0(4^k)}{4^k}\int_{\mathbb{R}^n}A_0(|f(x)|)
M_{\Phi_{2}\circ\varphi_{(2,0)}}w(x)dx,
\end{align*}
and
\begin{equation}\label{ie5.19}
\sum_{k=1}^{\infty}\frac{1}{\Psi^{-1}_{(2,0)}(1/\alpha_{2,k})}\frac{A_0(4^k)}{4^k}\lesssim \beta_{2,0}+\int_{1}^{\infty}\frac{\varphi^{-1}_{(2,0)}\circ \Phi^{-1}_2(t)A_{0}(\log^5(e+t))}{t^2\log^4(e+t)}dt,
\end{equation}
where $\beta_{2,0}=\sum_{k=1}^{a_{n,2}}\frac{A_0(4^k)\varphi^{-1}_{(2,0)}\circ \Phi^{-1}_{2}(1/\alpha_{2,k})}{4^k\Phi^{-1}_{2}(1/\alpha_{2,k})}.$

It remains to estimate $w\big(E^4_{(1,1)}\big)$. Using Lemma \ref{lem5.3} with $\varphi=\psi_{(2,0)}:=\Phi_{2}\circ \varphi_{(2,0)}$, we have
\begin{align*}
 \int_{E^4_{(1,1)}}\mathcal{A}^4_{A_0}f(x)w(x)dx&\leq\sum_{k=1}^{\infty}\Big(\frac{8}{9}\Big)^kw\big(E^4_{(1,1)}\big)\\
 &\quad+c\sum_{k=1}^{\infty}\Big(\frac{16}{9}\Big)^k
 \frac{A_0(4^k)}{\bar{\varphi}^{-1}_{(2,0)}\Big((2\Lambda_{A_{0}})^{2^k}\Big)}\int_{\mathbb{R}^n}A_0(|f(x)|)
 M_{\psi_{(2,0)}}w(x)dx.
\end{align*}
By the definition of $E^4_{(1,1)}$, it holds that
\begin{align*}
w\big(E^4_{(1,1)}\big)&\leq \frac{1}{10}\int_{E^4_{(1,1)}}\mathcal{A}^4_{A_0}f(x)w(x)dx\\
&\leq \frac{4}{5}w\big(E^4_{(1,1)}\big)+c\sum_{k=1}^{\infty}\Big(\frac{16}{9}\Big)^k
 \frac{A_0(4^k)}{\bar{\varphi}^{-1}_{(2,0)}(2^{2^k})}\int_{\mathbb{R}^n}A_0(|f(x)|)
 M_{\psi_{(2,0)}}w(x)dx.
\end{align*}
Then, $\eqref{ie5.9}$ implies that
\begin{equation}\label{ie5.20}
\begin{aligned}
\sum_{k=1}^{\infty}\Big(\frac{16}{9}\Big)^k
\frac{A_0(4^k)}{\bar{\varphi}^{-1}_{(2,0)}(2^{2^k})}&\lesssim \sum_{k=1}^{\infty}2^k\frac{A_0(4^k)}{\bar{\psi}^{-1}_{(2,0)}(2^{2^k})4^{k}}\int_{2^{2^{k-1}}}^{2^{2^k}}
\frac{1}{t\log(e+t)}dt\\
& \lesssim\sum_{k=1}^{\infty}\frac{A_0(4^k)}{\bar{\psi}^{-1}_{(2,0)}(2^{2^k})4^{k}}\int_{2^{2^{k-1}}}^{2^{2^k}}
\frac{1}{t}dt\\
 &\lesssim\int_{1}^{\infty}\frac{\psi^{-1}_{(2,0)}(t)A_0(\log^2(e+t))}{t^2\log^2(e+t)}dt.
\end{aligned}
\end{equation}
Therefore, it follows from the estimates \eqref{ie5.18}-\eqref{ie5.20} that  $w\big(E_{(1,1)}\big)$ can be controlled by
\begin{align*}
 {}&\Big(\beta_{2,1}+\int_{1}^{\infty}\frac{\varphi^{-1}_{(2,1)}\circ \Phi^{-1}_{1}(t)A_0(\log^{5}(e+t))}{t^2\log^4(e+t)}dt\Big)\int_{\mathbb{R}^n}A_0(|f(x)|)
M_{\Phi_{1}\circ\varphi_{(2,1)}}w(x)dx\\
&+\Big(\beta_{2,0}+\int_{1}^{\infty}\frac{\varphi^{-1}_{(2,0)}\circ \Phi^{-1}_2(t)A_{0}(\log^5(e+t))}{t^2\log^4(e+t)}dt\Big)\int_{\mathbb{R}^n}A_0(|f(x)|)
M_{\Phi_{2}\circ\varphi_{(2,0)}}w(x)dx,
\end{align*}
which combining with \eqref{ie5.7} and \eqref{ie5.10} completes the proof of Theorem \ref{thm5.2}.
\end{proof}

\begin{remark}
It follows from the proof of Theorem \ref{thm5.2} that the constants 8/5 in \eqref{ie14} and 4/3 in \eqref{ie15} used in the previous proof are not essential. In fact, we can replace them with variables $r_1$ and $r_2$, where $r_1$ satisfies
$4\leq r^3_1 <8$ and $r_2$ satisfies $4\leq r^5_2<4\sqrt{2}$. Additionally, we can derive the precise expression of the constant $\beta_{m,l_1,l_2}$ as follows:
\begin{equation}\label{ie15}
\beta_{m,l_1,l_2}=\sum_{k=1}^{a_{n,l_1}}\frac{r^{l_{2}k}_{l_1}A_{m-l_1}(4^k)\varphi^{-1}_{(l_1,l_2)}
\circ\Phi^{-1}_{l_1-l_2}(1/\alpha_{r_{l_1},k})}{4^k\Phi^{-1}_{l_1-l_2}(1/\alpha_{r_{l_1},k})},
\end{equation}
where $\alpha_{r_{l_1},k}=\min\Big\{1,e^{-\frac{r^{k}_{l_1}}{2^ne}+1}\Big\}$ , $1<r^{2l_1}_{l_1}<4\leq r^{2l_1+1}_{l_1}$, and the constant $a_{n,l_1}$ satisfies when $k>a_{n,l_1}$, $e^{\frac{r^{k-1}_{l_1}}{2^ne}-1}\geq e^2$. It is worth pointing out that \eqref{ie15} is crucial for the exact constant estimate in Theorem \ref{thm5.3}.
\end{remark}

\section{Proofs of Theorem \ref{thm5.3}, Corollaries \ref{cor5.1} and \ref{cor5.2}}\label{sec5}
\begin{proof}[Proof of Theorem \ref{thm5.3}]
If $T$ is an $\omega$-Calder\'{o}n-Zygmund operator and $\omega$ satisifies a Dini condition: $\int_{0}^1\frac{\omega(t)}{t}dt<\infty$, it is easy to verify that the kernel of $T$ satisfies $L^{\infty}$-H\"{o}mander condition, see \cite[Proposition 3.1]{li3}. This means that $T$ is an $\bar{A}_0$-H\"{o}rmander operator, where $A_0(t)=t$. 
Assume that 
 $\|\vec{b}\|_{\mathrm{BMO}}=1$ and let $A_{i}(t)=\Phi_{i}(t)=t\log^{i}(e+t),\,i=1,\ldots,m$, it follows from a simple calculation that $A_{i}$ satisifies the conditions of Theorem \ref{thm5.2}. We need to show that \eqref{ie5.28} holds for the chosen Young function $A_{i}$ and $\Phi_{i}$.

We consider the following three cases.

\textbf{Case I:} $l_1=l_2=m$. In this case, we have
\begin{align*}
  K_{\varphi_{(m,m)}}&=\int_{1}^{\infty}\frac{\varphi^{-1}_{(m,m)}(t)A_m(\log^2(e+t))}{t^2\log^3(e+t)}dt=\int_{1}^{\infty}\frac{\varphi^{-1}_{(m,m)}(t)\log^m(e+\log^2(e+t))}{t^2\log(e+t)}dt.
\end{align*}
For any fixed $0<\varepsilon<1$, let $\varphi_{(m,m)}(t)=t\log^m(e+t)\log^{1+\varepsilon}(e+\log(e+t))$, then
\begin{align*}
K_{\varphi_{(m,m)}}
\lesssim \int_{\log(e+1)}^{\infty}\frac{e^u}{(e^u-e)u\log^{1+\varepsilon}(e+u)}du\lesssim\frac{1}{\varepsilon},
\end{align*}
in the first inequality, we use the fact that $\varphi^{-1}_{(m,m)}(t)\leq ct\log^{-m}(e+t)\log^{-1-\varepsilon}(e+\log(e+t))$, where $c$ is an absolute constant which is independent of $\varepsilon$.

Therefore, when $l_1=l_2=m$, we get
\begin{equation}\label{ie5.21}
\begin{aligned}
K&_{\varphi_{(m,m)}}\int_{\mathbb{R}^n}A_m(\frac{|f(x)|}{\lambda})M_{\varphi_{(m,m)}}w(x)dx\\
&\lesssim \frac{1}{\varepsilon}\int_{\mathbb{R}^n}\Phi_m(\frac{|f(x)|}{\lambda})M_{L(\log L)^m(\log \log L)^{1+\varepsilon}}w(x)dx.
\end{aligned}
\end{equation}

\textbf{Case II:} $1\leq l_1<m$. For this case, the following estimate holds 
\begin{align*}
&\int_{1}^{\infty}\frac{\varphi^{-1}_{(l_1,l_2)}\circ\Phi^{-1}_{l_1-l_2}(t)A_{m-l_1}(\log^{2l_1+1}(e+t))}
{t^2\log^{l_1+2}(e+t)}dt\\
&=\int_{c_0}^{\infty}\frac{\varphi^{-1}_{(l_1,l_2)}(t)\log^{l_1-1}(e+\Phi_{l_1-l_2}(t))\log^{m-l_1}(e+\log^{2l_1+1}
(e+\Phi_{l_1-l_2}(t)))}{\Phi^{2}_{l_1-l_2}(t)}\Phi^{'}_{l_1-l_2}(t)dt\\
&\lesssim \int_{c_0}^{\infty}\frac{\varphi^{-1}_{(l_1,l_2)}(t)
\log^{l_2}(e+t)\log^{m-l_1}(e+\log^{2l_1+1}
(e+t))}{t^2\log(e+t)}dt,
\end{align*}
where $c_0=\Phi^{-1}_{l_1-l_2}(1)$ and $\Phi'$ denotes the derivative of $\Phi$.

Take $\varphi_{(l_1,l_2)}(t)=t\log^{m-l_1+l_2}(e+t)\log^{1+\varepsilon}(e+\log(e+t))$, then
\begin{align*}
\int_{1}^{\infty}&\frac{\varphi^{-1}_{(l_1,l_2)}\circ\Phi^{-1}_{l_1-l_2}(t)A_{m-l_1}(\log^{2l_1+1}(e+t))}
{t^2\log^{l_1+2}(e+t)}dt\\
&\lesssim \int_{c_0}^{\infty}\frac{\log^{l_2}(e+t)\log^{m-l_1}(e+\log^{2l_1+1}
(e+t))}{t\log^{m-l_1+l_2+1}(e+t)\log^{1+\varepsilon}(e+\log(e+t))}dt\\
&\lesssim \int_{c_0}^{\infty}\frac{1}{t\log(e+t)\log^{1+\varepsilon}(e+\log(e+t))}dt\\
&\lesssim \frac{1}{\varepsilon}.
\end{align*}
Combining $\Phi_{l_1-l_2}\circ\varphi_{(l_1,l_2)}(t)\lesssim \varphi_{(l_1,l_2)}(t) \log^{l_1-l_2}(e+t)=t\log^m(e+t)\log^{1+\varepsilon}(e+\log(e+t))$ with the fact 
\begin{align*}
\beta_ {m,l_1,l_2}&=\sum_{k=1}^{a_{n,l_1}}\frac{r^{l_{2}k}_{l_1}A_{m-l_1}(4^k)\varphi^{-1}_{(l_1,l_2)}\circ\Phi^{-1}_{l_1-l_2}
(1/\alpha_{r_{l_1},k})}{4^k\Phi^{-1}_{l_1-l_2}(1/\alpha_{r_{l_1},k})}\\ &\lesssim \sum_{k=1}^{a_{n,l_1}}\frac{C_{m,l_1,l_2,k}}{\log^{1+\varepsilon}
\Big(e+\log\big(\frac{1/\alpha_{r_{l_1},k}}{(e+1/\alpha_{r_{l_1},k})^m}\big)\Big)}\\
&\lesssim\frac{1}{\varepsilon},
\end{align*}
we deduce that for any $1\leq l_1<m$ and $\,0\leq l_2\leq l_1-1$,
\begin{equation}\label{ie5.22}
\begin{aligned}
K_{\varphi_{(l_1,l_2)}}&\int_{\mathbb{R}^n}A_{m-l_1}
(\frac{|f(x)|}{\lambda})M_{\Phi_{l_1-l_2}\circ\varphi_{(l_1,l_2)}}w(x)dx\\
&\lesssim \frac{1}{\varepsilon}\int_{\mathbb{R}^n}\Phi_{m-l_1}
(\frac{|f(x)|}{\lambda})M_{L(\log L)^m(\log \log L)^{1+\varepsilon}}w(x)dx\\
&\leq \frac{1}{\varepsilon}\int_{\mathbb{R}^n}\Phi_{m}
(\frac{|f(x)|}{\lambda})M_{L(\log L)^m(\log \log L)^{1+\varepsilon}}w(x)dx.
\end{aligned}
\end{equation}
\textbf{Case III:} $l_1=m$. In the last case, we observe that for any $0\leq l_2\leq m-1$,
\begin{align*}
\int_{1}^{\infty}&\frac{\varphi^{-1}_{(m,l_2)}\circ\Phi^{-1}_{m-l_2}(t)A_{0}(\log^{2m+1}(e+t))}{t^2\log^{m+2}(e+t)}dt\\	 &=\int_{1}^{\infty}\frac{\varphi^{-1}_{(m,l_2)}\circ\Phi^{-1}_{m-l_2}(t)\log^{m-1}(e+t)}{t^2}dt\\
&=\int_{\Phi^{-1}_{m-l_2}(1)}^{\infty}\frac{\varphi^{-1}_{(m,l_2)}(t)\log^{m-1}(e+\Phi_{m-l_2}(t))}{\Phi^{2}_{m-l_2}(t)}\Phi^{'}_{m-l_2}(t)dt\\
&\lesssim
\int_{\Phi^{-1}_{m-l_2}(1)}^{\infty}\frac{\varphi^{-1}_{(m,l_2)}(t)\log^{l_2}(e+t)}{t^2\log(e+t)}dt.
\end{align*}
Take $\varphi_{(m,l_2)}(t)=t\log^{l_2}(e+t)\log^{1+\varepsilon}(e+\log(e+t))$,
a simple calculation shows that
$$\int_{1}^{\infty}\frac{\varphi^{-1}_{(m,l_2)}\circ\Phi^{-1}_{m-l_2}(t)
A_{0}(\log^{2m+1}(e+t))}{t^2\log^{m+2}(e+t)}dt\lesssim \frac{1}{\varepsilon}.$$
Note that $\beta_{m,m,l_2}\lesssim \frac{1}{\varepsilon}$ and
$$\Phi_{m-l_2}\circ \varphi_{(m,l_2)}(t)\lesssim \varphi_{(m,l_2)}(t)\log^{m-l_2}(e+t)=t\log^{m}(e+t)
\log^{1+\varepsilon}(e+\log(e+t)),$$
then
\begin{equation}\label{ie5.23}
\begin{aligned}
K_{\varphi_{(m,l_2)}}&\int_{\mathbb{R}^n}A_{0}
(\frac{|f(x)|}{\lambda})M_{\Phi_{m-l_2}\circ\varphi_{(m,l_2)}}w(x)dx \lesssim \int_{\mathbb{R}^n}\frac{|f(x)|}{\varepsilon\lambda}M_{L(\log L)^m(\log \log L)^{1+\varepsilon}}w(x)dx.
\end{aligned}
\end{equation}
Combining all estimates in \eqref{ie5.21}-\eqref{ie5.23}, we have
$$w\big(\{x\in \mathbb{R}^n:|T_{\vec b}f(x)|>\lambda\}\big)
\leq\frac{ C_{n,m,T}}{\varepsilon}\int_{\mathbb{R}^n}\Phi_m\Big(\frac{|f(x)|}{\lambda}\Big)M_{L(\log L)^m(\log \log L)^{1+\varepsilon}}w(x)dx.$$
Hence taking all these cases into account and using the homogeneity, we deduce that \eqref{ie5.28} holds for every $\vec{b}\in \mathrm{BMO}^m$.
\end{proof}

\begin{proof}[Proof of Corollary \ref{cor5.1}]
On the one hand, $M_{L(\log L)^m(\log \log L)^{1+\varepsilon}} w(x)\leq c_{\varepsilon} M_{L(\log L)^{m+\varepsilon} }w(x)$ and \eqref{ie5.28} implies \eqref{ie5.27}. On the other hand, in the proof of Theorem \ref{thm5.3}, if we take $\varphi_{(l_1,l_2)}(t)=t\log^{m-l_1+l_2+\varepsilon}(e+t),\,0\leq l_2<l_1\leq m$ and repeat the above process yield \eqref{ie5.24}.
\end{proof}

\begin{proof}[Proof of Corollary \ref{cor5.2}]
It suffices to show that \eqref{ie5.25} holds. Note that $w\in A^{\rm{weak}}_{\infty}$ and it holds for $t\geq 1, \alpha >0$ that $\log t\leq \frac{t^{\alpha}}{\alpha}$, we have
\begin{equation}\label{ie5.29}
M_{L(\log L)^{m+\varepsilon}}w(x)\leq \bigg(1+2\max\{1,2^{s-2}\}\Big(\frac{m+\varepsilon}{s-1}\Big)^{m+\varepsilon}e^{s-1}\bigg)M_{s}w(x),
\end{equation}
where $s=1+(m+\varepsilon)\alpha=1+\frac{1}{\tau_n[w]_{A^{\rm{weak}}_{\infty}}}$.
Then Lemma \ref{lem2.2} implies that
$$M_sw(x)\leq 2Mw(x).$$
Now it remains to verify \eqref{ie5.29}. Indeed, it follows from the elementary calculation that
\begin{align*}
1+2\max\{1,2^{s-2}\}\Big(\frac{m+\varepsilon}{s-1}\Big)^{m+\varepsilon}e^{s-1}
&\leq \max\{2,2^{(m+\varepsilon)\alpha}\}\Big(\frac{1}{\alpha}\Big)^{m+\varepsilon}e^{(m+\varepsilon)\alpha}\\
&\lesssim \max\left\{2,2^{\frac{1}{\tau_n[w]_{A^{\rm{weak}}_{\infty}}}}\right\}
e^{\frac{1}{\tau_n[w]_{A^{\rm{weak}}_{\infty}}}}[w]^{m+\varepsilon}_{A^{\rm{weak}}_{\infty}}\\
&\lesssim \max\left\{e,e^{\frac{2}{\tau_n[w]_{A^{\rm{weak}}_{\infty}}}}\right\}[w]^{m+\varepsilon}_{A^{\rm{weak}}_{\infty}}.
\end{align*}
Take $\varepsilon=\frac{1}{\log(e+[w]_{A^{\rm{weak}}_{\infty}})}$, we obtain
$$\frac{1}{\varepsilon}M_{L(\log L)^{m+\varepsilon}}w(x)\lesssim \max\left\{e,e^{\frac{2}{\tau_n[w]_{A^{\rm{weak}}_{\infty}}}}\right\}
[w]^m_{A^{\rm{weak}}_{\infty}}\log(e+[w]_{A^{\rm{weak}}_{\infty}})Mw(x).$$

In particular, if $w\in A_1$, according to the definition of the $A_1$ weight, we have $$Mw(x)\leq[w]_{A_1}w(x).$$
This estimate combined with \eqref{ie5.25} yields \eqref{ie5.26}, which completes the proof of Corollary \ref{cor5.2}.
\end{proof}

\section{Proofs of Theorem \ref{thm5.4}, Corollaries \ref{cor5.6} and \ref{cor5.4}}\label{sec6}
This section is devoted to establish the Coifman-Fefferman inequality for $T_{\vec{b}}$. For this purpose, we need a generalized H\"{o}lder’s inequality of multilinear version\cite[Lemma 4.1]{hor+tan}, which is a generalization of \cite[Lemma 1]{per9} under the general measure.

\begin{lemma}[\cite{hor+tan}]\label{lem3.1}
Let $\Phi_0,\Phi_1,\Phi_2,\ldots,\Phi_m$ be Young functions. If
$$\Phi^{-1}_{1}(t)\Phi^{-1}_{2}(t)\cdots\Phi^{-1}_{m}(t)\leq D \Phi^{-1}_0(t),$$
then for all functions $f_1,\ldots,f_m$ and all cubes $Q$, we have
$$\|f_1f_2\ldots f_m\|_{\Phi_{0}(\mu),Q}\leq mD\|f_1\|_{\Phi_{1}(\mu),Q}\|f_2\|_{\Phi_{2}(\mu),Q}\cdots\|f_{m}\|_{\Phi_{m}(\mu),Q}.$$
In particular, for any weight $w$ and $s_1,\ldots,s_m\geq 1$. Let $\frac{1}{s}=\sum_{i=1}^m\frac{1}{s_i}$, then 
$$\frac{1}{w(Q)}\int_{Q}|f_1(x)\cdots f_m(x)g(x)|w(x)dx\leq 2^{\frac{1}{s}}\Big(1+\frac{1}{s}\Big)^{\frac{1}{s}}
\prod_{i=1}^m\|f_i\|_{\exp L^{s_i}(w),Q}\|g\|_{L(\log L)^{1/s}(w),Q}.$$
\end{lemma}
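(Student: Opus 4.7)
The lemma splits into two parts: the general multilinear H\"older-type inequality for Luxemburg norms with constant $mD$, and the specialized version involving $\exp L^{s_i}$ and $L(\log L)^{1/s}$ norms with the sharp constant $2^{1/s}(1+1/s)^{1/s}$. I would address them in order.

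The proof of the general inequality rests on the following pointwise Young-type estimate: for all $u_1,\ldots,u_m\geq 0$,
$$\Phi_0\Big(\frac{u_1 u_2\cdots u_m}{D}\Big)\leq \sum_{i=1}^m \Phi_i(u_i), \qquad (\star)$$
which I would establish in one line: set $t=\max_i\Phi_i(u_i)$, so that $u_i\leq\Phi_i^{-1}(t)$ for each $i$, and apply the hypothesis $\prod_i\Phi_i^{-1}(t)\leq D\Phi_0^{-1}(t)$ to get $u_1\cdots u_m\leq D\Phi_0^{-1}(t)$; applying the monotone $\Phi_0$ and bounding $t\leq\sum_i\Phi_i(u_i)$ yields $(\star)$.

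Next I would normalize. Put $\lambda_i:=\|f_i\|_{\Phi_i(\mu),Q}$ (each nonzero, else the inequality is trivial), so that $\frac{1}{\mu(Q)}\int_Q\Phi_i(|f_i|/\lambda_i)d\mu\leq 1$ by the definition of the Luxemburg norm. Substituting $u_i=|f_i(x)|/\lambda_i$ in $(\star)$ and integrating against $d\mu/\mu(Q)$ over $Q$ gives $\frac{1}{\mu(Q)}\int_Q\Phi_0(|f_1\cdots f_m|/(D\lambda_1\cdots\lambda_m))d\mu\leq m$. To convert this into a unit-threshold bound, I would invoke the convexity inequality $\Phi_0(x/m)\leq\Phi_0(x)/m$ (which follows from $\Phi_0$ convex with $\Phi_0(0)=0$, so that $\Phi_0(x)/x$ is nondecreasing); this absorbs the factor $m$ inside the argument and gives $\frac{1}{\mu(Q)}\int_Q\Phi_0(|f_1\cdots f_m|/(mD\lambda_1\cdots\lambda_m))d\mu\leq 1$. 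The Luxemburg definition then yields $\|f_1\cdots f_m\|_{\Phi_0(\mu),Q}\leq mD\lambda_1\cdots\lambda_m$, which is the claimed first inequality.

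For the specialized inequality I would choose $\Phi_i(t)=e^{t^{s_i}}-1$, so that $\Phi_i^{-1}(t)=\log^{1/s_i}(1+t)$ and $\|\cdot\|_{\Phi_i(w),Q}=\|\cdot\|_{\exp L^{s_i}(w),Q}$, and $B(t)\simeq t\log^{1/s}(e+t)$ as the Young function generating the $L(\log L)^{1/s}$ norm. Direct computation gives $\prod_i\Phi_i^{-1}(t)=\log^{1/s}(1+t)$ and the product identity $\big(\prod_i\Phi_i^{-1}(t)\big)\cdot B^{-1}(t)\simeq t$ for $t\geq 1$, so the hypothesis of the first part applies to the $(m+1)$-tuple $(f_1,\ldots,f_m,g)$ with $\Phi_0(t)=t$ (whose Luxemburg norm is the $L^1(w)$-average on $Q$). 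The naive two-step route (bound $\|f_1\cdots f_m\|_{\exp L^s(w),Q}$ via the previous paragraph, then apply standard Orlicz-H\"older for the complementary pair $(\exp L^s, L(\log L)^{1/s})$) would introduce an unwanted factor of $m$ from the convexity step. Instead I would apply Young's inequality $ab\leq \lambda\Phi_0(a/\lambda)+\lambda\bar{\Phi}_0(b/\lambda)$ directly once with $\Phi_0(t)=e^{t^s}-1$ to $|f_1\cdots f_m|\cdot|g|$, use $(\star)$ to expand $\Phi_0(|f_1\cdots f_m|/\lambda)$ as $\sum_i\Phi_i(|f_i|/\lambda_i)$ under the scaling choice $\lambda=\prod_i\lambda_i$, and then optimize the free parameter. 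The parameter optimization yields the factor $(1+1/s)^{1/s}$, the balanced pairing of the two Luxemburg thresholds contributes $2^{1/s}$, and the identity $\sum_i 1/s_i=1/s$ is precisely what prevents an $m$-factor from appearing in the exponent.

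The routine steps are elementary; the main obstacle is tracking the optimization in the third paragraph so as to extract the explicit, $m$-free constant $2^{1/s}(1+1/s)^{1/s}$. This requires the single-pass Young step rather than the natural two-step decomposition through $\|f_1\cdots f_m\|_{\exp L^s(w),Q}$, which irretrievably forfeits a factor of $m$ (or $m^{1/s}$). Once the direct optimization is carried out carefully, the claimed constant emerges cleanly from the balance parameters.
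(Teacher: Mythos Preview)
The paper does not supply its own proof of this lemma; it is quoted from \cite{hor+tan} (which in turn extends \cite[Lemma~1]{per9}). So there is no in-paper argument to compare against, and I evaluate your proposal on its own merits.

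Your treatment of the first assertion is correct and is exactly the standard O'Neil-type argument: the pointwise bound $(\star)$ followed by normalization and the convexity step $\Phi_0(x/m)\le \Phi_0(x)/m$ recovers the constant $mD$.

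For the second assertion your diagnosis of the difficulty is off, and the proposed cure is not carried out. You assert that routing through $\|f_1\cdots f_m\|_{\exp L^s(w),Q}$ ``irretrievably forfeits a factor of $m$''. It does not. For the specific Young functions $\Phi_i(t)=e^{t^{s_i}}-1$ one has, with $d\nu=w\,dx/w(Q)$ and $\lambda_i=\|f_i\|_{\exp L^{s_i}(w),Q}$, the weighted AM--GM bound
\[
\Big(\frac{|f_1\cdots f_m|}{\lambda_1\cdots\lambda_m}\Big)^{s}\le \sum_{i=1}^m \frac{s}{s_i}\Big(\frac{|f_i|}{\lambda_i}\Big)^{s_i},
\]
whence $e^{(|f_1\cdots f_m|/\prod\lambda_i)^s}\le \prod_i \big(e^{(|f_i|/\lambda_i)^{s_i}}\big)^{s/s_i}$, and then classical H\"older with exponents $s_i/s$ gives
\[
\int_Q e^{(|f_1\cdots f_m|/\prod\lambda_i)^s}\,d\nu \le \prod_i\Big(\int_Q e^{(|f_i|/\lambda_i)^{s_i}}\,d\nu\Big)^{s/s_i}\le \prod_i 2^{s/s_i}=2.
\]
Thus $\|f_1\cdots f_m\|_{\exp L^s(w),Q}\le \prod_i\|f_i\|_{\exp L^{s_i}(w),Q}$ with constant $1$, no factor of $m$ anywhere. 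The stated constant $2^{1/s}(1+1/s)^{1/s}$ then comes entirely from the single Orlicz--H\"older pairing of $\exp L^s$ with $L(\log L)^{1/s}$; this is how the cited references actually argue. Your ``single-pass Young step'' is not needed, and as written it contains an error: the inequality $ab\le \lambda\Phi_0(a/\lambda)+\lambda\bar\Phi_0(b/\lambda)$ is false in general (Young applied to $a/\lambda$ and $b/\lambda$ yields $ab\le \lambda^2\Phi_0(a/\lambda)+\lambda^2\bar\Phi_0(b/\lambda)$). The optimization you allude to is also never performed, so the claimed emergence of $2^{1/s}(1+1/s)^{1/s}$ remains an assertion rather than a proof.
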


By Lemma \ref{lem2.3}, we establish the following weighted John-Nirenberg inequality for BMO functions with $0<p<\infty$, which generalizes the classical case $p>1$.
\begin{lemma}\label{lem3.2}
Let $b\in \rm{BMO}$ and $w\in A_{\infty}$. Then for any $0<p<\infty$, there exists a constant $c$ independent of $b,w$ such that for every cube $Q$, 
$$\Big(\frac{1}{w(Q)}\int_{Q}|b(x)-\langle b\rangle_Q|^pw(x)dx\Big)^{\frac{1}{p}}\leq c[w]_{A_{\infty}}\|b\|_{\rm BMO}.$$
\end{lemma}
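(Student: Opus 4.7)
\smallskip
\noindent\textbf{Proof proposal for Lemma \ref{lem3.2}.} The plan is to combine the classical John--Nirenberg inequality with the quantitative $A_\infty$ self-improvement provided by Lemma \ref{lem2.3}, and then extract the $L^p(w)$ norm via the layer-cake formula. The exponential decay in John--Nirenberg will be preserved (with a decay rate that is scaled by $1/([w]_{A_\infty})$) once we transfer from Lebesgue to $w$-measure, and this scaling is precisely what will produce the linear dependence $[w]_{A_\infty}$ on the right-hand side.

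To carry this out, first fix a cube $Q$ and let
$$E_\lambda := \{x\in Q:\ |b(x)-\langle b\rangle_Q|>\lambda\},\qquad \lambda>0.$$
The classical John--Nirenberg inequality yields dimensional constants $C_1,C_2>0$ such that
$$|E_\lambda| \leq C_1\, e^{-C_2 \lambda/\|b\|_{\rm BMO}}\,|Q|.$$
Applying Lemma \ref{lem2.3} to $E_\lambda\subset Q$ gives
$$\frac{w(E_\lambda)}{w(Q)}\leq 2\Big(\frac{|E_\lambda|}{|Q|}\Big)^{1/(c_n [w]_{A_\infty})}\leq 2\,C_1^{1/(c_n[w]_{A_\infty})}\exp\!\Big(-\frac{C_2\lambda}{c_n[w]_{A_\infty}\|b\|_{\rm BMO}}\Big).$$
Since $[w]_{A_\infty}\geq 1$, the prefactor $C_1^{1/(c_n[w]_{A_\infty})}$ is bounded by a constant $C=C(n)$ depending only on $n$.

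The layer-cake representation then gives
$$\frac{1}{w(Q)}\int_Q |b(x)-\langle b\rangle_Q|^p w(x)\,dx = p\int_0^\infty \lambda^{p-1}\,\frac{w(E_\lambda)}{w(Q)}\,d\lambda \leq 2pC\int_0^\infty \lambda^{p-1}e^{-\mu\lambda}\,d\lambda,$$
where $\mu := C_2/(c_n[w]_{A_\infty}\|b\|_{\rm BMO})$. Evaluating the gamma integral yields the bound $2pC\,\Gamma(p)\,\mu^{-p}$, and taking the $p$-th root produces
$$\Big(\frac{1}{w(Q)}\int_Q |b-\langle b\rangle_Q|^p w\,dx\Big)^{1/p}\leq (2pC\Gamma(p))^{1/p}\cdot \frac{c_n}{C_2}\,[w]_{A_\infty}\|b\|_{\rm BMO},$$
which is of the desired form with a constant $c$ depending only on $n$ and $p$ (but independent of $b$ and $w$).

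The only real subtlety is book-keeping the polynomial-in-$p$ constant so that the dependence on $[w]_{A_\infty}$ stays \emph{linear}: the key is that the exponent $\alpha=1/(c_n[w]_{A_\infty})$ produced by Lemma \ref{lem2.3} enters the exponential rate as a multiplicative factor, which after integration contributes exactly $\alpha^{-1}\approx[w]_{A_\infty}$ to the $L^p$ norm, regardless of $p$. Everything else is routine; no smallness assumption on $p$ is needed, which is why this extends the usual $p\geq 1$ statement to the full range $0<p<\infty$.
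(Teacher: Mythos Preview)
Your proof is correct and follows essentially the same approach as the paper: both combine the layer-cake formula with Lemma~\ref{lem2.3} and the John--Nirenberg inequality, then evaluate the resulting gamma-type integral. The only cosmetic difference is that the paper first normalizes by $[w]_{A_\infty}\|b\|_{\rm BMO}$ and shows the resulting integral is bounded by an absolute constant, whereas you carry the parameters through and extract the factor $[w]_{A_\infty}\|b\|_{\rm BMO}$ at the end.
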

\begin{proof}
It suffices to prove that
$$\frac{1}{w(Q)}\int_{Q}\Big(\frac{|b(x)-\langle b\rangle_Q|}{[w]_{A_{\infty}}\|b\|_{\rm BMO}}\Big)^pw(x)dx\leq c,$$
where $c$ is independent of $b,w$ and $Q$.
By the layer cake formula and Lemma \ref{lem2.3}, we obtain
\begin{align*}
\frac{1}{w(Q)}&\int_{Q}\Big(\frac{|b(x)-\langle b\rangle_Q|}{[w]_{A_{\infty}}\|b\|_{\rm BMO}}\Big)^pw(x)dx\\
&=\frac{p}{w(Q)}\int_{0}^{\infty}\alpha^{p-1}w(\{x\in Q:|b(x)-\langle b\rangle_Q|>\alpha[w]_{A_{\infty}}\|b\|_{\rm BMO}\})d\alpha\\
&\leq 2p\int_{0}^{\infty}\alpha^{p-1}\Big(\frac{|\{x\in Q:|b(x)-\langle b\rangle_{Q}|>\alpha[w]_{A_{\infty}}\|b\|_{\rm BMO}\}|}{|Q|}\Big)^{\frac{1}{c_n[w]_{A_{\infty}}}}d\alpha.
\end{align*}
Then the John-Nirenberg inequality

\begin{equation*}
	\big|\big\{x\in Q:|b(x)-\langle b\rangle_{Q}|>\alpha\big\}\big|\leq e|Q|e^{-\frac{\alpha}{2^ne\|b\|_{\rm BMO}}},
\end{equation*}
gives
\begin{align*}
\frac{1}{w(Q)}&\int_{Q}\Big(\frac{|b(x)-\langle b\rangle_Q|}{[w]_{A_{\infty}}\|b\|_{\rm BMO}}\Big)^pw(x)dx\\
&\leq 2p\int_{0}^{\infty}\alpha^{p-1}\Big(e^{1-\frac{\alpha[w]_{A_{\infty}}\|b\|_{\rm BMO}}{2^ne\|b\|_{\rm BMO}}}\Big)^{\frac{1}{c_n[w]_{A_{\infty}}}}d\alpha\\
&\leq 2pe^{\frac{1}{c_n}}\int_{0}^{\infty}\alpha^{p-1}e^{-\frac{\alpha}{2^nc_ne}}d\alpha\leq c.
\end{align*}

\end{proof}
We are now ready to prove Theorem \ref{thm5.4}.
\begin{proof}[Proof of Theorem $\ref{thm5.4}$]
According to Theorem \ref{thm5.1}, we have
\begin{equation*}
|T_{\vec{b}}f(x)|\leq C_{n,m}C_{T}\sum_{j=1}^{3^n}\sum_{\vec{\gamma}\in\{0,1\}^m}\mathcal{A}^{\vec\gamma}_{A,\mathcal{S}_j}(\vec{b},f)(x),\quad \text{a.e.} \ x \in \mathbb{R}^n.
\end{equation*}
By symmetry, we only need to consider the case $\vec{\gamma}\neq \vec{0}$. It follows from the triangle inequality that in order to show the inequality \eqref{ie1.1}, it suffices to show that for every $1\leq j \leq 3^n$,
\begin{equation}\label{ie3.2}
\|\mathcal{A}^{\vec\gamma}_{A,\mathcal{S}_j}(\vec{b},f)\|_{L^p(w)}\lesssim
\prod_{s=1}^m\|b_s\|_{\rm BMO}[w]^{m}_{A_{\infty}}[w]^{\max\{1,1/p\}}_{A_{\infty}}\|M_Bf\|_{L^p(w)}.
\end{equation}
Consider first the case $0<p\leq 1$. By the definition of $\mathcal{A}^{\vec\gamma}_{A,\mathcal{S}_j}(\vec{b},f)$ and $0<p\leq 1$, we have
$$\|\mathcal{A}^{\vec\gamma}_{A,\mathcal{S}_j}(\vec{b},f)\|^p_{L^p(w)}
\leq \sum_{Q\in \mathcal{S}_j}\int_{Q}\prod_{s:\gamma_s=1}|b_s(x)-\langle b_s \rangle_{Q}|^p\Big\|\prod_{s:\gamma_s=0}(b_s-\langle b_s \rangle_Q)f\Big\|^p_{A,Q}w(x)dx.$$
Let 
\begin{equation}\label{ie3.9}
|S|=\#\{s:\gamma_s=1\},
\end{equation}
where $\#$ denotes the cardinal number of sets. Fix exponents $\frac{1}{p}=\frac{1}{p_1}+\cdots+\frac{1}{p_{|S|}}$
with $0<p_1,\ldots,p_{|S|}<\infty$, then the H\"{o}lder's inequality and Lemma \ref{lem3.2} give that
\begin{equation}\label{ie3.3}
\begin{aligned}
\frac{1}{w(Q)}\int_{Q}\prod_{s:\gamma_s=1}|b_s(x)-\langle b_s \rangle_{Q}|^pw(x)dx
&\leq \Big(\frac{1}{w(Q)}\int_{Q}|b_s(x)-\langle b_s \rangle_{Q}|^{p_s}w(x)dx\Big)^{\frac{p}{p_s}}\\
&\lesssim \prod_{s:\gamma_s=1}([w]_{A_{\infty}}\|b_s\|_{\rm BMO})^p.
\end{aligned}
\end{equation}
On the other hand, using the condition $B^{-1}(t)(C^{-1}(t))^{m}\leq cA^{-1}(t)$ and Lemma \ref{lem3.1}, we obtain
$$\Big\|\prod_{s:\gamma_s=0}(b_s-\langle b_s \rangle_Q)f\Big\|_{A,Q}\lesssim \prod_{s:\gamma_s=0}\|b\|_{\rm{BMO}}\|f\|_{B,Q}.$$
This inequality, together with \eqref{ie3.3}, implies that
\begin{equation}\label{ie3.4}
\begin{aligned}
\|\mathcal{A}^{\vec\gamma}_{A,\mathcal{S}_j}
(\vec{b},f)\|^p_{L^p(w)}
&\lesssim \prod_{s=1}^m\|b_s\|^p_{\rm {BMO}}[w]^{p|S|}_{A_{\infty}}\sum_{Q\in \mathcal{S}_j}\|f\|^p_{B,Q}w(Q)\\
&\leq 
\prod_{s=1}^m\|b_s\|^p_{\rm {BMO}}[w]^{mp}_{A_{\infty}}\sum_{Q\in \mathcal{S}_j}\Big(\frac{1}{w(Q)}\int_{Q}\|f\|^{\frac{p}{2}}_{B,Q}w(x)dx
\Big)^2\\
&\leq \prod_{s=1}^m\|b_s\|^p_{\rm {BMO}}[w]^{mp}_{A_{\infty}}\sum_{Q\in \mathcal{S}_j}\Big(\frac{1}{w(Q)}\int_{Q}(M_Bf(x))^{\frac{p}{2}}w(x)dx
\Big)^2.
\end{aligned}
\end{equation}
Note that $\mathcal{S}_j$ is a $\frac{1}{2\cdot 9^n}$-sparse family, i.e, for any $R\in \mathcal{S}_j$, there exists $E_R\subset R$ such $|E_R|\geq \frac{1}{2\cdot 9^n}|R|$, then for every dyadic cube $Q\in \mathcal{S}_j$, it follows that
\begin{align*}
  \sum_{R\subseteq Q}w(R)= \sum_{R\subseteq Q}\frac{w(R)}{|R|}|R|&\leq 2\cdot 9^n\sum_{R\subseteq Q}\frac{w(R)}{|R|}|E_R|\\
   & \leq 2\cdot 9^n \sum_{R\subseteq Q} \int_{E_R}M(w\chi_{R})(x)dx\\
   & \leq2\cdot 9^n\int_{Q}M(w\chi_{Q})(x)dx\\
   & \leq2\cdot 9^n[w]_{A_{\infty}}w(Q).
\end{align*}
Combining the Carleson embedding theorem with \eqref{ie3.4}, it yields that
\begin{equation}\label{ie3.5}
\begin{aligned}
\|\mathcal{A}^{\vec\gamma}_{A,\mathcal{S}_j}
(\vec{b},f)\|^p_{L^p(w)}
&\lesssim \prod_{s=1}^m\|b_s\|^p_{\rm {BMO}}[w]^{mp}_{A_{\infty}}\sum_{Q\in \mathcal{S}_j}\Big(\frac{1}{w(Q)}\int_{Q}(M_Bf(x))^{\frac{p}{2}}w(x)dx
\Big)^2\\
& \lesssim \prod_{s=1}^m\|b_s\|^p_{\rm {BMO}}[w]^{mp+1}_{A_{\infty}} \|M_Bf\|^p_{L^p(w)}.
\end{aligned}
\end{equation}
We now turn our attention to the case $p>1$. By duality, it follows that
\begin{equation}\label{ie3.6}
\|\mathcal{A}^{\vec\gamma}_{A,\mathcal{S}_j}
(\vec{b},f)\|_{L^p(w)}=\sup_{\|g\|_{L^{p'}(w)}\leq 1}\bigg|\int_{\Rn}\mathcal{A}^{\vec\gamma}_{A,\mathcal{S}_j}
(\vec{b},f)(x)g(x)w(x)dx\bigg|.
\end{equation}
For any fixed function $g\in L^{p'}(w)$ with $\|g\|_{L^{p'}(w)}\leq 1$. Applying Lemma \ref{lem3.1} and the same argument as in the case of $0<p\leq 1$, we obtain
\begin{align*}
\bigg|&\int_{\Rn}\mathcal{A}^{\vec\gamma}_{A,\mathcal{S}_j}
(\vec{b},f)(x)g(x)w(x)dx\bigg|\\
&\leq \sum_{Q\in \mathcal {S}_j}\int_{Q}\Big\|\prod_{s:\gamma_s=0}(b_s-\langle b_s\rangle_Q)f\Big\|_{A,Q}\frac{w(Q)}{w(Q)}\int_{Q}\prod_{s:\gamma_s=1}
|b_s(x)-\langle b_s\rangle_Q||g(x)|w(x)dx\\
&\lesssim \sum_{Q\in \mathcal {S}_j}\Big\|\prod_{s:\gamma_s=0}(b_s-\langle b_s\rangle_Q)f\Big\|_{A,Q}\prod_{s:\gamma_s=1}\|b_s-\langle b_s\rangle_Q\|_{\exp L(w),Q}\|g\|_{L(\log L)^{1/|S|}(w),Q}w(Q)\\
&\lesssim \prod_{s=1}^m\|b_s\|_{\rm BMO}[w]^m_{A_{\infty}}\sum_{Q\in \mathcal {S}_j}\|f\|_{B,Q}\|g\|_{L(\log L)^{1/|S|}(w),Q}w(Q),
\end{align*}
where the last inequality is due to the fact that $\|b-\langle b\rangle_Q\|_{\exp L(w),Q}\lesssim [w]_{A_{\infty}}\|b\|_{\rm BMO}$ (\cite[Lemma 4.7]{iba}) and $|S|$ is defined in \eqref{ie3.9}.
Let $ M^{\mathcal{D}}_{r(w)}$ be defined by
$$M^{\mathcal{D}}_{r(w)}:=\sup_{Q\ni x,Q \in \mathcal{D}}\Big(\frac{1}{w(Q)}\int_{Q}|f(x)|^rw(x)dx\Big)^{\frac{1}{r}}.$$ 
Then the Carleson embedding theorem and the H\"{o}lder’s inequality give that
\begin{equation*}
\begin{aligned}
&\bigg|\int_{\Rn}\mathcal{A}^{\vec\gamma}_{A,\mathcal{S}_j}
(\vec{b},f)(x)g(x)w(x)dx\bigg|\\
&\lesssim \prod_{s=1}^m\|b_s\|_{\rm BMO}[w]^m_{A_{\infty}}\sum_{Q\in \mathcal {S}_j}\|f\|_{B,Q}\|g\|_{L(\log L)^{1/|S|}(w),Q}w(Q)\\
& \lesssim \prod_{s=1}^m\|b_s\|_{\rm BMO}[w]^m_{A_{\infty}}\sum_{Q\in \mathcal {S}_j}\Big(\frac{1}{w(Q)}\int_{Q}\big(M_{B}f(x)
M^{\mathcal D}_{L(\log L)^{1/|S|}(w)}g(x)\big)^{\frac{1}{2}}w(x)dx\Big)^2\\
&\lesssim \prod_{s=1}^m\|b_s\|_{\rm BMO}[w]^{m+1}_{A_{\infty}}\|M_Bf\|_{L^p(w)}\|M^{\mathcal D}_{L(\log L)^{1/|S|}(w)}g\|_{L^{p'}(w)}\\
&\lesssim \prod_{s=1}^m\|b_s\|_{\rm BMO}[w]^{m+1}_{A_{\infty}}\|M_Bf\|_{L^p(w)}\|M^{\mathcal{D}}_{r(w)}g\|_{L^{p'}(w)},
\end{aligned}
\end{equation*}
where the last inequality follows from the fact that $$M^{\mathcal D}_{L(\log L)^{1/|S|}(w)}g(x)\lesssim M^{\mathcal{D}}_{r(w)}g(x), \,1<r<p'.$$ 

Note that for any weight $w$, $\|M^{\mathcal D}_{w}g\|_{L^p(w)}\leq C\|g\|_{L^p(w)}$ (\cite[Theorem 15.1]{hor+ler18}), where $C$ is independent of $w$, we have
\begin{equation}\label{ie3.8}
\begin{aligned}
\|\mathcal{A}^{\vec\gamma}_{A,\mathcal{S}_j}
(\vec{b},f)\|_{L^p(w)}&\lesssim \prod_{s=1}^m\|b_s\|_{\rm BMO}[w]^{m+1}_{A_{\infty}}\|M_Bf\|_{L^p(w)}
\sup_{\|g\|_{L^{p'}(w)}\leq 1}\|M^{\mathcal{D}}_{w}(|g|^r)\|^{\frac{1}{r}}_{L^{p'/r}(w)}\\
&\lesssim \prod_{s=1}^m\|b_s\|_{\rm BMO}[w]^{m+1}_{A_{\infty}}\|M_Bf\|_{L^p(w)}.
\end{aligned}
\end{equation}

Combining \eqref{ie3.5} with \eqref{ie3.8} yields the desired estimate, which finishes the proof of Theorem \ref{thm5.4}.
\end{proof}
Consider now the proofs of Corollaries \ref{cor5.6} and \ref{cor5.4}.

\begin{proof}[Proof of Corollary $\ref{cor5.6}$]
Let $A(t)\approx t^{r'}$ and $B(t)= t^{r'}\log^{r'm}(e+t)$, then $T$ is an $\bar A$-H\"{o}rmander operator and a simple calculation gives that
$$B^{-1}(t)\approx\frac{t^{\frac{1}{r'}}}{\log^m(e+t)}.$$
It is easy to check that $B^{-1}(t)(C^{-1}(t))^{m}\leq cA^{-1}(t)$. Note that for any cube $Q$ and $0<\varepsilon<1$, 
\begin{equation}\label{ie11}
\|f\|_{L^{r'}(\log L)^{r'm},Q}\leq C\Big(\frac{1}{|Q|}\int_Q|f(y)|^{r'+\varepsilon}dy\Big)^{\frac{1}{r'+\varepsilon}}.
\end{equation}
Then by Theorem \ref{thm5.4}, we obtain
$$\int_{\Rn}|T_{\vec b}(f)(x)|^pw(x)dx\lesssim \prod_{s=1}^m\|b_s\|^p_{\rm BMO}[w]^{mp}_{A_{\infty}}[w]^{\max\{1,p\}}_{A_{\infty}}\int_{\Rn}(M_{r'+\varepsilon}f(x))^pw(x)dx,$$
which completes the proof of Corollary \ref{cor5.6}.
\end{proof}

\begin{proof}[Proof of Corollary $\ref{cor5.4}$]
As mentioned above, if $T$ is an $\omega$-Calder\'{o}n-Zygmund operator and $\omega$ satisifies a Dini condition, then $T$ is an $\bar{A}$-H\"{o}rmander operator with $A(t)=t$.   
Let $B(t)=t\log^{m}(e+t)$, it follows that $B^{-1}(t)(C^{-1}(t))^{m}\leq cA^{-1}(t)$. Therefore, Theorem \ref{thm5.4}, together with the fact that $M_{L(\log L)^{m} }f\approx M^{m+1}f$, yields the required estimate \eqref{ie1.2}.
\end{proof}

\section{ Proofs of Theorem \ref{thm5.5}}\label{sec7}
This section will be devoted to demonstrate Theorem \ref{thm5.5}. For this purpose, we first introduce a new class of kernels depending on Young function $A$ and exponent $m$. 

\begin{definition}\label{def7.1}
Let $A$ be a Young function and $m\in \mathbb N$. The kernel $K$ is said to satisfy the $L^{A,m}$-H\"{o}rmander condition, if
\begin{align*}
   & \sup_{Q}\sup_{x,z\in \frac{1}{2} Q}\sum_{k=1}^{\infty}|2^kQ|m^k\left\|(K(x,\cdot)-K(z,\cdot))\chi_{2^k Q \backslash 2^{k-1} Q}\right\|_{A,2^k Q}<\infty,  \\
   & \sup _Q \sup _{x, z \in \frac{1}{2} Q} \sum_{k=1}^{\infty}|2^kQ|m^k\left\|(K(\cdot, x)-K(\cdot, z)) \chi_{2^k Q \backslash 2^{k-1} Q}\right\|_{A, 2^k Q}<\infty,
\end{align*}
we denote by $\mathcal{H}_{A,m}$ the class of kernels satisfying the $L^{A,m}$-H\"{o}rmander condition.
\end{definition}

We need to introduce some notations. Fix two integers $j$ and $m$ such that $0\leq j <m$. Let $C^m_{j}$ denote the family of all finite subsets $\sigma=\{\sigma(1),\ldots,\sigma(j)\}$ of $\{1,\ldots,m\}$ with $j$ different elements, when $j=0$, $\sigma$ is abbreviated as $\sigma=\varnothing$. For any $\sigma\in C^m_{j}$, the complementary sequence of $\sigma$ is given by $\sigma'=\{1,\ldots,m\}\backslash\sigma$.

For a finite family of integrable functions $\vec {b}=(b_1,\ldots,b_m)$ and $\sigma\in C^m_{j}$, we denote $\vec{b}_{\sigma}=(b_{\sigma(1)},\ldots,b_{\sigma(j)})$, $\|\vec{b}_{\sigma}\|_{\rm BMO}=\prod_{i\in \sigma}\|b_i\|_{\rm BMO}$ and $\|\vec{b}_{\sigma'}\|_{\rm BMO}=\prod_{i\in \sigma'}\|b_i\|_{\rm BMO}$.
With this notation, given an A-H\"{o}rmander operator $T$, the general commutator is defined by 
$$T_{\vec{b}_{\sigma}}f(x)=\int_{\Rn}\prod_{i\in \sigma}(b_i(x)-b_i(y))K(x,y)f(y)dy, \,\, x\notin \supp f.$$
In the particular case of $\sigma=\{1,\ldots,m\}$, we write $T_{\vec{b}_{\sigma}}=T_{\vec{b}}$.

The following Kolmogorov's inequality, which can be found in \cite[P. 485]{gar} will be used in our analysis.
\begin{lemma}[\cite{gar}]\label{lem7.1}
Let $0 < p < q < \infty$, there exists a constant $C$ depending on $p, q$ such that for any measurable function $f$,
$$\|f\|_{L^p(Q, \frac{dx}{|Q|})}\leq C\|f\|_{L^{q, \infty}(Q, \frac{dx}{|Q|})}.$$
\end{lemma}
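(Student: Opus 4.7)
\textbf{Proof plan for Lemma \ref{lem7.1}.} The plan is to use the layer cake formula together with the two elementary bounds on the distribution function that are available when the underlying measure is a probability measure. Let $d\nu = \frac{dx}{|Q|}$, so that $\nu(Q)=1$, and set
$$A := \|f\|_{L^{q,\infty}(Q,d\nu)} = \sup_{\lambda>0}\lambda\,\nu\big(\{x\in Q : |f(x)|>\lambda\}\big)^{1/q}.$$
For the distribution function $\mu_f(\lambda):=\nu(\{x\in Q: |f(x)|>\lambda\})$ we have simultaneously the trivial bound $\mu_f(\lambda)\leq 1$ (from $\nu(Q)=1$) and the weak-type bound $\mu_f(\lambda)\leq A^{q}\lambda^{-q}$.

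The key step is to exploit both bounds by splitting the layer cake integral at the level $\lambda = A$. Writing
$$\int_Q |f|^p\,d\nu = p\int_0^\infty \lambda^{p-1}\mu_f(\lambda)\,d\lambda,$$
I use the bound $\mu_f(\lambda)\leq 1$ on $(0,A)$ and $\mu_f(\lambda)\leq A^q\lambda^{-q}$ on $[A,\infty)$ to obtain
$$\int_Q |f|^p\,d\nu \leq p\int_0^{A}\lambda^{p-1}\,d\lambda + pA^q\int_A^\infty \lambda^{p-1-q}\,d\lambda = A^p + \frac{p}{q-p}A^p = \frac{q}{q-p}A^p,$$
where convergence of the second integral is precisely where the hypothesis $p<q$ enters. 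Taking $p$-th roots yields $\|f\|_{L^p(Q,d\nu)}\leq \bigl(\tfrac{q}{q-p}\bigr)^{1/p}\|f\|_{L^{q,\infty}(Q,d\nu)}$, giving the desired constant $C=C(p,q)$.

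There is no real obstacle here beyond choosing the right splitting point; the entire argument is a one-line application of the layer cake identity plus the two available bounds on $\mu_f$. I would also mention, as a closing remark, that the constant $\bigl(\tfrac{q}{q-p}\bigr)^{1/p}$ blows up as $p\uparrow q$, which is consistent with the strict inequality $p<q$ being necessary, and that the same proof works verbatim on any probability space, which is how the lemma will be applied in Section \ref{sec7} with $d\nu=\frac{dx}{|Q|}$.
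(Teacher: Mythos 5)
Your proof is correct: the paper states this lemma as a quoted result from Garc\'{\i}a-Cuerva and Rubio de Francia without reproducing a proof, and your layer-cake argument with the split at $\lambda=A$ is exactly the standard derivation found there, with the correct constant $\bigl(\tfrac{q}{q-p}\bigr)^{1/p}$. Nothing further is needed.
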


The following lemma presents a pointwise estimate of $M^{\#}_{\delta}(T_{\vec b}f)$, which plays a crucial role in the quantitative weighted Coifman-Fefferman inequalities.

\begin{lemma}\label{lem6.1}
Let $m\in\mathbb{N}$ and $\vec{b}=(b_1,\ldots,b_m)\in \mathrm{BMO}^m$. Assume that $A,B$ are Young functions such that $B^{-1}(t)(C^{-1}(t))^m\leq cA^{-1}(t)$ for $t\geq 1$, where $C(t)=e^{t}-1$. If $T$ is an $\bar{A}$-H\"{o}rmander operator with kernel $K\in \mathcal{H}_{\bar{A},m}$, then for any $0<\delta <\varepsilon <1$,
\begin{equation}
M^{\#}_{\delta}(T_{\vec b}f)(x)\leq C \sum_{j=0}^{m-1}\sum_{\sigma\in C^m_{j}}\|\vec{b}_{\sigma'}\|_{\rm BMO}M_{\varepsilon}(T_{\vec{b}_{\sigma}}f)(x)+C
\|\vec{b}\|_{\rm BMO}M_Bf(x).
\end{equation}
\end{lemma}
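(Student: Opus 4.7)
The plan is to fix $x\in\mathbb{R}^n$ and an arbitrary cube $Q\ni x$, set $\lambda_i=\langle b_i\rangle_{2Q}$ and $g_i=b_i-\lambda_i$, and estimate
$$\Big(\frac{1}{|Q|}\int_Q\big||T_{\vec b}f(z)|^\delta-|c|^\delta\big|\,dz\Big)^{1/\delta}$$
via the elementary inequality $\big||a|^\delta-|c|^\delta\big|\le|a-c|^\delta$ valid for $0<\delta<1$. The algebraic starting point is the identity
$$T_{\vec b}f(z)=\sum_{\sigma\subsetneq\{1,\ldots,m\}}(-1)^{m-|\sigma|+1}\prod_{i\in\sigma'}g_i(z)\,T_{\vec b_\sigma}f(z)+(-1)^m T\Big(\prod_{i=1}^m g_i\cdot f\Big)(z),$$
which follows by iterating the recursion $T_{\vec b_\tau}f=g_j T_{\vec b_{\tau\setminus\{j\}}}f-T_{\vec b_{\tau\setminus\{j\}}}(g_j f)$ and applying a M\"obius-type cancellation to the full $2^m$-expansion of $\prod(g_i(z)-g_i(y))$. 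Splitting $f=f_1+f_2$ with $f_1=f\chi_{2Q}$ and choosing the constant $c:=(-1)^m T(\prod g_i f_2)(x_0)$ for a fixed $x_0\in Q$, one obtains $T_{\vec b}f(z)-c=\mathrm{I}(z)+\mathrm{II}(z)+\mathrm{III}(z)$, corresponding to the commutator sum, the local piece $(-1)^m T(\prod g_i f_1)(z)$, and the non-local difference $(-1)^m[T(\prod g_i f_2)(z)-T(\prod g_i f_2)(x_0)]$ respectively.

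The three pieces are then bounded separately. For $\mathrm{I}$, I would apply H\"older's inequality with the exponent $\varepsilon/\delta$ on $|T_{\vec b_\sigma}f|^\delta$---producing the factor $M_\varepsilon(T_{\vec b_\sigma}f)(x)$---and sufficiently large conjugate exponents on each $|g_i|^\delta$, controlling the latter via the John--Nirenberg inequality by $\|b_i\|_{\rm BMO}$. This yields
$$\Big(\frac{1}{|Q|}\int_Q|\mathrm{I}|^\delta\,dz\Big)^{1/\delta}\lesssim\sum_{\sigma\subsetneq}\|\vec b_{\sigma'}\|_{\rm BMO}\,M_\varepsilon(T_{\vec b_\sigma}f)(x).$$
For $\mathrm{II}$, Kolmogorov's inequality (Lemma \ref{lem7.1}) reduces the $L^\delta$-average to the $L^{1,\infty}(Q,dz/|Q|)$ norm of $T(\prod g_i f_1)$, which by the weak $(1,1)$-boundedness of $T$ established in Lemma \ref{lem5.2} is dominated by $\frac{1}{|Q|}\int_{2Q}\prod|g_i|\,|f|\,dy$. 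The generalized H\"older inequality in Orlicz spaces (Lemma \ref{lem3.1}), together with the assumption $B^{-1}(t)(C^{-1}(t))^m\le cA^{-1}(t)$ and the duality identity $\bar C(t)\simeq e^{t}-1$, bounds this by $\prod\|g_i\|_{\exp L,2Q}\|f\|_{B,2Q}\lesssim\|\vec b\|_{\rm BMO}\,M_B f(x)$.

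The most delicate piece is $\mathrm{III}$. Writing the difference as the integral of $K(z,y)-K(x_0,y)$ against $\prod g_i\cdot f_2$ and decomposing $(2Q)^c$ into annuli $A_k=2^k(2Q)\setminus 2^{k-1}(2Q)$, I would apply the generalized H\"older inequality on each $A_k$, producing the $L^{\bar A}$-norm of the kernel difference over $2^k(2Q)$ together with $\|\prod g_i f\|_{A,2^k(2Q)}\lesssim\prod\|g_i\|_{\exp L,2^k(2Q)}\|f\|_{B,2^k(2Q)}$. Using $\|g_i\|_{\exp L,2^k(2Q)}\lesssim k\|b_i\|_{\rm BMO}$ and $\|f\|_{B,2^k(2Q)}\le M_B f(x)$, the summand acquires a polynomial factor $k^m$ that must be absorbed by the exponential weight $m^k$ built into the definition of $\mathcal H_{\bar A,m}$: since $k^m\le C_m\,m^k$ for all sufficiently large $k$ while the finitely many initial terms are harmless thanks to the size estimate $|K(x,y)|\le C_K|x-y|^{-n}$, the full sum is uniformly controlled by $\|\vec b\|_{\rm BMO}\,M_B f(x)$. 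The main obstacle of the argument is precisely this annular bookkeeping---matching the per-annulus BMO-logarithmic growth against the $m^k$ weight, while simultaneously verifying the required chain of Orlicz H\"older inequalities. Combining the three estimates and taking the supremum over all cubes $Q\ni x$ then yields the stated pointwise bound for $M^{\#}_\delta(T_{\vec b}f)(x)$.
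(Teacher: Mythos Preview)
Your proposal is correct and follows essentially the same strategy as the paper: the same algebraic expansion of $T_{\vec b}f$ into lower-order commutators plus $T(\prod g_i f)$, the same $f_1/f_2$ split with the same choice of subtracted constant, Kolmogorov plus weak $(1,1)$ for the local piece, H\"older with exponent $\varepsilon/\delta$ and John--Nirenberg for the commutator terms, and an annular decomposition for the non-local piece. The only noticeable difference is that the paper further splits the non-local term into two parts $II_1,II_2$ by writing $|b_i(z)-\langle b_i\rangle_{2Q}|\le|b_i(z)-\langle b_i\rangle_{2^{j+1}Q}|+|\langle b_i\rangle_{2^{j+1}Q}-\langle b_i\rangle_{2Q}|$, whereas you go directly with $\|g_i\|_{\exp L,2^k(2Q)}\lesssim k\|b_i\|_{\rm BMO}$; both routes produce the polynomial factor $k^m$ that is absorbed by the $\mathcal{H}_{\bar A,m}$ condition, so this is a cosmetic rather than structural variation. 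One small remark: for $m\ge 2$ the inequality $k^m\le C_m m^k$ holds for \emph{all} $k\ge 1$, so your appeal to the size condition for finitely many initial terms is not actually needed.
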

\begin{proof}
The case $b_i=b~(i=1,\ldots,m)$ was already proved in \cite{hor+lor2}. Fix $x\in \Rn$ and a cube $Q\ni x$ with center $x_Q$, set $Q^{*}=2Q$. Since $0<\delta<1$, then $\left||a|^{\delta}-|b|^{\delta}\right| \leq |a-b|^{\delta}$ for $a,b\in \mathbb R$, it is enough to show that 
\begin{equation}\label{ie1}
\Big(\frac{1}{|Q|}\int_{Q}|T_{\vec b}f(y)-c_{Q}|^{\delta}dy\Big)^{\frac{1}{\delta}}
\leq C \sum_{j=0}^{m-1}\sum_{\sigma\in C^m_{j}}\|\vec{b}_{\sigma'}\|_{\rm BMO}M_{\varepsilon}(T_{\vec{b}_{\sigma}}f)(x)+C
\|\vec{b}\|_{\rm BMO}M_Bf(x),
\end{equation}
where the constant $c_Q$ will be choosen later and $C$ is independent of $x$ and $Q$.
Applying \cite[Lemma 3.2]{hor+xue}, one may get
\begin{align*}
\prod_{j=1}^m(b_i(y)-b_i(z))
&=\prod_{i=1}^m(\langle b_i\rangle_{Q^{*}}-b_i(z))+\sum_{j=0}^{m-1}\sum_{\sigma\in C^m_{j}}(-1)^{\#\sigma'+1}\prod_{i\in \sigma}(b_i(y)-b_i(z))\\
&\quad\times\prod_{i\in\sigma'}(b_i(y)-\langle b_i\rangle_{Q^{*}}).
\end{align*}
Therefore,
\begin{align*}
|T_{\vec b}f(y)-c_{Q}|&\leq \Big|\int_{\Rn}\prod_{i=1}^m (\langle b_i\rangle_{Q^{*}}-b_i(z))K(y,z)f(z)dz-c_{Q}\Big|\\
&\quad+\sum_{j=0}^{m-1}\sum_{\sigma\in C^m_{j}}
\prod_{i\in\sigma'}|b_i(y)-\langle b_i\rangle_{Q^{*}}|\Big|\int_{\Rn}
\prod_{i\in \sigma}(b_i(y)-b_i(z))K(y,z)f(z)dz\Big|\\
&=\Big|T\Big(\prod_{i=1}^m(\langle b_i\rangle_{Q^{*}}-b_i)f\Big)(y)-c_Q\Big|+
\sum_{j=0}^{m-1}\sum_{\sigma\in C^m_{j}}\prod_{i\in{\sigma'}}|b_i(y)-\langle b_i\rangle_{Q^{*}}|
|T_{\vec{b}_{\sigma}}f(y)|.
\end{align*}
Now we split $f$ into $f=f_1+f_2$, where $f_1=f\chi_{Q^{*}}$ and $f_2=f\chi_{(Q^{*})^c}$, which implies that
\begin{equation}\label{ie2}
\begin{aligned}
{}&\Big(\frac{1}{|Q|}\int_{Q}|T_{\vec b}f(y)-c_{Q}|^{\delta}dy\Big)^{\frac{1}{\delta}}\\&
\lesssim\Big(\frac{1}{|Q|}\int_{Q}\Big|T\Big(\prod_{i=1}^m(\langle b_i\rangle_{Q^{*}}-b_i)f_1\Big)(y)\Big|
^{\delta}dy\Big)^{\frac{1}{\delta}}\\
&\quad+\Big(\frac{1}{|Q|}\int_{Q}\Big|T\Big(\prod_{i=1}^m(\langle b_i\rangle_{Q^{*}}-b_i)f_2\Big)(y)-c_{Q}\Big|
^{\delta}dy\Big)^{\frac{1}{\delta}}\\
&\quad+\sum_{j=0}^{m-1}\sum_{\sigma\in C^m_{j}}\Big(\frac{1}{|Q|}\int_{Q}\prod_{i\in{\sigma'}}|b_i(y)-\langle b_i\rangle_{Q^{*}}|^{\delta}
|T_{\vec{b}_{\sigma}}f(y)|^{\delta}dy\Big)^{\frac{1}{\delta}}\\&=:I+II+III.
\end{aligned}
\end{equation}
For $I$, the condition $\mathcal{H}_{\bar A}\subset \mathcal{H}_1$ implies that $T$ is of weak type $(1,1)$. Therefore, Lemma \ref{lem3.1} and Lemma \ref{lem7.1} yield that
\begin{equation}\label{ie3}
\begin{aligned}
I&\lesssim\Big\|T\Big(\prod_{i=1}^m(\langle b_i\rangle_{Q^{*}}-b_i)f_1\Big)\Big\|_{L^{1,\infty}(Q,\frac{dy}{|Q|})}\\
&\lesssim\frac{1}{|Q^{*}|}\int_{Q^{*}}\prod_{i=1}^m|\langle b_i\rangle_{Q^{*}}-b_i(y)||f(y)|dy\\
&\lesssim\prod_{i=1}^{m}\|\langle b_i\rangle_{Q^{*}}-b_i\|_{\exp L,Q^{*}}\|f\|_{B,Q^{*}}\\&\leq C\|\vec b\|_{\rm BMO}M_{B}f(x).
\end{aligned}
\end{equation}
Consider now the term $II$. Let $c_Q=T\big(\prod_{i=1}^m(\langle b_i\rangle_{Q^{*}}-b_i)f_2\big)(x_Q)$. Then it follows from the Jensen's inequality that

\begin{equation}\label{ie4}
\begin{aligned}
II&\lesssim \frac{1}{|Q|}\int_{Q}\Big|\int_{\Rn}
\prod_{i=1}^m(\langle b_i\rangle_{Q^{*}}-b_i(z))(K(y,z)-K(x_Q,z))
f_2(z)dz\Big|dy\\
&\lesssim \frac{1}{|Q|}\int_{Q}\sum_{j=1}^{\infty}\int_{2^{j+1}Q\backslash2^{j}Q}
\prod_{i=1}^m|\langle b_i\rangle_{Q^{*}}-b_i(z)||K(y,z)-K(x_Q,z)|f(z)|dzdy\\
&\lesssim \frac{1}{|Q|}\int_{Q}\sum_{j=1}^{\infty}\int_{2^{j+1}Q\backslash2^{j}Q}
\prod_{i=1}^m|\langle b_i\rangle_{Q^{*}}-\langle b_i\rangle_{2^{j+1}Q}||K(y,z)-K(x_Q,z)|f(z)|dzdy\\
&\quad+\frac{1}{|Q|}\int_{Q}\sum_{j=1}^{\infty}\int_{2^{j+1}Q\backslash2^{j}Q}
\prod_{i=1}^m|\langle b_i\rangle_{2^{j+1}Q}-b_i(z)||K(y,z)-K(x_Q,z)|f(z)|dzdy\\
&=:II_1+II_2.
\end{aligned}
\end{equation}
By the generalized H\"{o}lder’s inequality with $\bar{A},B$ and $K\in \mathcal{H}_{\bar{A},m}$, we have
\begin{equation}\label{ie5}
\begin{aligned}
II_1 &\lesssim \frac{1}{|Q|}\int_{Q}\sum_{j=1}^{\infty} \frac{|2^{j+1}Q|}{|2^{j+1}Q|}\int_{2^{j+1}Q\backslash2^{j}Q}\prod_{i=1}^m|\langle b_i\rangle_{Q^{*}}-\langle b_i\rangle_{2^{j+1}Q}||K(y,z)-K(x_Q,z)|f(z)|dzdy\\
& \lesssim \frac{1}{|Q|}\int_{Q}\|\vec b\|_{\rm BMO}\sum_{j=1}^{\infty}|2^{j+1}Q|j^m\Big\|(K(y,\cdot)-K(x_Q,\cdot))\chi_{2^{j+1}Q\backslash2^{j}Q}\Big\|_{\bar A,2^{j+1}Q}\|f\|_{B,2^{j+1}Q}dy\\
&\lesssim \|\vec b\|_{\rm BMO}M_{B}f(x),
\end{aligned}
\end{equation}
where the second inequality follows from the fact that $|\langle b_i\rangle_{2^{j+1}Q}-\langle b_i\rangle_{2Q}|\leq Cj\|b_i\|_{\rm BMO}$.\\
For $II_2$, using Lemma \ref{lem3.1}, it holds that
\begin{equation}\label{ie6}
\begin{aligned}
II_2&\lesssim \frac{1}{|Q|}\int_{Q}\sum_{j=1}^{\infty} \frac{|2^{j+1}Q|}{|2^{j+1}Q|}\int_{2^{j+1}Q\backslash2^{j}Q}\prod_{i=1}^m|\langle b_i\rangle_{2^{j+1}Q}-b_i(z)||K(y,z)-K(x_Q,z)|f(z)|dzdy\\
&\lesssim \frac{1}{|Q|}\int_{Q}\|\vec b\|_{\rm BMO}\sum_{j=1}^{\infty}|2^{j+1}Q|\Big\|(K(y,\cdot)-K(x_Q,\cdot))\chi_{2^{j+1}Q\backslash2^{j}Q}\Big\|_{\bar A,2^{j+1}Q}\|f\|_{B,2^{j+1}Q}dy\\
&\lesssim\|\vec b\|_{\rm BMO}M_{B}f(x).
\end{aligned}
\end{equation}
Now we turn to the last term $III$. For every $\sigma$, since $0<\delta<\varepsilon<1$, take $\delta_i,\,i\in \sigma'$ such that $\sum_{i\in \sigma'}\frac{1}{\delta_i}+
\frac{1}{\varepsilon}=\frac{1}{\delta}$, by H\"{o}lder's inequality, we obtain
\begin{equation*}
\begin{aligned}
III&\lesssim\sum_{j=1}^m\sum_{\sigma\in C^m_{j}}
\prod_{i\in\sigma'}\Big(\frac{1}{|Q^{*}|}\int_{Q^{*}}|\langle b_i\rangle_{Q^{*}}-b_i(y)|^{\delta_i}dy\Big)^{\frac{1}{\delta_i}}
\Big(\frac{1}{|Q|}\int_{Q}|T_{\vec b_{\sigma}}f(y)|^{\varepsilon}dy\Big)^{\frac{1}{\varepsilon}}\\
&\lesssim\sum_{j=1}^m\sum_{\sigma\in C^m_{j}}\|\vec{b}_{\sigma'}\|_{\rm BMO}M_{\varepsilon}(T_{\vec {b}_{\sigma}}f)(x).
\end{aligned}
\end{equation*}
This estimate, together with \eqref{ie2}-\eqref{ie6}, gives the desired conclusion.
\end{proof}
We are ready to demonstrate Theorem \ref{thm5.5}.
\begin{proof}[Proof of Theorem $\ref{thm5.5}$]
As shown in \cite{hor+per11}, we may assume that $\{b_i\}_{i=1}^m$ and the weight $w$ are all bounded functions. By the same scheme of that in \cite{hor+liyu}, we may obtain 
\begin{claim}\label{claim A} 
There exists some $\delta_0$ with $0< \delta_0 < 1$ such that for any finite subset $\sigma$ of $\{1,\ldots,m\}$ and any bounded functions $f$ with compact support, $\|M_{\delta_0}(T_{\vec{b}_{\sigma}}f)\|_{L^p(w)} <\infty$.
\end{claim}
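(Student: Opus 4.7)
The plan is to reduce the claim to an unweighted $L^r$ estimate for $T_{\vec b_\sigma}f$, and then transfer this to $\|M_{\delta_0}(T_{\vec b_\sigma}f)\|_{L^p(w)}$ by exploiting the standing reduction that $w$ is bounded (and, as is standard, compactly supported after a further truncation $w\mapsto \min(w,N)\chi_{B(0,N)}$ followed by a monotone convergence argument at the end of the proof of Theorem \ref{thm5.5}).

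First, I would use the boundedness of each $b_i$ to linearize $T_{\vec b_\sigma}$. Expanding the product $\prod_{i\in\sigma}(b_i(x)-b_i(y))$ gives the representation
$$
T_{\vec b_\sigma}f(x)=\sum_{\tau\subset\sigma}(-1)^{|\sigma\setminus\tau|}\Big(\prod_{i\in\tau}b_i(x)\Big)\,T\Big(\prod_{i\in\sigma\setminus\tau}b_i\,f\Big)(x).
$$
Each inner function $g_{\tau}:=\prod_{i\in\sigma\setminus\tau}b_i f$ is bounded with compact support, hence lies in $L^r$ for every $1\le r\le\infty$. Lemma \ref{lem5.2} guarantees that $T$ is bounded on $L^r$ for every $1<r<\infty$ under the $\bar A$-H\"ormander hypothesis, so $T(g_\tau)\in L^r$; multiplying by the bounded factor $\prod_{i\in\tau}b_i$ preserves $L^r$ membership, and thus $T_{\vec b_\sigma}f\in L^r$ for every $1<r<\infty$.

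Next, I would fix any $\delta_0\in(0,1)$. Jensen's inequality applied to the concave function $t\mapsto t^{\delta_0}$ gives the pointwise bound $M_{\delta_0}(g)\le M(g)$, which combined with the $L^r$-boundedness of $M$ for $r>1$ and the previous step yields $M_{\delta_0}(T_{\vec b_\sigma}f)\in L^r$ for every $r>1$. With $w$ bounded and compactly supported, H\"older's inequality then gives
$$
\|M_{\delta_0}(T_{\vec b_\sigma}f)\|_{L^p(w)}\le \|w\|_\infty^{1/p}\,|\supp w|^{1/p-1/r}\,\|M(T_{\vec b_\sigma}f)\|_{L^r}<\infty
$$
once we take $r>\max(p,1)$; this handles the full range $0<p<\infty$ uniformly.

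The argument is essentially soft: no delicate smoothness of $K$ or $A_\infty$ arithmetic is needed, only Lemma \ref{lem5.2} and the standing reductions. The main point that requires care, rather than a true obstacle, is the reduction to a compactly supported weight. One must check that the quantitative estimate that will ultimately be proved for Theorem \ref{thm5.5} on truncated weights is uniform in the truncation parameter $N$, so that one can send $N\to\infty$ by monotone convergence. The case $0<p<1$ is no harder than $p\ge 1$, since the H\"older step above uses only the integral representation of the $L^p(w)$ quasi-norm, and the decay $|T_{\vec b_\sigma}f(x)|\lesssim |x|^{-n}$ coming from the size condition on $K$ is never invoked—the compactness of $\supp w$ does all the work at infinity.
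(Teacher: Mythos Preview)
Your expansion of $T_{\vec b_\sigma}$ and the use of Lemma~\ref{lem5.2} to land in $L^r$ for every $r>1$ is exactly the standard route; the paper does not give its own argument here but defers to \cite{hor+liyu}, and what you wrote is essentially what one finds there. For $p>1$ your argument is complete: with $w$ merely bounded (as the paper already assumes), $M_{\delta_0}(T_{\vec b_\sigma}f)\in L^p(dx)$ once $\delta_0<p$, hence in $L^p(w)$ by $\|\cdot\|_{L^p(w)}\le\|w\|_\infty^{1/p}\|\cdot\|_{L^p}$. No compact support of $w$ is needed in this range.

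The gap is in the passage to $0<p\le1$. Your fix is to replace $w$ by $w_N=\min(w,N)\chi_{B(0,N)}$, prove the a priori bound for $w_N$, and take $N\to\infty$. The problem is that $w_N$ vanishes on a set of positive measure inside large cubes, so $w_N\notin A_\infty$; Lemma~\ref{lem2.4} therefore does not apply to $w_N$, and the constant $[w_N]_{A_\infty}^{m+1}$ that the proof of Theorem~\ref{thm5.5} would produce is meaningless. Thus the monotone-convergence step, which you flag as ``requiring care,'' actually breaks. Concretely, for $w\equiv 1$ and $p\le 1$ one has $M_{\delta_0}(T_{\vec b_\sigma}f)(x)\approx(1+|x|)^{-n}$, which is \emph{not} in $L^p(dx)$, so the Claim as literally stated can fail without some extra device.

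The usual repair is not to force $L^p(w)$-finiteness of $M_{\delta_0}$ directly, but to exploit the weaker input that the good-$\lambda$ proof of Lemma~\ref{lem2.4} really needs: it suffices that the superlevel sets $\{M_{\delta_0}(T_{\vec b_\sigma}f)>\lambda\}$ have finite $w$-measure for each $\lambda>0$, which does follow from your steps (they have finite Lebesgue measure since $M_{\delta_0}(T_{\vec b_\sigma}f)\in L^r$, and $w$ is bounded). One then runs the level-set integration in Lemma~\ref{lem2.4} on a truncated range $\lambda\in(\epsilon,\infty)$ and lets $\epsilon\downarrow0$; this avoids subtracting an infinite quantity. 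Alternatively, one first proves Theorem~\ref{thm5.5} at some fixed $p_0>1$ (where your argument is clean) and transfers the bound to $0<p\le 1$ afterwards, though one must then track the $[w]_{A_\infty}$ dependence carefully. Either way, the spatial truncation of $w$ should be dropped.
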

Combining Lemma \ref{lem2.4} with Claim \ref{claim A} and applying Lemma \ref{lem6.1}, it suffices to show that there exsits $\delta$ with $0<\delta<\delta_0 < 1$ such that 
\begin{equation}\label{ie7}
\|M^{\#}_{\delta}(T_{\vec b}f)\|_{L^p(w)}\leq C\|\vec b\|_{\rm BMO}[w]^{m}_{A_{\infty}}\|M_Bf\|_{L^p(w)}.
\end{equation}
Note that when $m=0$, then $T_{\vec b}=T$ and $K\in \mathcal{H}_{\bar A}$. Lemma \ref{lem6.1}, together with Lemma \ref{lem2.4}, gives that   
\begin{equation}\label{ie7.2}
\|M^{\#}_{\delta_0}(Tf)\|_{L^p(w)}\leq C\|M_Bf\|_{L^p(w)}.
\end{equation}
To confirm the validity of \eqref{ie7}, we proceed by employing mathematical induction. If $m=1$, for $0<\delta<\delta_0<1$, by Lemma \ref{lem2.4}, Lemma \ref{lem6.1} and \eqref{ie7.2}, we have
\begin{align*}
\|M^{\#}_{\delta_0}(T_{b}f)\|_{L^p(w)}&\leq C\|b\|_{\rm{BMO}}[w]_{A_{\infty}}\|M^{\#}_{\delta_0}(Tf)\|_{L^p(w)}+C\|b\|_{\rm BMO}\|M_Bf\|_{L^p(w)}\\
&\leq C\|b\|_{\rm BMO}[w]_{A_{\infty}}\|M_Bf\|_{L^p(w)}.
\end{align*}
Now we turn to the inductive step. Suppose that \eqref{ie7} holds for $1 \leq \#\sigma \leq m-1$, and let us prove that it also holds for $\#\sigma=m$. Notice that $B^{-1}(t)(C^{-1}(t))^j\leq cA^{-1}(t)$ and $K\in \mathcal{H}_{\bar{A},m}\subset\mathcal{H}_{\bar{A},j}$
with $1\leq j\leq m-1$, therefore, the induction hypothesis implies that for any $\sigma$ with $1\leq \#\sigma \leq m-1$, there exists $\delta(\sigma)$, with $0< \delta(\sigma)< \delta_0$ such that
\begin{equation}\label{ie8}
\|M^{\#}_{\delta(\sigma)}(T_{\vec{b}_{\sigma}}f)\|_{L^p(w)}\leq C\|\vec b_{\sigma}\|_{\rm BMO}[w]^{\#\sigma}_{A_{\infty}}\|M_Bf\|_{L^p(w)}.
\end{equation}
Choose $\delta$ satisfing $\delta <\tilde{\delta}:=\min\limits_{\sigma}\{\delta(\sigma): \sigma \subset \{1,\ldots,m\}\} <\delta_0$, it then follows from Lemma \ref{lem6.1}, Claim \ref{claim A} and \eqref{ie8} that
\begin{equation*}
\begin{aligned}
\|M^{\#}_{\delta}(T_{\vec{b}}f)\|_{L^p(w)}&\leq C
\sum_{j=0}^{m-1}\sum_{\sigma\in C^m_{j}}\|\vec{b}_{\sigma'}\|_{\rm BMO}\|M_{\tilde{\delta}}(T_{\vec{b}_{\sigma}}f)\|_{L^p(w)}+C
\|\vec{b}\|_{\rm BMO}\|M_Bf\|_{L^p(w)}\\
&\leq C [w]_{A_{\infty}}\sum_{j=0}^{m-1}\sum_{\sigma\in C^m_{j}}\|\vec{b}_{\sigma'}\|_{\rm BMO}\|M^{\#}_{\delta(\sigma)}(T_{\vec{b}_{\sigma}}f)\|_{L^p(w)}+C
\|\vec{b}\|_{\rm BMO}\|M_Bf\|_{L^p(w)}\\
&\leq C [w]_{A_{\infty}}\|\vec{b}\|_{\rm BMO}\sum_{j=0}^{m-1}\sum_{\sigma\in C^m_{j}}[w]^{\#\sigma}_{A_{\infty}}\|M_Bf\|_{L^p(w)}+C
\|\vec{b}\|_{\rm BMO}\|M_Bf\|_{L^p(w)}\\
&\leq C [w]^{m}_{A_{\infty}}\|\vec{b}\|_{\rm BMO}\|M_Bf\|_{L^p(w)}. 
\end{aligned}
\end{equation*}
Thus, \eqref{ie7} is proved by induction, which finishes the proof of Theorem \ref{thm5.5}.
\end{proof}

\section{applications}\label{sec8}
In this section, we present some applications of the results obtained in Theorem \ref{thm5.2} and Theorem \ref{thm5.4}. To be specific, we will give the Fefferman-Stein inequality with arbitrary weights and the quantitative weighted Coifman-Fefferman inequality for the general commutators of Calder\'{o}n commutators of Baj\v{s}anski-Coifman type, homogeneous singular operators, and Fourier multipliers.
\subsection{Calder\'{o}n commutators of Baj\v{s}anski-Coifman type}
It is well known that the study of Calder\'{o}n commutators is closely connected to the
Cauchy integral on Lipschitz curves and the elliptic boundary value problems. In 1967, Baj\v{s}anski and Coifman\cite{hor+baj} introduced a more general Calder\'{o}n commutator as follows. Given $k\in\mathbb N_{+}$ and $A$ satisfies Lipschitz condition \eqref{ie20}, define the singular operator $T_{A,k}$ as
\begin{equation*}
T_{A,k}f(x):=p.v.\int_{\Rn}\frac{1}{|x-y|^n}
\frac{P_{k}(A;x,y)}{|x-y|^k}f(y)dy,
\end{equation*}
where $$P_{k}(A;x,y):=A(x)-\sum\limits_{|\alpha|<k}\frac{A_{\alpha}(y)}{\alpha!}(x-y)^{\alpha},\,\,
A_{\alpha}(x)=\partial^{\alpha}_{x}A(x).$$

Assume that $A_{\alpha}\in L^{\infty}(\Rn)$ for each $|\alpha|=k$. If we denote the kernel of $T_{A,k}$ by
$$K_{A,k}(x,y):=\frac{1}{|x-y|^n}
\frac{P_{k}(A;x,y)}{|x-y|^k},$$
then it follows from \cite[p.1671]{hor+ding} that
$K_{A,k}$ is a standard Calder\'{o}n-Zygmund kernel. Furthermore, if $k$ is odd, then  $T_{A,k}$ is bounded on $L^2(\Rn)$ \cite[Theorem F]{hor+ding}. We may obtain the following results by Theorem 
\ref{thm5.3} and Theorem \ref{thm5.4}.

\begin{theorem}\label{thm8.1}
Let $m\in\mathbb{N}$ and $\vec{b}=(b_1,\ldots,b_m)\in \mathrm{BMO}^m$. Suppose that $k$ is odd and $A_{\alpha}\in L^{\infty}(\Rn)$ for each $|\alpha|=k$, then for any weight $w$ and every $0<\varepsilon<1$, we have
\begin{equation*}
w\big(\{x\in \mathbb{R}^n:|T_{A,k,\vec b}f(x)|>\lambda\}\big)
\lesssim \frac{1}{\varepsilon}\int_{\mathbb{R}^n}\Phi_m\Big(\frac{\big\|\vec{b}\big\|_{\mathrm {BMO}}|f(x)|}{\lambda}\Big)M_{L(\log L)^m(\log \log L)^{1+\varepsilon}}w(x)dx.
\end{equation*}
\end{theorem}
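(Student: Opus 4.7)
The plan is to deduce Theorem \ref{thm8.1} directly from Theorem \ref{thm5.3}. The observation driving the proof is that the right-hand side of the claimed inequality matches verbatim the conclusion of Theorem \ref{thm5.3}, so the argument reduces entirely to verifying that, under the hypotheses $k$ odd and $A_\alpha \in L^\infty(\mathbb{R}^n)$ for every $|\alpha|=k$, the operator $T_{A,k}$ falls in the class of $\omega$-Calderón-Zygmund operators whose modulus of continuity $\omega$ is Dini. Once this is done, applying Theorem \ref{thm5.3} to $T = T_{A,k}$ with the given $\vec b \in \mathrm{BMO}^m$ produces exactly the stated bound for $T_{A,k,\vec b}$.

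The first step I would carry out is to confirm the three defining properties of an $\omega$-Calderón-Zygmund operator. Linearity of $T_{A,k}$ is built into its definition, while its boundedness on $L^2(\mathbb{R}^n)$ for odd $k$ is precisely the content of Theorem F of \cite{hor+ding} cited in the excerpt; the integral representation
\[
T_{A,k}f(x)=\int_{\mathbb{R}^n}K_{A,k}(x,y)f(y)\,dy,\qquad x\notin \operatorname{supp} f,
\]
is immediate from the pointwise definition of $T_{A,k}$. Second, I would invoke the fact (also recorded in the excerpt, with reference to \cite{hor+ding}) that $K_{A,k}$ is a standard Calderón-Zygmund kernel. This in particular provides the size bound $|K_{A,k}(x,y)|\leq C|x-y|^{-n}$ together with the Hölder smoothness
\[
|K_{A,k}(x,y)-K_{A,k}(x',y)|+|K_{A,k}(y,x)-K_{A,k}(y,x')|\leq \frac{C|x-x'|^{\delta}}{|x-y|^{n+\delta}}
\]
whenever $|x-y|>2|x-x'|$, for some exponent $\delta\in(0,1)$ and a constant $C$ depending on $n$, $k$ and $\{\|A_\alpha\|_\infty\}_{|\alpha|=k}$.

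The Hölder estimate above corresponds to the modulus of continuity $\omega(t)=Ct^{\delta}$, which trivially satisfies the Dini condition:
\[
\int_0^1 \omega(t)\,\frac{dt}{t}=C\int_0^1 t^{\delta-1}\,dt=\frac{C}{\delta}<\infty.
\]
Hence $T_{A,k}$ is an $\omega$-Calderón-Zygmund operator with Dini-type modulus of continuity, and Theorem \ref{thm5.3} applies verbatim, yielding the desired weak-type Fefferman-Stein inequality for $T_{A,k,\vec b}$. The argument presents no genuine obstacle: all the analytic difficulty has already been absorbed into Theorem \ref{thm5.3} and into the standard Calderón-Zygmund kernel estimates for $K_{A,k}$ recorded in \cite{hor+ding}. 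The only point deserving a brief verification, rather than mere quotation, is the Hölder smoothness of $K_{A,k}$; this would be the main (but essentially routine) technical step if one wanted a self-contained treatment, proceeded via the Taylor-remainder form of $P_k(A;x,y)$ and the boundedness of $A_\alpha$.
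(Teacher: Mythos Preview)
Your proposal is correct and follows exactly the same approach as the paper: the paper simply records that $K_{A,k}$ is a standard Calder\'on--Zygmund kernel (citing \cite{hor+ding}) and that $T_{A,k}$ is $L^2$-bounded for odd $k$, then states that Theorem~\ref{thm8.1} follows from Theorem~\ref{thm5.3}. Your write-up in fact supplies slightly more detail than the paper does, by explicitly checking that $\omega(t)=Ct^\delta$ is Dini.
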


\begin{theorem}
Let $0<p<\infty$, $m\in\mathbb{N}$ and $b\in \mathrm{BMO}$. Assume $k,A$ satisfy the same conditions as Theorem \ref{thm8.1}, then for any $w\in A_{\infty}$,
\begin{equation*}
\int_{\Rn}|T^m_{A,k,b}f(x)|^pw(x)dx\lesssim \|b\|^{mp}_{\rm BMO}[w]^{mp}_{A_{\infty}}[w]^{\max\{1,p\}}_{A_{\infty}}\int_{\Rn}(M^{m+1}f(x))^pw(x)dx.
\end{equation*}
\end{theorem}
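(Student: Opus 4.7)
The plan is to invoke Theorem~\ref{thm5.4} directly, after identifying the correct Young functions and verifying that $T_{A,k}$ satisfies the requisite kernel hypotheses. Write the iterated commutator $T^m_{A,k,b}$ as the general commutator $T_{A,k,\vec{b}}$ with $\vec{b}=(b,\ldots,b)$ ($m$ copies), so that $\prod_{s=1}^{m}\|b_s\|_{\mathrm{BMO}}^{p}=\|b\|_{\mathrm{BMO}}^{mp}$. The essential observation, already recorded in the discussion leading to Theorem~\ref{thm8.1}, is that under the hypothesis that $k$ is odd and $A_{\alpha}\in L^{\infty}(\mathbb{R}^{n})$ for $|\alpha|=k$, the kernel $K_{A,k}$ is a standard Calder\'{o}n--Zygmund kernel and $T_{A,k}$ is bounded on $L^{2}(\mathbb{R}^{n})$ (see \cite[Theorem F]{hor+ding} and \cite[p.~1671]{hor+ding}).

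First I would select the Young functions $A_{0}(t)=t$ and $B(t)=t\log^{m}(e+t)$. A quick check yields $A_{0}\in\mathcal{Y}(1,1)$ (cf.\ example~(2) after Definition~\ref{def5.3}), and with $C(t)=e^{t}-1$ one has $C^{-1}(t)\simeq\log(1+t)$ and $B^{-1}(t)\simeq t/\log^{m}(e+t)$, so that
\begin{equation*}
B^{-1}(t)\bigl(C^{-1}(t)\bigr)^{m}\simeq t\cdot\frac{\log^{m}(1+t)}{\log^{m}(e+t)}\leq c\,t = cA_{0}^{-1}(t),\qquad t\geq 1,
\end{equation*}
which is precisely the coupling hypothesis required by Theorem~\ref{thm5.4}.

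Next I would verify that $T_{A,k}$ is an $\bar{A}_{0}$-H\"{o}rmander operator in the sense of Definition~\ref{def5.2}. Since $K_{A,k}$ is standard Calder\'{o}n--Zygmund, it automatically satisfies the size bound $|K_{A,k}(x,y)|\leq C|x-y|^{-n}$ and the classical Dini (indeed H\"{o}lder) regularity, hence the pointwise $L^{\infty}$-H\"{o}rmander condition of non-convolution type; this is the strongest among the scale, and because $\bar{A}_{0}$ is the complementary function of $A_{0}(t)=t$ (i.e.\ the degenerate Young function associated with $L^{\infty}$), one obtains $K_{A,k}\in\mathcal{H}_{\bar{A}_{0}}$ with a finite constant $H_{\bar{A}_{0}}$ controlled by the Calder\'{o}n--Zygmund smoothness constants of $K_{A,k}$. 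Combined with the $L^{2}$-boundedness from \cite[Theorem F]{hor+ding}, this makes $T_{A,k}$ an $\bar{A}_{0}$-H\"{o}rmander operator.

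Finally I would apply Theorem~\ref{thm5.4} with the above choices to get
\begin{equation*}
\int_{\mathbb{R}^{n}}|T^{m}_{A,k,b}f(x)|^{p}w(x)\,dx \lesssim \|b\|_{\mathrm{BMO}}^{mp}[w]_{A_{\infty}}^{mp}[w]_{A_{\infty}}^{\max\{1,p\}}\int_{\mathbb{R}^{n}}\bigl(M_{B}f(x)\bigr)^{p}w(x)\,dx,
\end{equation*}
and then use the well-known equivalence $M_{L(\log L)^{m}}f\simeq M^{m+1}f$ (recorded in example~(2) of Section~\ref{section2.5}) to replace $M_{B}f$ by $M^{m+1}f$, which yields the claim. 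The only point requiring care, and the mild obstacle, is the verification that $K_{A,k}$ genuinely satisfies the non-convolution $L^{\infty}$-H\"{o}rmander condition of Definition~\ref{def5.2} with an absolute constant (as opposed to merely the convolution-type condition in \eqref{ie100}); this reduces to the standard pointwise smoothness estimate for $K_{A,k}$ recalled in \cite[p.~1671]{hor+ding} and a routine summation of a geometric series in $k$.
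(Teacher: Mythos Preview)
Your proposal is correct and follows essentially the same route as the paper. The paper's proof is implicit: it records that $K_{A,k}$ is a standard Calder\'on--Zygmund kernel and that $T_{A,k}$ is $L^2$-bounded, then invokes Theorem~\ref{thm5.4} (equivalently, Corollary~\ref{cor5.4}) with exactly your choices $A_0(t)=t$, $B(t)=t\log^m(e+t)$ and the equivalence $M_{L(\log L)^m}\simeq M^{m+1}$.
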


\subsection{Homogeneous singular integral operators}We will apply our results to derive the weak type endpoint estimatesand the Coifman-Fefferman inequality of homogeneous integral singular operators.

For our purpose, we need to give some definitions. Assume that $\Omega$ is a function of homogeneous of degree zero, $\Omega\in L^{1}({\mathbb S}^{n-1})$ and satisfies the cancellation condition:
\begin{equation}\label{ie8.1}
\int_{\mathbb{S}^{n-1}}\Omega(\theta)d\theta=0,
\end{equation}
where $d\theta$ denotes the surface measure of $\mathbb{S}^{n-1}$. Set $K(x)=\Omega(x)/|x|^n$ and let $T$ be the 
singular integral operator of convolution type with the kernel $K$, that is, $T$ is bounded on $L^2(\Rn)$ and 
\begin{equation}\label{ie8.2}
T_{\Omega}f(x)=p.v.\int_{\Rn}K(x-y)f(y)dy=p.v.\int_{\Rn}\frac{\Omega(y)}{|y|^n}f(x-y)dy.
\end{equation}
For a Young function $A$, define the $L^A$-modulus of continuity of $\Omega$ as
$$\Omega_{A}(t)=\sup\limits_{|y|\leq t}\|\Omega(\cdot+y)-\Omega(y)\|_{A,\mathbb{S}^{n-1}}.$$
Notice that when $A(t)=t^q,\,1<q\leq \infty$, $\Omega_A$ is exactly $L^q$-modulus of continuity in the usual sense.
As was shown in \cite[Theorem 3.2]{iba}, if $\Omega_{A}$ satisfies
\begin{equation*}
\int_{0}^1\Omega_{A}(t)\frac{dt}{t}<\infty,
\end{equation*}
then $K\in H_{A}$. By Theorem \ref{thm5.2} and Theorem \ref{thm5.4}, one can obtain
\begin{theorem}
Suppose that $m,\vec{b},A_i(i=0,\ldots,m)$ satisfy the conditions in Theorem \ref{thm5.2}.
If $T_{\Omega}$ is the operator defined in \eqref{ie8.2} and $\Omega$ satisfies 
\begin{equation*}
\int_{0}^1\Omega_{\bar A_0}(t)\frac{dt}{t}<\infty,
\end{equation*}
then for any weight $w$ and every family of Young functions $\varphi_{\vec{l}}:=\varphi_{(l_1,l_2)},\,0\leq l_2 < l_1\leq m$, it holds that 
\begin{align*}
    w\big(\{x\in \mathbb{R}^n:|T_{\Omega,\vec b}f(x)|>\lambda\}\big)&\lesssim K_{\varphi_{(m,m)}}\int_{\mathbb{R}^n}A_m\Big(\frac{|f(x)|}{\lambda}\Big)M_{\varphi_{(m,m)}}w(x)dx\\
    &+\sum_{l_1=1}^m\sum_{l_2=0}^{l_1-1}K_{\varphi_{(l_1,l_2)}}
    \int_{\mathbb{R}^n}A_{m-l_1}\Big(\frac{|f(x)|}{\lambda}\Big)M_{\Phi_{l_1-l_2}\circ\varphi_{(l_1,l_2)}}w(x)dx,
\end{align*}
where $K_{\varphi_{(l_1,l_2)}}$ is given in \eqref{ie8.3}.
\end{theorem}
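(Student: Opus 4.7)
The plan is to reduce the statement directly to Theorem~\ref{thm5.2}. Since $T_\Omega$ is bounded on $L^2(\Rn)$ by hypothesis, the only nontrivial verification is that its convolution kernel $K(x) = \Omega(x)/|x|^n$ belongs to the class $\mathcal{H}_{\bar{A}_0}$ of Definition~\ref{def5.2}. Granting this, Theorem~\ref{thm5.2} applies verbatim to $T_\Omega$ with the given Young functions $A_0, A_1, \ldots, A_m$ and the family $\{\varphi_{(l_1,l_2)}\}_{0 \leq l_2 < l_1 \leq m}$, yielding the stated Fefferman--Stein inequality for $T_{\Omega,\vec b}$ with the constants $K_{\varphi_{(l_1,l_2)}}$ defined by \eqref{ie8.3}.

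The membership $K \in \mathcal{H}_{\bar{A}_0}$ is supplied by the result recalled in the excerpt immediately preceding the theorem (Theorem~3.2 of \cite{iba}): the Orlicz Dini condition $\int_0^1 \Omega_{\bar{A}_0}(t)\,dt/t < \infty$ implies that $K$ satisfies the convolution-type $L^{\bar{A}_0}$-H\"ormander condition. Because $K$ is translation invariant, the two quantities $H_{K,\bar{A}_0,1}$ and $H_{K,\bar{A}_0,2}$ from Definition~\ref{def5.2} coincide after the change of variables $(x,y)\mapsto(y,x)$, so both are finite simultaneously, which is exactly the non-convolution formulation. For completeness I would sketch the underlying mechanism: decompose the dyadic H\"ormander sum annulus by annulus; use the homogeneity of degree zero of $\Omega$ together with the spherical cancellation \eqref{ie8.1} to rewrite $K(\cdot - y) - K(\cdot)$ on $2^k r_Q \leq |x| \leq 2^{k+1} r_Q$ in terms of differences $\Omega(\theta + h_k) - \Omega(\theta)$ on $\mathbb{S}^{n-1}$ with $|h_k| \lesssim 2^{-k}$; then invoke the definition of $\Omega_{\bar{A}_0}$ on the sphere to bound each annular contribution by $\Omega_{\bar{A}_0}(2^{-k})$ up to harmless constants, and sum telescopically to recover the Dini integral.

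There is no serious obstacle in the argument, as the bulk of the work is already absorbed into the cited kernel estimate and the abstract theorem. The mildest technical point is to ensure that the pointwise size bound $|K(x)| \lesssim |x|^{-n}$ required in Definition~\ref{def5.2} is available; this follows from the integrability of $\Omega_{\bar{A}_0}$ near zero together with $\Omega \in L^{\bar{A}_0}(\mathbb{S}^{n-1})$, via a standard Dini-type argument that upgrades the Orlicz modulus of continuity to uniform boundedness of $\Omega$ on the sphere. Once this is in place, $T_\Omega$ is an $\bar{A}_0$-H\"ormander operator in the precise sense of Definition~\ref{def5.2}, and the desired inequality is immediate from Theorem~\ref{thm5.2}.
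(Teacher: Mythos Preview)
Your proposal is correct and matches the paper's approach exactly: the paper simply cites \cite[Theorem~3.2]{iba} to obtain $K\in\mathcal{H}_{\bar A_0}$ from the Orlicz Dini condition on $\Omega$ and then invokes Theorem~\ref{thm5.2} directly. One minor caution: your closing claim that the Dini condition upgrades $\Omega$ to $L^\infty(\mathbb{S}^{n-1})$ is not correct in general (an $L^r$-Dini condition with $r<\infty$ does not force boundedness), though the paper does not address the size bound in this application either, so this is a shared loose end rather than a defect in your reduction.
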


\begin{theorem}
Let $0<p<\infty$, $m\in\mathbb{N}$ and $\vec{b}=(b_1,\ldots,b_m)\in \mathrm{BMO}^m$. Assume that $A,B$ are Young functions such that $A\in \mathcal{Y}(p_0,p_1)~(1\leq p_0\leq p_1<\infty)$ and $B^{-1}(t)(C^{-1}(t))^{m}\leq cA^{-1}(t)$ for $t\geq 1$, where $C(t)=e^{t}-1$. If $T$ is the operator defined in \eqref{ie8.2} with $\Omega$ satisfying
$$\int_{0}^1\Omega_{\bar A}(t)\frac{dt}{t}<\infty,$$
then for any $w\in A_{\infty}$,
\begin{equation}\label{ie1.1}
\int_{\Rn}|T_{\Omega,\vec b}(f)(x)|^pw(x)dx\lesssim \prod_{s=1}^m\|b_s\|^p_{\rm BMO}[w]^{mp}_{A_{\infty}}[w]^{\max\{1,p\}}_{A_{\infty}}\int_{\Rn}(M_Bf(x))^pw(x)dx.
\end{equation}
\end{theorem}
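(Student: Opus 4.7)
The plan is to verify that the homogeneous operator $T_\Omega$ fits into the framework of Theorem \ref{thm5.4}, and then to invoke that theorem directly. Concretely, $T_\Omega$ is of convolution type with kernel $K(x,y)=\Omega(x-y)/|x-y|^n$, and by hypothesis it is bounded on $L^2(\mathbb{R}^n)$ (this is the definition in \eqref{ie8.2}), satisfies the representation \eqref{ie5.38}, and enjoys the standard size bound $|K(x,y)|\le C\|\Omega\|_{L^\infty}/|x-y|^n$ away from the diagonal (here $\Omega\in L^\infty$ is forced by the Dini-type smoothness assumption $\int_0^1\Omega_{\bar A}(t)\,dt/t<\infty$ together with $\Omega\in L^1(\mathbb{S}^{n-1})$, since this controls the oscillation of $\Omega$ and hence its essential supremum up to the average, which vanishes by \eqref{ie8.1}).

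Next, I would verify that $K\in\mathcal{H}_{\bar A}$ in the sense of Definition \ref{def5.2}. Since $K(x,y)=K(x-y)$ depends only on the difference, the two quantities $H_{K,\bar A,1}$ and $H_{K,\bar A,2}$ in Definition \ref{def5.2} coincide and reduce, after a translation, to the convolution-type $L^{\bar A}$-Hörmander expression \eqref{ie100}. The hypothesis $\int_0^1\Omega_{\bar A}(t)\,dt/t<\infty$ is exactly the assumption under which \cite[Theorem 3.2]{iba} yields $K\in H_{\bar A}$; applying that result therefore gives $H_{K,\bar A}<\infty$, so $T_\Omega$ is an $\bar A$-Hörmander operator in the sense defined after Definition \ref{def5.2}.

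With these ingredients in place, the conditions $A\in\mathcal{Y}(p_0,p_1)$ and $B^{-1}(t)(C^{-1}(t))^m\le cA^{-1}(t)$ assumed in the statement are precisely the remaining hypotheses of Theorem \ref{thm5.4}. Applying Theorem \ref{thm5.4} with $T=T_\Omega$ and the given Young functions $A,B$ immediately yields
\[
\int_{\mathbb{R}^n}|T_{\Omega,\vec b}(f)(x)|^p w(x)\,dx
\lesssim \prod_{s=1}^m\|b_s\|^p_{\mathrm{BMO}}\,[w]_{A_\infty}^{mp}[w]_{A_\infty}^{\max\{1,p\}}
\int_{\mathbb{R}^n}(M_B f(x))^p w(x)\,dx,
\]
which is the desired conclusion. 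Since every step is either a direct application of Theorem \ref{thm5.4} or a quotation of \cite[Theorem 3.2]{iba}, there is no genuine obstacle; the only subtle point is checking that \cite[Theorem 3.2]{iba}, stated for convolution kernels, gives membership in the non-convolution class $\mathcal{H}_{\bar A}$ of Definition \ref{def5.2}, but this reduction is immediate by translation invariance and so poses no real difficulty.
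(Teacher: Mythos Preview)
Your proposal is correct and follows exactly the paper's approach: the paper simply remarks that \cite[Theorem 3.2]{iba} gives $K\in\mathcal{H}_{\bar A}$ under the hypothesis $\int_0^1\Omega_{\bar A}(t)\,dt/t<\infty$, and then invokes Theorem \ref{thm5.4}. One small caveat: your argument that the Dini-type condition together with \eqref{ie8.1} forces $\Omega\in L^\infty$ (and hence the pointwise size bound) is not quite rigorous as stated, since control of the $L^{\bar A}$-modulus of continuity does not by itself yield an $L^\infty$ bound for general $\bar A$; the paper does not address this point either, so it is not a discrepancy with the paper's treatment.
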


\subsection{Fourier multipliers}
Let $h\in L^{\infty}(\Rn)$ and consider the muliplier operator $T$ defined a priori for functions $f$ in the Schwartz class by $\widehat{T_hf}(\xi)=h(\xi)\widehat{f}(\xi)$. Given $1<s\leq 2$ and $l$ a non-negative integer, we say that $h\in M(s,l)$ if for all $|\alpha|\leq l$,
$$\sup\limits_{R>0}R^{|\alpha|}\|\chi_{Q(0,2R)\setminus Q(0,R)}D^{\alpha}h\|_{L^s,Q(0,2R)}<\infty,$$
where $Q(0,R)$ is the cube of side $R$ centered at zero in $\Rn$.

In order to give the quantitative estimates of the above operators, we present the following lemma, which is a slightly weaker version of \cite[Proposition 6.2]{hor+lor2}.

\begin{lemma}\label{lem8.1}
Let $h\in M(s,l)$ with $1<s\leq 2$, $1\leq l\leq n$ and with $l>n/s$, then for all $1<r<(n/l)'$, we have that $K^{N}\in \mathcal{H}_{r}$ uniformly in $N$,
where $K_{N}$ is the certain truncations of the kernel and the detailed definition can be seen in \rm{\cite[p.1419]{hor+lor2}}.
\end{lemma}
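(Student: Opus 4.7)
The plan is to reduce the $L^r$-H\"ormander condition for $K^N$ to uniform estimates on its Littlewood--Paley pieces in frequency, derived from $h\in M(s,l)$ via Hausdorff--Young together with a weighted companion. Let $\{\varphi_j\}_{j\in\mathbb{Z}}$ be a smooth dyadic partition of unity on $\mathbb{R}^n\setminus\{0\}$ with $\supp\varphi_j\subset\{2^{j-1}\le|\xi|\le 2^{j+1}\}$ and $|D^\beta\varphi_j|\lesssim 2^{-j|\beta|}$, and set $h_j=h\varphi_j$, $K_j=\mathcal{F}^{-1}h_j$. First I would apply Leibniz together with the $M(s,l)$ hypothesis to obtain the scale-invariant bound $\|D^\alpha h_j\|_{L^s}\lesssim 2^{j(n/s-|\alpha|)}$ for $0\le|\alpha|\le l$. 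The truncation $K^N$ is handled by choosing a smooth cutoff whose derivatives respect the same scaling, so every inequality below will hold for $K^N$ with constants independent of $N$; for this reason I suppress the superscript.

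Since $1<s\le 2$, Hausdorff--Young applied to $h_j$ and, via the identity $x^\alpha K_j=(i)^{|\alpha|}\mathcal{F}^{-1}(D^\alpha h_j)$, to $D^\alpha h_j$, would deliver
$$\|K_j\|_{L^{s'}}\lesssim 2^{jn/s}\quad\text{and}\quad \bigl\||x|^l K_j\bigr\|_{L^{s'}}\lesssim 2^{j(n/s-l)},$$
while H\"older on the compact support of $h_j$ gives the $L^\infty$ companions $\|K_j\|_{L^\infty}\lesssim 2^{jn}$ and $\||x|^l K_j\|_{L^\infty}\lesssim 2^{j(n-l)}$. For a dyadic shell $\{|x|\sim 2^kR\}$, plain H\"older controls the case $r\le s'$, and for $r>s'$ one interpolates between $L^{s'}$ and $L^\infty$ on the shell; a direct evaluation of the exponent $(n/s-l)(s'/r)+(n-l)(1-s'/r)=n/r'-l$ unifies the two regimes into
$$\|K_j\|_{L^r(|x|\sim 2^kR)}\lesssim 2^{j(n/r'-l)}(2^kR)^{-l}\quad\text{whenever}\ 2^j(2^kR)>1,$$
while for $2^j(2^kR)\le 1$ one has the trivial $\|K_j\|_{L^r(|x|\sim 2^kR)}\lesssim 2^{jn}(2^kR)^{n/r}$. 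A parallel argument for $\nabla K_j=\mathcal{F}^{-1}(i\xi h_j)$ combined with the mean value theorem provides an extra factor $|y|\,2^j$ for $K_j(\cdot-y)-K_j(\cdot)$ whenever that factor is $\le 1$.

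Plugging these estimates into the H\"ormander sum $\sum_{k\ge 1}(2^kR)^{n/r'}\|K(\cdot-y)-K(\cdot)\|_{L^r(|x|\sim 2^kR)}$ and splitting the inner $j$-sum at the two thresholds $2^j\sim(2^kR)^{-1}$ and $2^j\sim|y|^{-1}$ (using smoothness below the second threshold and pointwise decay above it), each shell contributes a combination of $|y|/(2^kR)$ and $(|y|/(2^kR))^{l-n/r'}$. The assumption $R>c|y|$ forces $|y|/(2^kR)\le c^{-1}2^{-k}$, and the decisive hypothesis $r<(n/l)'$, equivalent to $l-n/r'>0$, is precisely what turns the second expression into a convergent geometric series in $k$; summing and noting that every constant is independent of $N$ yields the claim. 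The main obstacle I foresee is the careful two-dimensional bookkeeping in the $(j,k)$-sum, and in particular the $L^{s'}$–$L^\infty$ interpolation step required when $r>s'$, where Hausdorff--Young alone is insufficient; this is exactly where the sharp threshold $r<(n/l)'$ enters the argument through the critical exponent $l-n/r'$.
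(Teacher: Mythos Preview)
The paper does not give its own proof of this lemma: it is stated as ``a slightly weaker version of \cite[Proposition 6.2]{hor+lor2}'' and simply quoted from the literature, the original argument going back to Kurtz--Wheeden \cite{kur}. Your sketch is precisely that classical argument---dyadic decomposition of the multiplier, Hausdorff--Young plus the weighted companion via $x^\alpha K_j=\mathcal{F}^{-1}(i^{|\alpha|}D^\alpha h_j)$, and a double sum in $(j,k)$---so there is no methodological difference to report.

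Your outline is essentially correct, and the identification of $l-n/r'>0$ as the sharp threshold (equivalent to $r<(n/l)'$) is exactly right. One small imprecision: the claimed ``unification'' of the two regimes $r\le s'$ and $r>s'$ into the single shell bound $2^{j(n/r'-l)}(2^kR)^{-l}$ is not literally true. In the range $r\le s'$ the H\"older step on the shell leaves the $j$-exponent as $n/s-l$ rather than $n/r'-l$; what actually unifies is the quantity $(2^kR)^{n/r'}\|K_j\|_{L^r(|x|\sim 2^kR)}$, which in both regimes takes the form $(2^j\cdot 2^kR)^{-\delta}$ with $\delta=l-n/s>0$ when $r\le s'$ and $\delta=l-n/r'>0$ when $r>s'$. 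This does not affect the conclusion, only the bookkeeping you flagged as the main obstacle. The handling of the truncation $K^N$ by a smooth frequency cutoff is standard and your one-line dismissal is adequate.
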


Using Theorem \ref{thm5.2}, Corollaries \ref{cor5.6} and \ref{cor5.7}, combined with Lemma \ref{lem8.1} and a standard approximation argument as in \cite{kur}, we have

\begin{theorem}
Let $h\in M(s,l)$ with $1<s\leq 2$, $1\leq l\leq n$ and with $l>n/s$. Assume that $1<r<(n/l)'$ and $\vec{b}=(b_1,\ldots,b_m)\in {\rm BMO}^m$ with $\|b_i\|_{\mathrm{BMO}}=1,i=1,\ldots,m$. Then for any weight $w$ and $0<\varepsilon<1$,
\begin{align*}
    w\big(&\{x\in \mathbb{R}^n:|T_{h,\vec b}f(x)|>\lambda\}\big)\\
    &\lesssim \frac{1}{\varepsilon}\int_{\Rn}\Big(\frac{|f(x)|}{\lambda}\Big)^{r'}\Big(\log \big(e+\frac{|f(x)|}{\lambda}\big)\Big)^{mr'}M_{\psi}w(x)dx,
\end{align*}
where $\psi(t)=t\log^{2mr'+r'}(e+t)\log^{1+\varepsilon}(e+\log(e+t)).$
\end{theorem}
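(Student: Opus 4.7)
The proof invokes Theorem \ref{thm5.2} applied to the truncations $T_{h,N}$ of $T_h$, followed by a standard approximation argument as in \cite{kur}. First, by Lemma \ref{lem8.1}, the truncated kernels $K^N$ satisfy the $L^r$-H\"{o}rmander condition uniformly in $N$ whenever $1<r<(n/l)'$, which is exactly the $L^A$-H\"{o}rmander condition with $A(t)=t^r$. Setting $A_0(t)=t^{r'}$, so that $\bar{A}_0(t)\approx t^r$, it follows that $T_{h,N}$ is an $\bar{A}_0$-H\"{o}rmander operator and $A_0\in\mathcal{Y}(r',r')$ trivially.

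I next take $A_i(t)=t^{r'}\log^{ir'}(e+t)$ for $i=1,\dots,m$. Each $A_i$ is easily verified to be submultiplicative, and an induction on $i$ yields $\bar{A}_{i-1}^{-1}(t)\approx t^{1/r}\log^{i-1}(e+t)$; combined with $C^{-1}(t)\approx\log(e+t)$ and $A_i^{-1}(t)\approx t^{1/r'}\log^{-i}(e+t)$, the structural condition $A_i^{-1}(t)\bar{A}_{i-1}^{-1}(t)C^{-1}(t)\leq ct$ for $t\geq 1$ reduces to $1/r'+1/r=1$. Crucially, $A_m(|f|/\lambda)=(|f|/\lambda)^{r'}\log^{mr'}(e+|f|/\lambda)$ matches the expression on the right-hand side of the claimed inequality.

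For the auxiliary family in Theorem \ref{thm5.2}, I make the uniform choice
\[
\varphi_{(l_1,l_2)}(t)=t\log^{2mr'+r'-(l_1-l_2)}(e+t)\log^{1+\varepsilon}(e+\log(e+t)),\quad 0\leq l_2\leq l_1\leq m.
\]
A direct calculation gives $\Phi_{l_1-l_2}\circ\varphi_{(l_1,l_2)}(t)\approx\psi(t)$ for all admissible $(l_1,l_2)$ with $l_2<l_1$, and $\varphi_{(m,m)}(t)=\psi(t)$, so every maximal operator on the right-hand side of Theorem \ref{thm5.2} is comparable to $M_\psi$. Combined with the pointwise bound $A_{m-l_1}\leq A_m$, all the terms consolidate into the single expression $\int_{\Rn}A_m(|f|/\lambda)M_\psi w\,dx$.

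It remains to verify $K_{\varphi_{(l_1,l_2)}}\lesssim 1/\varepsilon$. Substituting the explicit forms of $\varphi^{-1}_{(l_1,l_2)}$ and $\Phi^{-1}_{l_1-l_2}$ into the integrals defining these constants, the exponent of $\log(e+t)$ in the integrand is $2(l_1-m)r'-l_1-2\leq-3$ in the off-diagonal case (since $l_1\geq 1$ and $l_1\leq m$) and $-(2m-1)r'-3\leq -4$ in the diagonal case $l_1=l_2=m$; the $\log(e+\log(e+t))$-factor is slowly varying. Thus each integral converges absolutely to an absolute $O(1)$ constant, which is $\leq 1/\varepsilon$ for $\varepsilon\in(0,1)$. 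The discrete sums $\beta_{m,l_1,l_2}$ involve only finitely many terms whose arguments are bounded independently of $\varepsilon$, and so are $O(1)$ as well. Finally, a standard approximation passes from $T_{h,N,\vec b}$ to $T_{h,\vec b}$, completing the proof.

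The principal technical difficulty is the careful bookkeeping of logarithmic exponents: one must simultaneously arrange $\Phi_{l_1-l_2}\circ\varphi_{(l_1,l_2)}\lesssim\psi$ (so that a single maximal operator $M_\psi$ emerges) and ensure that the integrals defining $K_{\varphi_{(l_1,l_2)}}$ converge uniformly in $\varepsilon$. The appearance of $r'$ inside the logarithmic powers of the $A_i$'s---absent in the $\omega$-Calder\'{o}n-Zygmund setting of Theorem \ref{thm5.3}---makes the balancing more delicate, since the $\log(e+\log(e+t))$-exponent in the integrand grows with $m-l_1$ and must be absorbed by a sufficiently large negative $\log(e+t)$-exponent.
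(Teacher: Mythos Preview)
Your proof is correct and follows exactly the route the paper indicates: apply Theorem~\ref{thm5.2} with $A_0(t)=t^{r'}$ and $A_i(t)=t^{r'}\log^{ir'}(e+t)$, invoke Lemma~\ref{lem8.1} to place the truncated kernels $K^N$ in $\mathcal{H}_r=\mathcal{H}_{\bar A_0}$ uniformly, choose the $\varphi_{(l_1,l_2)}$ so that every composition $\Phi_{l_1-l_2}\circ\varphi_{(l_1,l_2)}$ collapses to $\psi$, and pass to the limit as in \cite{kur}. The paper itself gives no details beyond the sentence ``Using Theorem~\ref{thm5.2}, Corollaries~\ref{cor5.6} and~\ref{cor5.7}, combined with Lemma~\ref{lem8.1} and a standard approximation argument,'' so your write-up is in fact more explicit than the original; your verification of the structural condition $A_i^{-1}\bar A_{i-1}^{-1}C^{-1}\le ct$ and your exponent bookkeeping for the $K_{\varphi_{(l_1,l_2)}}$ integrals are accurate.
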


\begin{theorem}
Let $h\in M(s,l)$ with $1<s\leq 2$, $1\leq l\leq n$ and with $l>n/s$. If $1<r<(n/l)'$, $0<p<\infty$, then for any $0<\varepsilon<1$ and $w\in A_{\infty}$, 
\begin{equation*}
\int_{\Rn}|T_{h,\vec b}(f)(x)|^pw(x)dx\lesssim \prod_{s=1}^m\|b_s\|^p_{\rm BMO}[w]^{mp}_{A_{\infty}}[w]^{\max\{1,p\}}_{A_{\infty}}\int_{\Rn}(M_{r'+\varepsilon}f(x))^pw(x)dx,
\end{equation*}
and
\begin{equation*}
\int_{\Rn}|T_h(f)(x)|^pw(x)dx\lesssim [w]^{\max\{1,p\}}_{A_{\infty}}\int_{\Rn}(M_{r'}f(x))^pw(x)dx.
\end{equation*}
Furthermore, if $n/l<r'\leq p< \infty$, then for $w\in A_{p/r'}$, $T$ is bounded on $L^p(w)$ and 
\begin{equation*}
\int_{\Rn}|T_h(f)(x)|^pw(x)dx\lesssim [w]^{p}_{A_{\infty}}[w]^{\frac{p}{p-r'}}_{A_{p/r'}}\int_{\Rn}|f(x)|^pw(x)dx.
\end{equation*}
\end{theorem}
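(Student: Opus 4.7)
The plan is to reduce the three displayed estimates to direct applications of Corollaries \ref{cor5.6} and \ref{cor5.7}, combined with a standard truncation/approximation argument in the spirit of Kurtz--Wheeden and, for the final inequality, Buckley's quantitative weighted bound for $M$.

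First, I would invoke Lemma \ref{lem8.1}, which asserts that the truncated kernels $K^N$ associated with the multiplier $h$ all belong to $\mathcal{H}_r$ with a uniform-in-$N$ bound $\sup_N H_{K^N, r} < \infty$. Each truncated operator $T_h^N$ is bounded on $L^2$ uniformly in $N$ by Plancherel (since $h\in L^\infty$), so $T_h^N$ fits the hypotheses of Corollaries \ref{cor5.6} and \ref{cor5.7} with constants independent of $N$. Applying Corollary \ref{cor5.6} to $T_{h,\vec b}^N$ and Corollary \ref{cor5.7} to $T_h^N$ yields, uniformly in $N$,
\begin{align*}
\int_{\Rn}|T_{h,\vec b}^N f(x)|^p w(x)\,dx &\lesssim \prod_{s=1}^m\|b_s\|^p_{\rm BMO}[w]^{mp}_{A_\infty}[w]^{\max\{1,p\}}_{A_\infty}\int_{\Rn}(M_{r'+\varepsilon}f(x))^p w(x)\,dx,\\
\int_{\Rn}|T_h^N f(x)|^p w(x)\,dx &\lesssim [w]^{\max\{1,p\}}_{A_\infty}\int_{\Rn}(M_{r'}f(x))^p w(x)\,dx.
\end{align*}
A standard approximation argument, exactly as carried out in \cite{kur}, then transfers these estimates from $T_h^N$ to $T_h$, yielding the first two conclusions of the theorem.

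For the third conclusion, I would couple the second estimate with Buckley's quantitative weighted bound for the Hardy--Littlewood maximal operator. Since $p/r' > 1$ and $w \in A_{p/r'}$, Buckley's theorem gives
\begin{equation*}
\|Mg\|_{L^{p/r'}(w)} \lesssim [w]_{A_{p/r'}}^{1/(p/r'-1)}\|g\|_{L^{p/r'}(w)} = [w]_{A_{p/r'}}^{r'/(p-r')}\|g\|_{L^{p/r'}(w)}.
\end{equation*}
Applying this with $g = |f|^{r'}$ and taking $r'$-th roots,
\begin{equation*}
\|M_{r'}f\|_{L^p(w)} = \|M(|f|^{r'})\|_{L^{p/r'}(w)}^{1/r'} \lesssim [w]_{A_{p/r'}}^{1/(p-r')}\|f\|_{L^p(w)}.
\end{equation*}
Raising to the $p$-th power and plugging the result into the second estimate (noting that $p \ge r' > 1$ forces $\max\{1,p\} = p$) gives the desired bound with the weight factor $[w]^p_{A_\infty}[w]^{p/(p-r')}_{A_{p/r'}}$. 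The $L^p(w)$-boundedness of $T_h$ is an immediate consequence.

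The main obstacle is the careful bookkeeping of the uniform-in-$N$ constants in the first step: one must verify that the constants $H_{K^N,\bar A}$ (equivalently $H_{K^N,r}$) appearing in the hypotheses of Corollaries \ref{cor5.6}--\ref{cor5.7}, as well as $\|T_h^N\|_{L^2 \to L^2}$, are all independent of the truncation parameter $N$. The former is precisely the content of Lemma \ref{lem8.1}; the latter follows from $\|h\|_{L^\infty}$ and a uniform bound on the Fourier symbols of the truncations. Once this uniformity is established, everything else reduces to already-proven quantitative sparse-domination estimates and Buckley's classical weighted inequality for $M$.
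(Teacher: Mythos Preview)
Your proposal is correct and matches the paper's approach: the paper itself merely writes that the theorem follows from Corollaries \ref{cor5.6} and \ref{cor5.7} combined with Lemma \ref{lem8.1} and a standard approximation argument as in \cite{kur}, and you have spelled out exactly these ingredients (plus the Buckley step implicit in the second part of Corollary \ref{cor5.7}). The only cosmetic difference is that you re-derive the third conclusion from the second via Buckley's bound for $M$, whereas the paper simply invokes Corollary \ref{cor5.7} directly, which already packages that step.
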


\vspace{0.2cm}

\end{document}